\newcommand{\norm}[2]{\|{#1}\|_{{#2}}}
\newcommand{\tnorm}[1]{\|\kern-.4mm| {#1} |\kern-.4mm\|}
\newcommand{\ud}{\,{\rm d}}
\newcommand{\jump}[1]{[\kern-0.6mm [{#1}]\kern-0.6mm]}
\newcommand{\tjump}[1]{\lfloor\kern-0.4mm{#1}\kern-0.4mm\rfloor}
\newcommand{\av}[1]{\{\kern-1.2mm \{{#1}\}\kern-1.2mm\}}
\newcommand{\dee}{{\rm d}}
\newcommand{\su}{\sum_{T \in \Th}}
\newcommand{\Ainner}[2]{(\!({#1},{#2})\!)_A}
\newcommand{\Ahinner}[2]{(\!({#1},{#2})\!)_{A,h}}
\newcommand{\inner}[3]{({#1},{#2})_{{#3}}}
\newcommand{\Th}{\mathcal{T}_h}
\newcommand{\gint}{\Gamma_{\rm int}}
\newcommand{\gE}{\Gamma_0}
\newcommand{\gif}{\Gamma_-}
\newcommand{\Div}{{\rm div}}
\newcommand{\ndg}[1]{|\kern-.03cm \|{#1}|\kern-.03cm \|}
\newcommand{\cinvT}{C_{\partial}}
\newcommand{\cinv}{C_{\nabla}}
\newcommand{\chp}{C_{hp}}
\newcommand{\R}{\mathbb{R}}
\newcommand{\normal}{{n}}
\newcommand{\jumpu}[1]{ \lfloor #1 \rfloor }
\newcommand{\avg }[1]{ \av{ #1}}
\newcommand{\tf}{{t_{\rm f}}}
\newcommand{\dt}{\,{\rm dt}}
\newcommand{\man}[1]{{\color{black}{#1}}}
\newcommand{\ns}[1]{| \kern -.25mm \|{#1}| \kern -.25mm \|_{\rm S}}
\newcommand{\xn}{x_{n_2}^{T}}
\newcommand{\utr}{W}
\newcommand{\barhmin}{\bar{h}_{\min}}
\newtheorem{theorem}{Theorem}[section]
\newtheorem{lemma}[theorem]{Lemma}
\newtheorem{proposition}[theorem]{Proposition}
\theoremstyle{definition}
\theoremstyle{remark}
\newtheorem{remark}[theorem]{Remark}
\numberwithin{equation}{section}
\begin{document}

\title[Asymptotic numerical hypocoercivity of space-time dG]{Asymptotic numerical hypocoercivity \\ of the space-time discontinuous Galerkin method  for  {the} Kolmogorov equation}

%    Only \author and \address are required; other information is
%    optional.  Remove any unused author tags.

%    author one information
% \author[short version for running head]{name for top of paper}
\author{Zhaonan Dong}
\address{(Z.~Dong)  Inria, 48 Rue Barrault, 75589, Paris, France and CERMICS, ENPC, Institut Polytechnique de Paris, CNRS,  6 \& 8 avenue B.~Pascal, 77455 Marne-la-Vall\'{e}e, France.}
\email{zhaonan.dong@inria.fr}
\thanks{}

%    author two information
\author{Emmanuil H. Georgoulis}
\address{(E.~H.~Georgoulis) The Maxwell Institute for Mathematical Sciences \& Department of Mathematics, Heriot-Watt University,   Edinburgh EH14 4AS, United Kingdom, {\sc and} Department of Mathematics, School of Applied Mathematical and Physical Sciences, National Technical University of Athens, Zografou 15780, Greece, {\sc and} IACM-FORTH, Greece.}
\email{E.Georgoulis@hw.ac.uk}
\thanks{E.~H.~Georgoulis and P.~J.~Herbert gratefully acknowledge the financial support of EPSRC (grant number EP/W005840/2)}

%    author third information
\author{Philip J. Herbert}
\address{(P.~J.~Herbert) Department of Mathematics, University of Sussex, Brighton BN1 9RF, United Kingdom.}
\email{P.Herbert@sussex.ac.uk}

%    \subjclass is required.
\subjclass[2020]{Primary 65N30, 78M10, 65N15}

\date{}

\dedicatory{}

%    Abstract is required.
\begin{abstract}
We are concerned with discretisations of the classical Kolmogorov equation by a standard space-time discontinuous Galerkin method.  The Kolmogorov equation serves as simple, yet rich enough in the present context, model problem for a wide range of kinetic-type equations: although it involves diffusion in one of the two spatial dimensions only, the combined nature of the first order transport/drift term and the degenerate diffusion are sufficient to `propagate dissipation' across the spatial domain in its entirety. This is a manifestation of the celebrated concept of hypocoercivity, a term coined and studied extensively by Villani in \cite{villani}.  We show that the  {classical} space-time discontinuous Galerkin method  {admits} a corresponding hypocoercivity property at the discrete level, asymptotically for large times.  To the best of our knowledge, this is the first result of this kind for any standard Galerkin scheme. This property is shown by proving one part of a discrete inf-sup-type stability result for the method in a family of norms dictated by a modified scalar product motivated by the theory in \cite{villani}. This family of norms contains the full gradient of the numerical solution, thereby allowing for a full spectral gap/Poincar\'e-type inequality at the discrete level, thus, showcasing a subtle, discretisation-parameter-dependent, numerical hypocoercivity property. Further, we show that the space-time discontinuous Galerkin method is inf-sup stable in the family of norms containing the full gradient of the numerical solution, which may be a result of independent interest.
\end{abstract}
%\keywords{Kolmogorov equation; hypocoercivity; numerical hypocoercivity, finite element methods;  discontinuous Galerkin; space-time methods.}

\maketitle

%    Text of article.

\section{Introduction}
In kinetic modelling, numerous degenerate evolution (integro-)differential equations model diffusion/dissipation along specific spatial directions only, without spanning the entirety of the spatial domain of definition of their solutions. Nonetheless, such models have been shown to exhibit convergence towards an equilibrium state over long time scales. Such models frequently emerge from integrating stochastic processes, often resulting into Kolmogorov/Fokker–Planck type partial differential equations (PDEs). A representative case includes the modelling of multi-particle systems whereby random collisions are permitted in the presence of external drift.

The term \emph{hypocoercivity}, introduced by Villani in \cite{villani},   {describes} the property of evolution (integro-differential) operators admitting dissipation also in spatial directions where no diffusion is explicitly present in the operator. Villani developed a framework for sufficient conditions for hypocoercivity in the celebrated work \cite{villani}, building upon earlier seminal works by H\'erau and Nier \cite{herau_nier}, Eckmann and Hairer \cite{jeckmann_hairer},  {H\'erau  \cite{herau2006}},  {Dolbeault, Mouhot and Schmeiser \cite{Dolbeault09}} Mouhot and Neumann \cite{mouhot_neumann} and others.  {We also mention the work by Talay \cite{Talay2002} as a key step in the development of these ideas.}
 A different framework for sufficient conditions for hypocoercivity was given by Dolbeault, Mouhot and Schmeiser in the seminal work \cite{DMS_TAMS}.

To fix ideas, in a Hilbertian setting with inner product $\langle \cdot, \cdot\rangle $ inducing a norm $\norm{\cdot}{}:=\sqrt{\langle \cdot, \cdot\rangle }$, we consider an evolution equation of the form
 \begin{equation}\label{PDE_intro}
 	u_t + \mathrm{L} u =0, \qquad\text{ whereby }\ \mathrm{L}:=\mathrm{L}_{skew} - \mathrm{L}_{diss},
 \end{equation}
 with $\mathrm{L}_{skew} $ corresponding to a skew-symmetric, typically ``transport-like'', operator and $\mathrm{L}_{diss}$  a degenerate, self-adjoint operator modelling diffusion/dissipation.  Due to the skew-symmetry of $\mathrm{L}_{ {skew}}$, a straightforward energy argument would imply
 \[
 \frac{1}{2}\frac{\ud}{\ud t}	\|u \|^2 =\langle  \mathrm{L}_{diss}u,u\rangle \le 0.
 \]
 The degenerate nature, however, of $\mathrm{L}_{diss}$ may \emph{not} be sufficient to provide a norm so that spectral gap/Poincar\'e-type inequality of the form
 \[
 \kappa \|u-\bar{u}\|^2\le -\langle  \mathrm{L}_{diss}u,u\rangle
 \]
 for some $\kappa>0$ holds, with $\bar{u}$ denoting an equilibrium state, or some kernel-eliminating functional. Thus, we \emph{cannot} immediately conclude
 \[
 \|u-\bar{u}\|\le{\rm e}^{-\kappa t_f^{}}\|u_0\|
 \]
 i.e., exponential decay to equilibrium, with $u_0$ denoting an initial state and $t_f$ a final time.

 However, such decay may, indeed, hold in certain settings, despite the presence of a non-trivial kernel for $ \langle  \mathrm{L}_{diss}u,u\rangle $, due to the degenerate nature of the dissipation. Such a state of affairs can occur, informally speaking, when $\mathrm{L}_{skew}$ is ``appropriately non-constant'' and is able to ``transport dissipation'' also in the directions where no diffusion is explicitly present, that is, the equation admits the hypocoercivity property.

 The proof of trend to equilibrium for the kinetic Fokker-Planck equation was given by H\'erau \& Nier in \cite{herau_nier} upon modifying the inner product used to include certain mixed derivatives. Related ideas have also been used by Eckmann \& Hairer in \cite{jeckmann_hairer} for the study of spectral properties of certain hypoelliptic operators involving degenerate diffusions, and by Mouhot \& Neumann \cite{mouhot_neumann} in the study of kinetic models with integral-type collision operators, among others.  Villani \cite{villani} and Dolbeault, Mouhot, \& Schmeiser \cite{DMS_TAMS} conceptualised the algebraic nature of the interplay between transport and dissipation properties of the spatial operator $\mathrm{L}$ and were able to provide abstract sufficient conditions for hypocoercivity to hold, that is,	$
 \|u-\bar{u}\|\le C{\rm e}^{-\kappa t_f^{}}\|u_0\|
 $,
 for some constant $C>0$. They achieved this by modifying the energy setting, i.e., by constructing appropriate inner products of the form $\langle \cdot,\cdot \rangle_{hc}:=\langle \cdot,\cdot \rangle + (\!( \cdot,\cdot )\!)$,  {with $(\!( \cdot,\cdot )\!)$ denoting an additional appropriately constructed symmetric bilinear form,} whose respective induced norms are strong enough to admit spectral gap/Poincar\'e-type inequality and, hence, decay can be inferred. The modified inner products $\langle \cdot,\cdot \rangle_{hc}$ are carefully constructed to reflect the interplay between  $\mathrm{L}_{skew}$ and $\mathrm{L}_{diss}$.

 {It is highly desirable to preserve hypocoercive structures upon discretisation} to guarantee that the resulting numerical schemes   {demonstrate} robust performance for long-time computations. However, the exploration of \emph{numerical hypocoercivity} properties remains relatively limited to date. Notable works in this area include \cite{porretta_zuazua}, which investigates hypocoercivity for a central difference discretisation of the Kolmogorov equation (see \eqref{eq:kolmogorov} below) within the abstract framework of \cite[Theorem 18]{villani}. Further, Dujardin, H\'erau, and Lafitte \cite{dujardin_herau_lafitte} extend this approach to examine the spectral gap for central difference discretisations of the Fokker-Planck equation. Foster, Loh\'eac, and Tran \cite{foster_loheac_tran} present a Lagrangian-type splitting method employing  a similarity transformation and linear finite elements on quasi-uniform meshes.  Bessemoulin-Chatard and Filbet \cite{besse_filbet} propose design principles for constructing equilibrium-preserving finite volume methods for nonlinear degenerate parabolic problems. Recently, Bessemoulin-Chatard, Herda, and Rey \cite{besse_FV} introduce an asymptotic-preserving finite volume scheme for one-dimensional kinetic equations in the diffusive limit, applying the hypocoercivity framework of Dolbeault, Mouhot, and Schmeiser \cite{DMS_TAMS}. For a finite-volume method of a one-dimensional nonlinear reaction kinetic model, we refer \cite{bessemoulin2024discrete}.

With the exception of \cite{foster_loheac_tran} whereby kinetic equations in two spatial dimensions are discussed, the above developments on numerical methods concern one-dimensional spatial problems and/or finite difference or finite volume methods. Developing Galerkin/finite element methods that preserve hypocoercivity structures remains a largely unexplored area, despite the advantages these methods offer, such as high-order discretisations over unstructured or locally adapted meshes.

The first work in this context is \cite{georgoulis}, whereby a hypocoercivity-preserving family of spatially discrete non-conforming finite element methods of arbitrary order for the Kolmogorov equation
\begin{equation}\label{eq:kolmogorov}
 u_t+ L u = 0,\quad \text{in }(0,t_f]\times \man{\Omega};
\end{equation}
 {with $L u :=  - u_{xx}+ x u_y$}, for some final time $t_f>0$, with $\Omega\subset\mathbb{R}^2$, along with appropriate initial and boundary data, is constructed and analysed, following the framework in \cite{villani}. Recently, a hypocoercivity-exploiting fully-discrete, stabilised space-time finite element method  is presented and analysed in \cite{DonGeoHer24}, which is proven to admit ``numerical hypocoercivity'' in an asymptotically exact sense. In both \cite{georgoulis,  DonGeoHer24}, Kolmogorov equation serves as a simple model problem characterised by diffusion in one of the two spatial directions only, and, moreover, it admits a hypocoercivity-inducing algebraic structure in the spirit of  \cite{villani}.

 Due to their relevance in setting the context of the present work, we give a conceptual overview of the developments in \cite{georgoulis,  DonGeoHer24}.
	 {Upon defining a} symmetric and positive semidefinite matrix $A\in \mathbb{R}^{2\times 2}$,  {(see \eqref{eq:A_def} below for details,)} we denote by $\Ainner{\cdot}{\cdot}$ the inner product
$
	\Ainner{w}{v}:=(w,v)+ (\nabla w,A\nabla v ),
	$
	for $w,v\in H^1(\Omega)$, with induced norm $\norm{\cdot}{A}:= \sqrt{\Ainner{\cdot}{\cdot}}$; here $(\cdot,\cdot)_{\omega}$ corresponds to the standard $L_2(\omega)$ inner product for $\omega\subset \mathbb{R}^d$, $d\in\mathbb{N}$, along with respective induced norm $\|\cdot \|_\omega$ and we drop the subscript when $\omega=\Omega$, for brevity. Hence, assuming sufficient regularity, \eqref{eq:kolmogorov} implies
		\begin{equation}\label{eq:weak_hypo}
		\Ainner{u_t}{v}+ \Ainner{L u}{v}		=0,
	\end{equation} for all $v$ in  {a} suitable test space.  {Roughly speaking, i.e., ignoring non-trivial boundary terms and regularity considerations, \eqref{eq:weak_hypo} can be viewed as arising from testing \eqref{eq:kolmogorov} against $v-\nabla\cdot A\nabla v$.}
Imposing, now, homogeneous boundary conditions on the appropriate part of $\partial\Omega$,  the latter gives
	\begin{equation}\label{eq:weak_bilinear}
		\Ainner{u_t}{v} + a(u,v) =0, \quad\text{with }\ \ 	a(u,v):=(u_x,v_x)+(xu_y,v)+(\nabla L u, A\nabla v).
	\end{equation}
	Integration by parts  and elementary operations eventually give (see Section \ref{sec:weak_form} below for details):
		\begin{equation}\label{eq:hypoco3old}
			a(u,u)
		 \ge   c \Big(  \|u_x\|^2
			+
			 \|u_y\|^2
			+ \norm{\sqrt{A}\nabla u_x}{}^2			+(\frac{1}{2}xn_2\nabla u-n_1\nabla u_{x}, A\nabla u)_{\partial\Omega}\Big).
	\end{equation}

	If the  {entries} of $A$ are  taken to be fixed suitable numbers, as done in  \cite{georgoulis}, and additional boundary conditions are to treat the last term on the right-hand side of the above lower bound, we deduce
	$
		a(u,u)\ge   c_0 \norm{u}{A}^2
	$,
for some $c_0>0$, independent of $u$. The latter, in turn, implies
		\[
\frac{1}{2}	\frac{\ud }{\ud t } \norm{u}{A}^2+  c_0 \norm{u}{A}^2\le 0,
	\]
	i.e., exponential decay as $t_f\to\infty$. (We note that, in the present setting of a bounded spatial domain, decay results from the additional boundary conditions.) Further, in  \cite{georgoulis} a  {$C^0$-interior penalty} spatially-discrete (non-conforming) finite element method with continuous elements is designed and corresponding decay is proven, thereby yielding the first, to the best of our knowledge, Galerkin method admitting a ``numerical hypocoercivity'' property. The respective spatial bilinear form $a(\cdot,\cdot)$ corresponds to a fourth order operator; this remains the case upon Galerkin discretisation  {if the entries of the matrix $A$ are selected independent of the discretisation parameters}; we refer to \cite{georgoulis} for details.

	Starting again from \eqref{eq:weak_bilinear}, a stabilised non-conforming finite element method for the fully-discrete Kolmogorov problem (without additional boundary conditions) was recently developed in \cite{DonGeoHer24}, combining the classical streamline upwind Petrov-Galerkin (SUPG) point of view with the above modified hypocoercivity-inducing inner product to arrive at a non-standard spatial discretisation. The method is completed by a discontinuous Galerkin time-stepping scheme, to arrive at a fully-discrete space-time Galerkin method of arbitrary order. To ensure that the spatially discrete operator is spectrally equivalent to a second order discrete operator, the entries of the matrix $A$ in \cite{DonGeoHer24} are scaled by the local mesh-size and polynomial degrees of the Galerkin spaces.  For this method, the asymptotically exact numerical hypocoercivity with mesh and polynomial degree dependent exponential decay rate is proven  {and} a complete a priori error analysis is given \cite{DonGeoHer24}.

In this work, we demonstrate that the  {classical} space–time discontinuous Galerkin (dG) method— {namely, the symmetric interior penalty dG method in the $x$-variable combined with an upwind dG treatment in the $y$- and $t$-variables}, applied to \eqref{eq:kolmogorov} preserves an asymptotic numerical hypocoercivity structure.  In stark contrast to \cite{georgoulis,  DonGeoHer24}, the asymptotic numerical hypocoercivity is shown through the establishment of one side of an inf-sup condition on a sufficiently strong norm. This special norm is related to the SUPG-hypocoercive norm from \cite{DonGeoHer24}. We stress that the result holds for  the classical space-time dG scheme \cite{hss}, i.e., \emph{without} the presence of additional $A$-inner-product-induced Petrov-Galerkin-type terms, as is the case is \cite{georgoulis,  DonGeoHer24};  cf., \eqref{eq:weak_hypo}. To the best of our knowledge, this is the first demonstration of (asymptotic) numerical hypocoercivity for a well-known/classical Galerkin method.

 The classical, fully-discrete discontinuous Galerkin (dG) method for advection or advection-diffusion PDEs is known to be inf-sup stable in an  SUPG-like norm \cite{johnson_pitkaranta, ErnGuermoond06, ayuso_marini, MR2257111, cdgh_book, MR3672375} involving control in the advection direction. A key idea of the present developments is the observation that a term of the form $V+\tau (V_t+xV_y)-\nabla\cdot A\nabla V$ lies in the same discontinuous polynomial space as $V$ itself, for $\tau$ and $A$ element-wise constant functions. This observation eventually allows for coercivity in a norm strictly stronger than the SUPG-like norms for which dG is known to be inf-sup stable. For the fully-discrete space-time dG method, we also complete the inf-sup stability in the new `enhanced' norm.  The new norm is strong enough to counteract the absence of  spectral gap/Poincar\'e inequality of the standard $L_2$-energy argument and, thereby, to conclude asymptotic numerical hypocoercivity for the classical space-time dG scheme. To the best of our knowledge, the latter constitutes a stronger stability result to what is available in the literature to date for the space-time dG method for the Kolmogorov initial/boundary value problem. The method of proof is generic and is expected to be applicable to other related equations also.

The remainder of this work is structured as follows. In Section \ref{sec:weak_form}, upon providing some necessary notation, we discuss in more detail the hypocoercivity mechanism for the model problem of the Kolmogorov equation. In Section \ref{sec:method}, we present its discretisation by the space(-time) discontinuous Galerkin method. In Section \ref{sec:semi-discrete}, we present one part of the crucial inf-sup condition which, in turn, allows to conclude asymptotic numerical hypocoercivity of the spatially discrete method. In Section~\ref{sec:fullDiscrete}, we extend the analysis to the fully-discrete space--time dG method. In particular, we establish the full inf--sup stability result and derive corresponding \emph{a priori} error bounds for sufficiently regular solutions. In Section~\ref{sec: numerical experiments}, we present a series of numerical experiments confirming the theoretical developments.
Finally, in Section \ref{sec: conclusion and extension}, we draw some conclusions and give some potential extensions.

\subsection*{Notation}
We abbreviate the $L_2(\omega)$-inner product and $L_2(\omega)$-norm for a Lebesgue-measurable subset $\omega\subset \mathbb{R}^d$ as $(\cdot,\cdot)_{\omega}$ and $\norm{\cdot}{\omega}$, respectively.
Moreover, when $\omega=\Omega$, we will further compress the notation to $(\cdot,\cdot)\equiv (\cdot,\cdot)_{\Omega}$ and $\norm{\cdot}{}\equiv \norm{\cdot}{\Omega}$.
The standard notation $H^k(\omega)$ for Hilbertian Sobolev spaces, $k\in\mathbb{R}$ will be used.
In addition, given an interval $J\subset \mathbb{R}$ and a Banach space $V$, we use standard notation  {for} Bochner spaces, denoted by $L_p(J; V)$.

\section{Hypocoercive structure} \label{sec:weak_form}

Closing a degenerate parabolic PDE with suitable boundary conditions is well understood via the classical theory of linear second order equations with non-negative characteristic form \cite{fichera,or73}.
Assuming that $\partial \Omega$ is polygonal and denoting the unit outward normal at each point of $\partial \Omega$ (whenever well defined) by $\mathbf{n}:=(n_1,n_2)^T$ at almost every point of $\partial\Omega$, we first define the elliptic portion of the boundary
\[
	\gE:=\{(x,y)\in \partial \Omega: n_1(x,y)\ne 0\}.
\]
We further subdivide $\partial \Omega\backslash\gE$, into inflow and outflow boundaries
\[
	\gif:= \{(x,y) \in \partial\Omega\backslash\gE: xn_2(x,y )<0\}, \quad \Gamma_+:= \{\xi \in  \partial\Omega\backslash\gE: xn_2(x,y)   \ge  0\},
\]
respectively, so that
 $\partial \Omega = \gE\cup\gif\cup\Gamma_+$.
Introducing the time interval $I : = (0, t_f]$, for a final time $t_f>0$,  and the notation $ L:= -\partial^2_{x}+x\partial_y$, the initial/boundary-value Kolmogorov problem we consider is given by:
	\begin{equation}\label{eq:kolmogorov-bounded}
	\begin{aligned}
u_t+Lu =&\ f, &\text{in } I\times\Omega,\\
u=&\ u_0, &\text{on } \{0\}\times\Omega,\\
u=&\ 0, &\text{on } I\times\gif,\\
n_1 u_x=&\ 0, &\text{on } I\times \gE,
\end{aligned}
\end{equation}
for $f\in L_2(I; L_2(\Omega))$, and $u_0\in L_2(\Omega)$.
The well-posedness of the above problem is assured upon assuming that $\gif$ has positive one-dimensional Hausdorff measure \cite{or73}. We note  that different boundary conditions  {on} $I\times \gE$ are admissible and give  {results}  to what is proven below under minor modifications only; we do not consider these explicitly for brevity.

\subsection*{The matrix $A$}

Throughout this work $A$ will be the matrix
\begin{equation}\label{eq:A_def}
 A : =      \begin{pmatrix}
     \alpha & \beta \\ \beta & \gamma
    \end{pmatrix}
\end{equation}
where the values of $\alpha$, $\beta$, and $\gamma$ will be given as
\begin{equation}\label{eq:alpha_beta_gamma_def}
    \alpha =
    \frac{1}{8\delta},
    \quad
    \beta =
    \frac{1}{48 \delta^2},
    \quad
    \gamma =
    \frac{10}{48^2 \delta ^3}
\end{equation}
where $\delta > 0$ is a parameter  {depending on the discretization parameters}, which will be chosen later and will vary (slightly) between the semi- and fully-discrete problems.
With this choice of $\alpha$, $\beta$, and $\gamma$, we see that $A$ is positive definite with $\det(A) = (9216 \delta^4)^{-1}$.
The choice of parameters given in \eqref{eq:alpha_beta_gamma_def} is by no means unique, as long as the analysis presented below remains true.  We opt to provide a concrete choice of $A$, rather than conditions on the entries $\alpha$, $\beta$, and $\gamma$, in an effort to streamline the analysis below.
In \cite{georgoulis}, the values of $\alpha,\beta,\gamma$ are taken to be fixed numbers, whereas, in \cite{DonGeoHer24}, $\delta$ is selected in a mesh and polynomial degree dependent fashion, as we shall also do so below. With this matrix $A$, we denote by $\Ainner{\cdot}{\cdot}$ the inner product
\[
	\Ainner{w}{v}:=(w,v)+ (\nabla w,A\nabla v ),
\]
for $w,v\in H^1(\Omega)$ with induced inner product $\norm{\cdot}{A}:= \sqrt{\Ainner{\cdot}{\cdot}}$.

\subsection*{Hypocoercivity}
Consider a test function $v\in H^1_{-}(\Omega):=\{v\in H^1(\Omega):v|_{\partial_{-}\Omega}=0\}$. Assume that the exact solution, $u$, of \eqref{eq:kolmogorov-bounded} is sufficiently regular, so that the following calculations are valid.
Upon differentiation of \eqref{eq:kolmogorov-bounded} and testing against $A\nabla v$, and subsequent addition to \eqref{eq:kolmogorov-bounded} tested against $v$ and integration by parts, the problem can be written in the non-standard weak form: find $u\in H^1_{-}(\Omega)$, such that
\begin{equation}\label{modified_weak_form}
	\Ainner{u_t}{v} + a_{\rm hc}(u,v) = \Ainner{f}{v},
\end{equation}
whereby
\begin{equation}\label{eq:spatial_part}
	a_{\rm hc}(u,v):=(u_x,v_x)+(xu_y,v)+(\nabla L u, A\nabla v).
	\end{equation}
	 {Again, momentarily assuming that $u$ is regular enough, specifically $u\in H^1_-(\Omega)\cap H^{2.5+\epsilon}(\Omega) $, so that  $(\frac{1}{2}xn_2\nabla u-n_1\nabla u_{x}, A\nabla u)_{\partial\Omega}$ is well defined, we evaluate $ {a_{\rm hc}(u,u)}$ and perform integrations by parts, to deduce}
\begin{equation}\label{eq:hypoco3}
	\begin{aligned}
		a_{\rm hc}(u,u)=&\ \norm{u_x}{}^2+\frac{1}{2}\norm{\sqrt{xn_2}u}{\Gamma_+}^2+\norm{\sqrt{A}\nabla u_x}{}^2+\beta\norm{u_y}{}^2+\alpha(u_y, u_x)\\
		&+(\frac{1}{2}xn_2\nabla u-n_1\nabla u_{x}, A\nabla u)_{\partial\Omega};
	\end{aligned}
\end{equation}
 {the detailed calculation is given in \cite{georgoulis}. Moreover, in }\cite{georgoulis}, additional boundary conditions are considered, so that the unsigned boundary term on right-hand side of \eqref{eq:hypoco3} is non-negative and, thus, can be omitted.  {In this work, we do \emph{not} assume any such condition and} we will make $A$ small, so that the boundary terms may be absorbed in the positive terms  {at the discrete level}, as done also in \cite{DonGeoHer24}.

Returning to the continuous problem, for the sake of highlighting the `mechanics' of hypocoercivity, we assume \emph{momentarily} that $(\frac{1}{2}xn_2\nabla u-n_1\nabla u_{x}, A\nabla u)_{\partial\Omega}\ge 0$.  This can be achieved, for instance, if $\nabla u=\mathbf{0}$ on $\partial\Omega$; we refer to \cite{georgoulis} for other possibilities. Now, applying Young's and Cauchy-Schwarz' inequalities, and upon discarding  non-negative terms, we deduce
\begin{equation}\label{eq:hypoco}
    a_{\rm hc}(u,u)\ge
    \frac{1}{4}\|u_x\|^2
    +
    \frac{1}{64 \delta^2} \|u_y\|^2,
\end{equation}
by the choices of $\alpha$ and $\beta$ made in \eqref{eq:alpha_beta_gamma_def}.
It then follows that there is $c>0$, depending on $\delta$ such that $a_{\rm hc}(u,u) \geq c\norm{u}{A}^2$, thus one has
\begin{equation}
\frac{1}{2}	\frac{\ud }{\ud t } \norm{u}{A}^2+ c \norm{u}{A}^2 \leq \Ainner{f}{u},
\end{equation}
which allows for stability estimates that do not depend exponentially on the final time $t_f$. In other words, using the non-standard test function $v-\nabla\cdot (A\nabla v)$ for well-chosen $A$, results in $L$ providing coercivity also with respect to $\|u_y\|$.
This is a manifestation of the concept of hypocoercivity: the nature of the trasport operator $x\partial_y$ is such that it can ``disseminate'' the available dissipation  of the problem, $-\partial_{x}^2$, in all spatial directions. Villani showed in \cite{villani} that sufficient conditions for a degenerate operator admitting such hypocoercivity property can be of purely algebraic nature and, in particular, they concern the study of linear spaces spanned by certain commutators between the transport and the dissipation parts of the operator. In the particular case of \eqref{eq:kolmogorov}, since $[x\partial_y,\partial_x]=\partial_y$, we can expect hypocoercive structure since $\partial_x$ and $\partial_y$ correspond to directional derivatives along the axes and, thus, they span $\Omega$; we refer to \cite[Theorem 24]{villani} for details.

\section{Discrete spaces and problem formulation} \label{sec:method}
\subsection{Space discretisation}
For the rest of this work, $\Omega$ is a Lipschitz polygonal domain. We introduce a family of subdivisions $\mathcal{T}_H=\{\Th\}_{h\in H}$, for some index set $H$,  in $\Omega$, consisting of mutually disjoint open triangular or box-type elements $T\in \Th $, whose closures cover $\bar{\Omega}$ exactly.
Also, let $h\colon\cup_{T\in\Th } T\to\mathbb{R}_+$ be the local meshsize function defined elementwise by $h|_T:=h_T:={\rm diam}(T)$.
Each $\Th $ is assumed to be shape-regular, in the sense that the radius $\rho_T$ of the largest inscribed circle of each $T\in\Th $ is bounded from below with respect to each element's diameter $h_T$, uniformly in the index set $H$.
Also, we assume that each $\Th $ is locally quasi-uniform in the sense that the diameters of adjacent elements are uniformly bounded from above and below in the index set $H$. Finally, for each $\Th $, let $\Gamma:=\cup_{T\in\Th }\partial T$ denote the mesh skeleton, and define
$
    \gint : = \{x \in \Gamma : x \not \in \partial \Omega\},
$
the interior part of the mesh skeleton.

For each element  $T\in \Th $, we define its inflow and outflow boundaries,
\[
\partial_- T := \{ (x,y) \in \partial T :  x  n_2(x,y) <0 \} \mbox{, }\quad  \partial_+ T := \{ (x,y) \in \partial T :  x n_2(x,y)  \geq 0 \},
\]
respectively.
Given $T\in \Th $, the trace of a function $w \in H^1(T)$ on $\partial_- T$, relative to $T$, is denoted by $w_{T}^+$. Further, if $\partial_- T \backslash    \gif \neq \emptyset$, then for $x\in \partial_- T\backslash    \gif$, there exists a unique $T^\prime \in \Th $ such that $x\in \partial_+ T^{\prime}$; with this notation, we denote by $w_{T}^-$ the trace of $w|_{T^\prime}$ on $\partial_- T \backslash \gif$. Hence the \emph{upwind jump} of the (scalar-valued) function $w$ across a face $F\in \partial_{-}T \backslash \gif$ is denoted by
$$
\jumpu{w}: = w^+_T - w^-_T.
$$
We define the \emph{jump} $\jump{w}$ of $w$ across the common element interface $e:=\partial T^i\cap \partial T^j$ of two adjacent elements $T^i,T^j$, by $\jump{w}|_e:=w|_{T^i}-w|_{T^j}$, when $i>j$, for the indices; correspondingly, we also define the \emph{average} $\av{w}|_e:=(w|_{T^i}+w|_{T^j})/2$ of $w$ across $e$.
In addition, for boundary faces $e\subset \partial T\cap \partial\Omega$, we set $\jump{w}|_e:=w|_{T}$ and $\av{w}|_e:=w|_{T}$, respectively. The above jump and average definitions are trivially extended to vector-valued functions by component-wise application.

We  {denote the \emph{broken Sobolev spaces}, subordinate to a mesh $\Th$ by
$
H^r(\Omega,\Th ):=\{v|_T\in H^r(T), T\in\Th \}.
$}
%with respective norm  {$\norm{w}{H^r(\Omega,\Th )}$}; see, e.g., . %:=\Big(\sum_{T\in\Th }\norm{\nabla^r w}{T}^2+ \sum_{k=1}^r\norm{\av{h}^{r- k-1/2}\jump{\nabla^{k-1} w}}{\Gamma_{\rm int}}^2\Big)^{1/2},\quad  r\ge 1.\]
Further, we define the broken gradient $\nabla_h $ to be the element-wise gradient operator with $\nabla_hw|_T=\nabla w|_T$, $T\in\Th $, for $w\in H^1(\Omega,\Th )$. Similarly, we define the broken divergence operator $\Div_h$ to be the element-wise divergence operator.

We consider the family of discontinuous element-wise polynomial spaces subordinate to $\Th $, defined by
\[
 {\mathcal{V}}_h\equiv  {\mathcal{V}}_h^p:=\{V\in L_2(\Omega): V|_T\in \mathbb{P}_p(T),\ T\in\Th \},
\]
with $\mathbb{P}_p(\omega)$, $\omega\subset\mathbb{R}^d$, denoting the space of polynomials of total degree at most $p$ over $\omega$, $p\in\mathbb{N}$.

We will use the standard $hp$-version inverse inequalities (see, e.g., \cite{schwab}):
\begin{equation}\label{trace_H1_inv}
\|V\|_{\partial T} \leq \frac{\cinvT p}{ \sqrt{h_T}}  \|V\|_{ T},
 \quad\text{and}\quad
\|\nabla V\|_{T} \leq  \frac{\cinv p^2}{h_T}  \|V\|_{ T},
\end{equation}
which hold of for all $V\in\mathbb{P}_p(K)$, $p\in\mathbb{N}$, $T\in\Th $,
with the constants $\cinvT>0$ and $\cinv>0$, depending only on the space dimension $d$ and  the mesh shape-regularity constant $C_\rho>0$, defined as the global constant, such that $h_{T}/h_{T'}\le C_{\rho}$,
for all $T,T'\in\Th$ sharing a $(d-1)$-face, and for all $\{\Th\}_{h\in H}$.

For spatial discretisation, we select the \emph{classical} (upwind) discontinuous Galerkin method of Houston, Schwab and S\"uli \cite{hss} for equations with non-negative characteristic form. For the particular case of the Kolmogorov equation, the dG bilinear form from from \cite{hss} discretising $Lu$ reads:
\begin{equation}\label{def: spatial bilinear form}
\begin{split}
    a_{\rm dG}(U,V):=&
    \sum_{T \in \Th} \left( \inner{U_x}{V_x}{T} + \inner{x U_y}{V}{T}\right)\\
  &
  -
    \inner{\av{\normal_1 U_x}}{\jump{V}}{\gint }
    -
    \inner{\av{\normal_1 V_x}}{\jump{U}}{\gint }
    +
    \inner{\sigma \jump{U}}{\jump{V}}{\gint}
    \\
    &-
    \sum_{T \in \Th} \left(
    \inner{x\normal_2 \jumpu{U}}{V^+}{\partial_- T \cap \gint}
    +
    \inner{x\normal_2 U^+ }{ V^+ }{\partial_- T \cap \gif} \right),
\end{split}\end{equation}
where $\sigma\colon \gint \to \R$ is a penalty term whose precise expression is to be provided later. So, the spatially discrete method reads: find $U\in H^1(I; {\mathcal{V}}_h)$, such that
\begin{equation}\label{def: seim-discrete bilinear form}
    B_{\rm s}(U,V)
    :=
    \int_0^{\tf}  \left(\inner{U_t}{V}{} +  a_{\rm dG}(U,V) \right) \dt
    =
    \int_0^\tf
    \inner{f}{V}{}=:   \ell_{\rm s}(V)
    \end{equation}
for all $V\in  {\mathcal{V}}_h$, and $U(t_0): = \pi u_0$, with $\pi\colon L_2(\Omega) \rightarrow  {\mathcal{V}}_h$ denoting, the orthogonal $L_2$-projection onto $ {\mathcal{V}}_h$.

\subsection{Time discretisation}

We subdivide $I=(0,t_f]$, into the family $\mathcal{I} := \{ I_n : n = 1,\ldots, N\}$, where $I_n := (t_{n-1}, t_n]$ for a strictly increasing sequence $0 = t_0 < t_1 < \ldots < t_N = t_f$, and let $k\colon I\to\mathbb{R}_+$, with $k|_{I_n} := k_n = t_n - t_{n-1}$ denote each timestep.
Further, we define a family of space-time (discontinuous) finite element spaces subordinate to $\mathcal{I}$ and $\Th $ by
\begin{equation*}
	 {\mathcal{V}}_{h,k}\equiv	 {\mathcal{V}}_{h,k}^{p,q} := \{ V \in L_2(I;  {\mathcal{V}}_h^p) : V|_{I_n} \in \mathbb{P}_q(I_n), n=1,\dots, N\}
\end{equation*}
for $q \in\mathbb{N}$. We denote by $\tjump{V}_{n}$ the time-jump across $t_n$, viz.,
\[
	\tjump{V}_n := V(t_n^+) - V(t_n^-) := \lim_{\epsilon \to 0^+} V(t_n+\epsilon) - \lim_{\epsilon \to 0^+} V(t_n-\epsilon)\quad\text{for } n=1,\dots,N.
\]

For later reference, we recall  {the standard} one-dimensional inverse estimates, see, e.g., \cite{schwab}: for $V\in \mathbb{P}_q(I_n)$, $n=1,\dots,N$, we have
\begin{equation}\label{time_trace_inv}
|V(t_{n}^-)| \leq (q+1)k_n^{-\frac{1}{2}}  \norm{V}{ I_n} \quad\text{ and }\quad \norm{V'}{I_n}\leq \sqrt{12}(q+1)^2k_n^{-1}  \|V\|_{ I_n}.
\end{equation}

\subsubsection{Fully-discrete scheme}
Starting from \eqref{def: seim-discrete bilinear form}, we further discretise the $t$-variable, resulting to the classical space-time dG method:  find $U \in  {\mathcal{V}}_{h,k}$ such that
\begin{equation}\label{eq:dG_FEM}
	B_{\rm st}(U,V)
	=
\ell_{\rm st}(V),\quad\text{ for all }\ V \in  {\mathcal{V}}_{h,k},
\end{equation}
 with
\begin{equation}\label{eq:space-time bilinear form}
  B_{\rm st}(U,V)
	:=
	\sum_{n=1}^N \int_{I_n}\!\! \big( \inner{U_t}{V}{} + a_{\rm dG}(U,V) \big) \dt + \sum_{n=2}^{N} \inner{\tjump{U}_{n-1}}{V( t_{n-1}^+)}{}
+	\inner{U(t_0^+)}{V(t_0^+)}{},
\end{equation}
and
\[
    \ell_{\rm st}(V)
    :=
    \sum_{n=1}^N \int_{I_n}
    \inner{f}{V}{}
    \dt
    +(U(t_0^-),V(t_0^+)),
    \]
where it is again convenient to choose $U(t_0^-) := \pi u_0$,  {as it can be implemented efficiently by inverting a block-diagonal mass matrix.} We note carefully that \eqref{eq:dG_FEM} is, again, the \emph{classical} upwind discontinuous Galerkin method of Houston, Schwab and S\"uli \cite{hss} for equations with non-negative characteristic form in three variables, (i.e., inclusive of $t$,) upon selecting a three-dimensional space-time mesh as above.

Of course, due to the ``space-time slab''-like selection of the mesh  {and the choice of the test function}, in practice \eqref{eq:dG_FEM} is typically solved on each time-step, viz.,
\[
        \int_{I_n}\! \! \big( \inner{U_t}{V}{} + a_{\rm dG}(U,V) \big)\! \dt +\inner{U(t_{n-1}^+)}{V( t_{n-1}^+)}{}
        =\! \int_{I_n}\! \inner{f}{V}{} \! \dt+\inner{U(t_{n-1}^-)}{V( t_{n-1}^+)}{},
\]
for all $V\in  {\mathcal{V}}_{h,k}$, $n=1,\dots, N$. Later, we shall make use of the following notation, for $n=1,\dots,N$:
\begin{equation}\label{B_n}
 B_n(U,V):= \int_{I_n}\! \! \big( \inner{U_t}{V}{} + a_{\rm dG}(U,V) \big)\! \dt +\inner{\tjump{U}_{n-1}}{V( t_{n-1}^+)}{}.
\end{equation}

The well-posedness of the fully-discrete problem follows from its coercivity; see \cite{hss}, or Theorem \ref{thm:discreteEquilibriation} below, for details.

\section{Asymptotic numerical hypocoercivity of the semi-discrete scheme}\label{sec:semi-discrete}
The strategy of the proof is to first demonstrate positivity in a stronger, hypo\-coercivity-inducing norm for the spatially discrete method \eqref{def: seim-discrete bilinear form}  by choosing optimal test functions of the form
\begin{equation}\label{semi: test function}
V: = U + \su\tau_T (x U_y + U_t)|_T + \su \big(-\Div (A\nabla U)|_T \big)\equiv V_1+ V_2 +V_3,
\end{equation}
with appropriate choices of $\tau$ defined in \eqref{def: tau}, and $\delta$ for $A$, defined in \eqref{def:delta_def} below, respectively.
\begin{remark}
A trivial, yet crucial aspect in the proof, is that $V$ is, indeed, admissible test function, since $V\in C^0(I; {\mathcal{V}}_h)$,  due to the nature of the first order transport term $x U_y + U_t$  which is element-wise a polynomial of \emph{total} order $p$ and, of course, due to the use of discontinuous approximation space. This is in sharp contrast with previous works \cite{georgoulis,DonGeoHer24}, whereby the spatial discretisation employs  standard, continuous, Lagrange elements, thereby requiring additional non-standard Petrov-Galerkin-type terms in the bilinear form, to ensure coercivity in a hypocoercivity-inducing norm.
\end{remark}

For the spatially discrete setting of this section, we set $\sigma:\gint\to\mathbb{R}$, defined face-wise by
\begin{equation}\label{eq:sigma_def}
    \sigma|_F :  = 64N_{\partial} (\cinvT n_1^Fp)^2 \max_{i=1,2} h_{T_i}^{-1}, \quad \text{for } F = \partial T_1 \cap \partial T_2,\ T_1,T_2 \in \Th,
\end{equation}
with $n_1^\omega:=\norm{n_1}{L_\infty(\omega)}$, for any $\omega\subset\gint$, and $N_\partial$ denoting the number of faces per element.  Also, we define $\tau:\Omega\to\mathbb{R}$ element-wise by
\begin{equation}\label{def: tau}
\begin{split}
    \tau|_{T}^{}=\tau_T^{}
    := &
    \frac{h_T^2}{p^4}\Big( \frac{65\cinv^2}{6}+\frac{3\sigma_T^2 h_T^2}{64p^4}+\frac{(8\xn\cinvT)^2 h_T}{3p^2}\Big)^{-1},
        \quad \text{for } T\in \Th,
    \end{split}
\end{equation}
with $\sigma_T: = \max_{F\in \partial T}{\sigma_F}$ and $\xn:=\norm{{xn_2}}{L_\infty(\partial_-T)}$, for notational brevity. We note that $\tau \sim h^2/p^4$ as  $h/p\to 0$, with $\sim$ denoting proportionality. In addition, we set
\begin{equation}\label{def:delta_def}\begin{split} \delta_T:=&\ \max\bigg\{1,\  \frac{\cinvT^2p^4}{h_T^2}\max\Big\{4+\frac{\cinv^2}{8
    	\cinvT^2},\frac{(\xn)^2h_T}{p^2},\sqrt{\mathcal{R}_T}\Big\}\bigg\}, \quad \text{for all $T\in \Th$}.
    \end{split}
\end{equation}
whereby
$\mathcal{R}_T:=2h_T^2 p^{-4} \tau_T^{-1}+1040N_\partial (n_1^FC_\rho\cinvT^{-1})^2+ 8(\cinv	\cinvT^{-1}\xn)^2p^{-2} h_T
$.
 Note that $\delta\ge 1$, which implies that $\alpha, \beta,\gamma<1$, and, also, we have
$\delta \sim p^4/h^2$,  as $h/p\to 0$

For later use we define the dG-seminorm:
\begin{equation}\label{dG:norm space}
\begin{split}
\norm{U}{\rm dG} &:=  \Big( \sum_{T \in \Th}\norm{U_x}{T}^2
+\frac{1}{2}\norm{U}{\rm uw}^2
    +
  \norm{\sqrt{\sigma} \jump{U}}{\gint }^2\Big)^{\frac{1}{2}},
\end{split}
\end{equation}
with the upwind terms collected in
\[
	\begin{split}
		\norm{U}{\rm uw} &:=  \Big( \sum_{T \in \Th}
		\big(\norm{ \sqrt{|x\normal_2|} U}{ (\partial_- T \cap \gif )\cup( \partial_+ T \cap \Gamma )}^2
+
		\norm{ \sqrt{|x\normal_2|} \jumpu{U}}{ \partial_- T \cap \gint }^2
	\big)\Big)^{\frac{1}{2}}.
	\end{split}
\]
The dG bilinear form $a_{\rm dG}(\cdot,\cdot)$ is coercive with respect to $\norm{\cdot}{\rm dG}$, see \cite{hss} or Lemma \ref{lem:semi:U} below for a proof. Crucially, however, $\norm{\cdot}{\rm dG}$ may fail to be a norm in $ {\mathcal{V}}_h$. Thus, the standard stability analysis in \cite{hss} does not apply.

The idea of the analysis below is to enhance $\norm{\cdot}{\rm dG}$ with additional terms to result into a norm in $ {\mathcal{V}}_h$, which will be shown to satisfy a discretisation parameter-dependent Poincar\'e inequality for functions in $ {\mathcal{V}}_h$ and, thus, we will eventually conclude the asymptotic numerical hypocoercivity property announced above. The proof will be split to a number of intermediate results, showing stability against each $V_i$, $i=1,2,3$, cf., \eqref{semi: test function} above, before collecting the bounds and completing the argument.
\begin{lemma}\label{lem:semi:U}
    Let $U \in H^1(I; {\mathcal{V}}_h)$ and let $\sigma$ as in \eqref{eq:sigma_def}. Then, a.e.~in $t$, we have
\[
   \inner{U_t}{V_1}{} + a_{\rm dG}(U,V_1)
        \geq
   \frac{1}{2}\frac{\dee}{\dt} \norm{U}{}^2
        +
      \frac{7}{8}\norm{U}{\rm dG}^2
         +\frac{1}{16}\norm{U}{\rm uw}^2.
\]
\end{lemma}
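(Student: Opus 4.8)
The plan is to test the semi-discrete equation against $V_1 = U$ itself and show that the left-hand side reproduces the coercivity estimate of the classical dG scheme of \cite{hss}, with a slightly reduced constant (here $7/8$ instead of $1$), the small loss being absorbed into the extra $\tfrac{1}{16}\norm{U}{\rm uw}^2$ term that is made explicit on the right. First, I would handle the time-derivative term: since $U$ is continuous in $t$, $\inner{U_t}{U}{} = \tfrac{1}{2}\tfrac{\dee}{\dt}\norm{U}{}^2$ pointwise a.e., which accounts for the first term on the right-hand side. It then remains to bound $a_{\rm dG}(U,U)$ from below.

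The core step is the standard integration-by-parts/coercivity computation for $a_{\rm dG}(\cdot,\cdot)$. Summing $\inner{U_x}{U_x}{T}$ gives $\sum_T\norm{U_x}{T}^2$ directly. For the advective part, $\sum_T \inner{xU_y}{U}{T}$ together with the interior upwind terms $-\sum_T\inner{x\normal_2\jumpu{U}}{U^+}{\partial_-T\cap\gint}$ and the inflow term $-\sum_T\inner{x\normal_2 U^+}{U^+}{\partial_-T\cap\gif}$: integrating $\inner{xU_y}{U}{T} = \tfrac12\inner{xU_y}{U}{T} + \tfrac12(\ldots)$ by parts on each $T$ produces the boundary contributions $\tfrac12\int_{\partial T}x\normal_2 |U|^2$, and the algebraic identity $a^+a^+ - 2\jumpu{a}a^+ = (a^-)^2\cdot 0 + \ldots$ — more precisely $|U^+|^2 - 2\jumpu{U}U^+ = |U^-|^2 - |\jumpu{U}|^2$ on interior inflow faces — lets one reassemble everything into the upwind seminorm $\tfrac12\norm{U}{\rm uw}^2$ (the inflow/outflow boundary squares and the interior jump squares), with the telescoping across neighbouring elements leaving no uncontrolled terms because $\Omega$ is covered exactly and $x\normal_2$ changes sign precisely across $\partial_\pm$. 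The symmetric face terms $-2\inner{\av{\normal_1 U_x}}{\jump{U}}{\gint}$ are the only ones with an indefinite sign; these are controlled by Young's inequality, $2\inner{\av{\normal_1 U_x}}{\jump{U}}{\gint} \le \tfrac{\varepsilon}{2}\norm{\av{\normal_1 U_x}}{\gint}^2\cdot(\text{weight}) + \tfrac{2}{\varepsilon}\norm{\sqrt{\sigma}\jump{U}}{\gint}^2\cdot(\ldots)$, followed by the $hp$-inverse trace inequality $\|V\|_{\partial T}\le \cinvT p h_T^{-1/2}\|V\|_T$ applied to $U_x|_T\in\mathbb{P}_{p-1}(T)\subset\mathbb{P}_p(T)$ on each element adjacent to the face, together with $\|U_x\|_{\partial T}$ controlled by $\|U_x\|_T$ — this is exactly why $\sigma$ in \eqref{eq:sigma_def} is chosen proportional to $(\cinvT n_1^F p)^2\max h_{T_i}^{-1}$ with the factor $64N_\partial$: the factor $N_\partial$ accounts for a face being counted once but an element having $N_\partial$ faces, and the numerical constant $64$ is generous enough to leave $\tfrac78$ of $\sum_T\norm{U_x}{T}^2$, $\tfrac78$ of the penalty term, and, after being a little wasteful, $\tfrac1{16}$ extra of the upwind norm.

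Concretely, after the integration by parts one has the exact identity
\[
a_{\rm dG}(U,U) = \sum_{T\in\Th}\norm{U_x}{T}^2 + \tfrac12\norm{U}{\rm uw}^2 + \norm{\sqrt{\sigma}\jump{U}}{\gint}^2 - 2\inner{\av{\normal_1 U_x}}{\jump{U}}{\gint},
\]
and it suffices to absorb the last term: bound it by $\tfrac18\sum_T\norm{U_x}{T}^2 + \tfrac18\norm{\sqrt{\sigma}\jump{U}}{\gint}^2$ using the weighted Young inequality and the choice of $\sigma$, which leaves $\tfrac78\sum_T\norm{U_x}{T}^2 + \tfrac12\norm{U}{\rm uw}^2 + \tfrac78\norm{\sqrt{\sigma}\jump{U}}{\gint}^2$; recalling the definition \eqref{dG:norm space} of $\norm{U}{\rm dG}^2 = \sum_T\norm{U_x}{T}^2 + \tfrac12\norm{U}{\rm uw}^2 + \norm{\sqrt{\sigma}\jump{U}}{\gint}^2$, this quantity equals $\tfrac78\norm{U}{\rm dG}^2 + \tfrac1{16}\norm{U}{\rm uw}^2$, since $\tfrac12 - \tfrac78\cdot\tfrac12 = \tfrac1{16}$. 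Combining with the time-derivative identity yields the claim.

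The main obstacle is purely bookkeeping rather than conceptual: tracking the face-versus-element combinatorics so that the penalty constant in \eqref{eq:sigma_def} is genuinely large enough (the role of $N_\partial$ and of taking $\max_i h_{T_i}^{-1}$ rather than an average), and being careful that the $\normal_1$ factor is bounded face-wise by $n_1^F$ so that the estimate is local. One should also double-check that the upwind algebraic identity is applied with the correct orientation on interior inflow faces (each such face is $\partial_-T\cap\partial_+T'$ for a unique neighbour $T'$, so the $|U^-|^2$ term from $T$'s side is precisely the $|U^+|^2$ outflow term from $T'$'s side, and these cancel against each other up to the jump-squared remainder), and that boundary inflow faces $\partial_-T\cap\gif$ and outflow faces $\partial_+T\cap\Gamma$ contribute non-negatively with the signs built into $\norm{\cdot}{\rm uw}$.
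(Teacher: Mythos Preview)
Your proposal is correct and follows essentially the same route as the paper: the exact identity for $a_{\rm dG}(U,U)$, the weighted Young/inverse-trace estimate yielding $2\inner{\av{\normal_1 U_x}}{\jump{U}}{\gint}\le \tfrac18\sum_T\norm{U_x}{T}^2+\tfrac18\norm{\sqrt{\sigma}\jump{U}}{\gint}^2$ (the paper phrases this via an auxiliary weight $\phi=\sigma/64$ and $\varepsilon=8$), and the final splitting $\tfrac12-\tfrac78\cdot\tfrac12=\tfrac1{16}$ are exactly the paper's steps. The only cosmetic difference is that the paper cites \cite{houston_schwab_suli_stabilised} for the upwind identity \eqref{eq:uw_id} where you sketch the telescoping argument directly.
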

\begin{proof}
 {Recalling that $V_1=U$ and employing} the trace inverse estimate \eqref{trace_H1_inv},  we have,  {respectively,} %for $V_1=U$,
\begin{equation}\label{eq:standard_flux_estimate}
\begin{aligned}
 \inner{\avg{\normal_1 U_x}}{\jump{V_1}}{\gint }
 &\leq  \Big(\sum_{F\in \gint } \phi^{-1}(\cinvT n_1^Fp)^2 \sum_{T\in\omega_F}   h_T^{-1} \norm{  U_x}{ T}^2\Big)^{\frac{1}{2}} \norm{\sqrt{\phi} \jump{V_1}}{\gint} \\
 &\leq  \Big(\su  \norm{  U_x}{ T}^2 \Big)^{\frac{1}{2}} \norm{\sqrt{\phi} \jump{V_1}}{\gint} \\
&\leq \frac{1}{2\varepsilon}  \su \norm{ U_x}{ T}^2 +  \frac{\varepsilon}{2}\norm{\sqrt{\phi} \jump{V_1}}{\gint}^2,
\end{aligned}
\end{equation}
for $\varepsilon>0$, with $N_{\partial}$ denoting the number of faces per element,  $\omega_F$  is the collection of two elements sharing the face $F$, upon selecting $\phi|_F = N_{\partial}(\cinvT n_1^Fp)^2\max_{T\in \omega_F}h_T^{-1} $, when $n_1^F\neq 0$; when   $n_1^F=0$, no further estimation is required.

Now, integration by parts and investigation of the flux terms gives the identity
 \begin{equation}\label{eq:uw_id}
 	\begin{split}
		&\sum_{T \in \Th} \!\! \Big(\inner{x U_y}{U}{T}
		\!-\!
		\inner{x\normal_2 \jumpu{U}}{U^+}{\partial_- T \cap \gint}
	\!	+\!
		\inner{x\normal_2 U^+ }{ U^+ }{\partial_- T \cap \gif} \!\Big)\! = \frac{1}{2}\norm{U}{\rm uw}^2;
\end{split}\end{equation}
we refer to \cite[Lemma 2.4]{houston_schwab_suli_stabilised} for a detailed proof. Thus,  \eqref{eq:uw_id} yields
\[
\begin{split}
   a_{\rm dG}(U,U)
=
    \sum_{T \in \Th} \norm{U_x}{T}^2
    -
    2 \inner{\avg{\normal_1 U_x}}{\jump{U}}{\gint}
    +
    \norm{\sqrt{\sigma} \jump{U}}{\gint}^2
    +
\frac{1}{2}\norm{U}{\rm uw}^2.
\end{split}
\]
Employing \eqref{eq:standard_flux_estimate} with $\varepsilon=8$, along with the identity $ \inner{U_t}{U}{} =
\frac{1}{2}\frac{\dee}{\dt} \norm{U}{}^2$  and the splitting $\frac{1}{2}\norm{U}{\rm uw}^2=\big(\frac{7}{16}+\frac{1}{16}\big)\norm{U}{\rm uw}^2$, completes the proof,  {upon recalling the definition of $\sigma$.}
\end{proof}

\begin{lemma}\label{lem:semi:supg}
    Let $U \in H^1(I; {\mathcal{V}}_h)$ and $\tau$ as in \eqref{def: tau}. Then, a.e.~in $t$, we have
    \begin{equation}\label{eq:semi:supg}
        \inner{U_t}{V_2}{} + a_{\rm dG}(U,V_2)
        \geq
        \frac{1}{4} \sum_{T \in \Th} \norm{ \sqrt{\tau}(U_t + xU_y) }{T}^2 - \frac{1}{16} \norm{U}{\rm dG}^2.
    \end{equation}
\end{lemma}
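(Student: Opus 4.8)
The plan is to test the dG bilinear form against $V_2 = \su \tau_T(xU_y + U_t)|_T$ and to track the various terms, using the fact that $V_2$ is an element-wise polynomial that vanishes nowhere-useful-ly only through $\tau$. Write $W := U_t + xU_y$, so that $V_2|_T = \tau_T W|_T \in \mathbb P_p(T)$. The ``good'' term we want to extract is $\inner{U_t}{V_2}{} + \su \inner{xU_y}{V_2}{T} = \su \norm{\sqrt{\tau} W}{T}^2$, coming from the volume part of $a_{\rm dG}$ together with $\inner{U_t}{V_2}{}$. Everything else must be absorbed: namely (i) the viscous volume term $\su \inner{U_x}{(V_2)_x}{T}$, (ii) the two numerical flux terms $\inner{\av{n_1 U_x}}{\jump{V_2}}{\gint}$ and $\inner{\av{n_1 (V_2)_x}}{\jump{U}}{\gint}$, (iii) the penalty term $\inner{\sigma\jump{U}}{\jump{V_2}}{\gint}$, and (iv) the upwind face terms $\su \inner{xn_2 \jumpu{U}}{V_2^+}{\partial_-T\cap\gint} + \su\inner{xn_2 U^+}{V_2^+}{\partial_-T\cap\gif}$. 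The strategy is: keep $\tfrac12\su\norm{\sqrt\tau W}{T}^2$ as the surplus, bound each of the remaining four contributions by $\varepsilon\su\norm{\sqrt\tau W}{T}^2 + C_\varepsilon(\text{pieces of }\norm{U}{\rm dG}^2)$, with $\varepsilon$-constants chosen so the $\tau W$ pieces sum to at most $\tfrac14\su\norm{\sqrt\tau W}{T}^2$ and the $\norm{U}{\rm dG}^2$ pieces sum to at most $\tfrac{1}{16}\norm{U}{\rm dG}^2$. This is precisely the mechanism by which the definition of $\tau_T$ in \eqref{def: tau} is reverse-engineered: the three reciprocal terms $\tfrac{65\cinv^2}{6}$, $\tfrac{3\sigma_T^2 h_T^2}{64 p^4}$, $\tfrac{(8\xn\cinvT)^2 h_T}{3p^2}$ correspond respectively to controlling the viscous/flux terms via $\norm{(V_2)_x}{T}\le \cinv p^2 h_T^{-1}\norm{V_2}{T}$, the penalty terms via the trace inverse estimate and the size of $\sigma$, and the upwind terms via $\xn$ and the trace inverse estimate.

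Concretely, the key steps are as follows. First, for term (i), Cauchy--Schwarz gives $\su\inner{U_x}{(V_2)_x}{T} \le \su \norm{U_x}{T}\norm{(V_2)_x}{T}$, and by the inverse inequality \eqref{trace_H1_inv} $\norm{(V_2)_x}{T} = \tau_T\norm{W_x}{T} \le \tau_T \cinv p^2 h_T^{-1}\norm{W}{T} = \cinv p^2 h_T^{-1}\sqrt{\tau_T}\,\norm{\sqrt{\tau_T}W}{T}$; Young's inequality then splits this into a small multiple of $\su\norm{\sqrt\tau W}{T}^2$ (with weight $\tau_T \cinv^2 p^4 h_T^{-2}$, controlled by the $\tfrac{65\cinv^2}{6}$ slot in $\tau_T^{-1}$) plus a small multiple of $\su\norm{U_x}{T}^2 \le \norm{U}{\rm dG}^2$. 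Second, for the flux terms (ii), one estimates $\inner{\av{n_1 U_x}}{\jump{V_2}}{\gint}$ using the trace inverse estimate on $\jump{V_2}$ exactly as in \eqref{eq:standard_flux_estimate}, distributing $\phi = \sigma$ so that the $U_x$-side is controlled by $\norm{U}{\rm dG}^2$ and the $V_2$-side by $\norm{\sqrt\sigma\jump{V_2}}{\gint}^2 \lesssim \su \sigma_T h_T^{-1} p^2 \cinvT^2 \norm{V_2}{T}^2 = \su \sigma_T h_T^{-1}p^2\cinvT^2 \tau_T\norm{\sqrt{\tau_T}W}{T}^2$, which is controlled because $\sigma_T \sim p^2 h_T^{-1}$ and the $\tfrac{3\sigma_T^2 h_T^2}{64 p^4}$ term sits in $\tau_T^{-1}$; the symmetric flux $\inner{\av{n_1 (V_2)_x}}{\jump{U}}{\gint}$ is handled the same way with an extra $\nabla$ inverse estimate absorbed by the same $\tau_T$ budget, and the penalty term (iii) $\inner{\sigma\jump{U}}{\jump{V_2}}{\gint}\le \norm{\sqrt\sigma\jump{U}}{\gint}\norm{\sqrt\sigma\jump{V_2}}{\gint}$ is split similarly, with the $\jump U$ side absorbed by $\norm{U}{\rm dG}^2$. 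Third, for the upwind terms (iv), Cauchy--Schwarz plus the trace inverse estimate on $V_2^+$ give bounds of the form $\norm{\sqrt{|xn_2|}\jumpu U}{\partial_-T\cap\gint}\cdot \xn^{1/2}\cinvT p h_T^{-1/2}\norm{V_2}{T}$, where the jump/boundary factors are pieces of $\norm{U}{\rm uw}^2 \le 2\norm{U}{\rm dG}^2$ (noting $\tfrac12\norm{U}{\rm uw}^2$ appears in $\norm{U}{\rm dG}^2$) and the $V_2$ factor feeds the $\tfrac{(8\xn\cinvT)^2 h_T}{3p^2}$ slot of $\tau_T^{-1}$. Finally, collect: the $\su\norm{\sqrt\tau W}{T}^2$ contributions total at most $\tfrac14\su\norm{\sqrt\tau W}{T}^2$ subtracted from the surplus $\tfrac12\su\norm{\sqrt\tau W}{T}^2$, leaving $\tfrac14\su\norm{\sqrt\tau W}{T}^2$, and the $\norm{U}{\rm dG}^2$ contributions total at most $\tfrac{1}{16}\norm{U}{\rm dG}^2$, giving \eqref{eq:semi:supg}.

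The main obstacle I expect is purely bookkeeping: tuning the $\varepsilon$'s in the several Young's inequalities so that (a) all $\tau W$ pieces really do sum to $\le \tfrac14$ and (b) all $\norm{U}{\rm dG}^2$ pieces (split across $\norm{U_x}{T}^2$, $\norm{\sqrt\sigma\jump U}{\gint}^2$, and the upwind part $\tfrac12\norm{U}{\rm uw}^2$) sum to $\le \tfrac{1}{16}$, while simultaneously the coefficients that land in $\tau_T^{-1}$ match exactly the three explicit constants $\tfrac{65\cinv^2}{6}$, $\tfrac{3\sigma_T^2 h_T^2}{64p^4}$, $\tfrac{(8\xn\cinvT)^2 h_T}{3p^2}$ in \eqref{def: tau}. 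In other words, the inequality is true essentially \emph{by construction} of $\tau$; the work is to verify that the particular numerical constants chosen are consistent. No conceptual difficulty arises beyond the already-used inverse estimates \eqref{trace_H1_inv} and the distribution-of-weights trick from \eqref{eq:standard_flux_estimate}; the one genuinely structural point — that $V_2$ is an admissible, element-wise degree-$p$ test function — has already been recorded in the Remark following \eqref{semi: test function}.
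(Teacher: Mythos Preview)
Your proposal is correct and follows essentially the same route as the paper's own proof: test against $V_2=\tau W$, extract the good term $\sum_T\norm{\sqrt{\tau}W}{T}^2$, and absorb the viscous volume, symmetric flux, penalty, and upwind contributions via the inverse estimates \eqref{trace_H1_inv} and Young's inequality, with the constants in the definition of $\tau_T$ reverse-engineered so that the absorbed $\tau W$-pieces total $\tfrac{3}{4}$ of the good term and the dG-pieces total $\tfrac{1}{16}\norm{U}{\rm dG}^2$. The only discrepancy is a minor bookkeeping slip in your surplus accounting (you start from $1$, not $\tfrac{1}{2}$, and give up $\tfrac{3}{4}$ to leave $\tfrac{1}{4}$), which you already flag as the expected obstacle.
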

\begin{proof}
Testing \eqref{def: seim-discrete bilinear form} against $V_2=\tau \utr$, with $\utr|_T:=(x U_y + U_t)|_T$, $T\in\Th$, and employing the inverse inequalities \eqref{trace_H1_inv}, along with standard estimation, gives
\begin{equation*}\begin{split}
    &  \inner{U_t}{\tau\utr}{} + a_{\rm dG}(U,\tau\utr) \\
   =&
    \sum_{T \in \Th}\!\big(
    \norm{\sqrt{\tau}\utr}{T}^2
    +
    \inner{U_x}{\tau\utr_x }{T}\!-\!
    \inner{x\normal_2 \jumpu{U}}{\tau W^+}{\partial_- T \cap \gint}
    \!-\!
    \inner{x\normal_2 U^+}{\tau W^+}{\partial_- T \cap \gif}\big)\\
   &\ -
    \inner{\avg{\normal_1 U_x}}{\jump{\tau\utr}}{\gint}
    -
    \inner{\avg{\normal_1 \tau\utr_x}}{\jump{U}}{\gint }
   +
    \inner{\sigma \jump{U}}{\jump{\tau\utr}}{\gint }.
\end{split}\end{equation*}
Employing inverse inequalities \eqref{trace_H1_inv} and standard steps, we estimate each group of terms separately, as follows. We have
\begin{equation*}\begin{split}
	&	  \sum_{T \in \Th}
		\inner{U_x}{\tau\utr_x }{T}
 \ge 	-\sum_{T \in \Th}
\frac{8 \cinv^2 p^4}{h_T^2}  \norm{\tau\utr }{T}^2 -\sum_{T \in \Th}\frac{1}{32}\norm{U_x}{T}^2.
\end{split}\end{equation*}
Also, working as in \eqref{eq:standard_flux_estimate} (for $\varepsilon=16$), and setting $\phi_T:=\max_{F\subset\partial T}\phi|_F$, with $\phi$ as in the proof of Lemma \ref{lem:semi:U}, we have
\begin{equation}\begin{split}\label{eq:double_inv_flux_estimate}
		\inner{\avg{\normal_1 U_x}}{\jump{\tau\utr}}{\gint}
		\ge&
		-
		\frac{1}{32}\su \norm{U_x}{T}^2 -8 \norm{\sqrt{\phi}\jump{\tau\utr}}{\gint}^2\\
		\ge &	-
		\frac{1}{32}\su \norm{U_x}{T}^2-16\sum_{F\in \gint }\! (\cinvT n_1^Fp)^2 \!\sum_{T\in\omega_F}  \! h_T^{-1} \norm{ \sqrt{\phi_T} \tau \utr}{ T}^2\\
			\ge &	-
		\frac{1}{32}\su \norm{U_x}{T}^2-16\su \phi_T^2\norm{  \tau \utr}{ T}^2,
\end{split}\end{equation}
 with an additional application of \eqref{trace_H1_inv}. Further, working as in \eqref{eq:standard_flux_estimate} (for $\varepsilon=4$),
 \[
 \begin{split}
 	\inner{\avg{\normal_1 \tau\utr_x}}{\jump{U}}{\gint }
 	\ge &
 		-
 	\frac{1}{8}\su \norm{ \tau\utr_x}{T}^2
 	-2 \norm{\sqrt{\phi}\jump{U}}{\gint}^2\\
 	\ge &
 	-
 	\su \frac{\cinv^2 p^4}{8h_T^2}\norm{ \tau\utr}{T}^2
 	-\frac{1}{32} \norm{\sqrt{\sigma}\jump{U}}{\gint}^2,
 \end{split}
 \]
 recalling that $\sigma=64\phi$. Also, working as before,
  \[
 \begin{split}
 	\inner{\sigma \jump{U}}{\jump{\tau\utr}}{\gint }
 		\ge &
 	-
 	8\norm{\sqrt{64\phi}\jump{\tau\utr}}{\gint}^2
 	-\frac{1}{32} \norm{\sqrt{\sigma}\jump{U}}{\gint}^2\\
 	\ge &
 	-
 	128\su \phi_T^2\norm{ \tau\utr}{T}^2
 	-\frac{1}{32} \norm{\sqrt{\sigma}\jump{U}}{\gint}^2.
 \end{split}
 \]
 Finally,
 \[
  \begin{aligned}
 &\su\Big(  \inner{x\normal_2 \jumpu{U}}{\tau W^+}{\partial_- T \cap \gint}
 \!+\!
 \inner{x\normal_2 U^+}{\tau W^+}{\partial_- T \cap \gif}\Big)
\\
\le&\ \frac{1}{32}\norm{U}{\rm uw}^2
 +16\su \!\frac{(\xn\cinvT  p)^2}{h_T} \norm{\tau\utr}{T}^2.
 \end{aligned}
 \]
Combining all the bounds and noting the definition of $\tau$ given in \eqref{def: tau}, we deduce
\begin{equation*}
\begin{split}
 &  \inner{U_t}{\tau\utr}{} + a_{\rm dG}(U,\tau\utr) \\
    \geq&
    \su\Big(
   \frac{1}{4} \norm{\sqrt{\tau} \utr }{T}^2
    -
     \frac{1}{16} \norm{U_x}{T}^2\Big)
 -
     \frac{1}{16} \norm{\sqrt{\sigma}\jump{U}}{\gint }^2
    -\frac{1}{32}\norm{U}{\rm uw}^2,
\end{split}\end{equation*}
from which the result follows.
\end{proof}
\begin{lemma}\label{lem:semi:hypo}
    Let $U \in H^1(I; {\mathcal{V}}_h)$, $A$ given by \eqref{eq:A_def} with $\alpha$, $\beta$, and $\gamma$ as in \eqref{eq:alpha_beta_gamma_def}, and $\delta$ as in \eqref{def:delta_def}. Then, a.e.~in $t$, we have the bound
    \begin{equation*}\begin{split}
        \inner{U_t}{V_3}{}  +    a_{\rm dG}(U,V_3)
        \geq&\,
        \frac{1}{2}  \sum_{T \in \Th}\! \Big(
       \frac{\dee}{\dt} \norm{\sqrt{A} \nabla U}{T}^2
        +
      \norm{\sqrt{A}\nabla U_x}{T}^2
        +
      \norm{ \sqrt{x\normal_2 A}\nabla U }{\partial_+ T}^2
        \\
        &
        \hspace{0cm}
        -
        \frac{1}{8} \norm{ \sqrt{\tau}(U_t + xU_y) }{T}^2
        +
        \frac{\alpha^2}{3}\norm{U_y}{T}^2
        -
        \frac{5}{4} \norm{U_x}{T}^2
        \Big)
        -
        \frac{1}{16}  \|U\|_{\rm dG}^2.
    \end{split}\end{equation*}
\end{lemma}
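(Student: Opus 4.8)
The plan is to test the space–discrete equation \eqref{def: seim-discrete bilinear form} against $V_3 = -\su \Div(A\nabla U)|_T$. Since $A$ is element-wise constant, $\Div(A\nabla U)|_T = (\alpha U_{xx} + 2\beta U_{xy} + \gamma U_{yy})|_T \in \mathbb{P}_{p-2}(T)$, so $V_3\in V_h$ and the test is admissible. I would rewrite $\inner{U_t}{V_3}{} + a_{\rm dG}(U,V_3)$ by integrating by parts, element-wise, the three volume pieces $\su\inner{U_t}{-\Div(A\nabla U)}{T}$, $\su\inner{U_x}{(V_3)_x}{T}$ (using $(V_3)_x|_T = -\Div(A\nabla U_x)|_T$, valid because $A$ is constant), and $\su\inner{xU_y}{-\Div(A\nabla U)}{T}$. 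These yield, respectively, $\frac12\frac{\dee}{\dt}\su\norm{\sqrt A\nabla U}{T}^2$; $\su\norm{\sqrt A\nabla U_x}{T}^2$; and $\su\big(\alpha\inner{U_y}{U_x}{T} + \beta\norm{U_y}{T}^2\big) + \frac12\su\inner{xn_2\nabla U}{A\nabla U}{\partial T}$, where the particular shape of $A$ is used to collapse the cross boundary terms from $\su\inner{\nabla(xU_y)}{A\nabla U}{T}$; they also produce the boundary remainders $-\su\inner{U_t + xU_y}{A\nabla U\cdot\normal}{\partial T}$ and $-\su\inner{U_x}{A\nabla U_x\cdot\normal}{\partial T}$. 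The first ``good'' quantity is kept entirely; from the second only $\frac12\su\norm{\sqrt A\nabla U_x}{T}^2$ is retained, the rest being spent on absorption; for the third, Young's inequality together with $\beta - \alpha^2 = \frac13\alpha^2$ (which follows from \eqref{eq:alpha_beta_gamma_def}) gives $\alpha\inner{U_y}{U_x}{T} + \beta\norm{U_y}{T}^2 \geq \frac13\alpha^2\norm{U_y}{T}^2 - \frac14\norm{U_x}{T}^2$, while $\frac12\su\inner{xn_2\nabla U}{A\nabla U}{\partial T}$ splits into the retained nonnegative outflow part $\frac12\su\norm{\sqrt{xn_2 A}\nabla U}{\partial_+ T}^2$ and a negative inflow part that goes into the absorption.

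It then remains to bound from below the leftover face terms of $a_{\rm dG}(U,V_3)$, namely $-\inner{\av{\normal_1 U_x}}{\jump{V_3}}{\gint} - \inner{\av{\normal_1 (V_3)_x}}{\jump{U}}{\gint} + \inner{\sigma\jump{U}}{\jump{V_3}}{\gint} - \su\big(\inner{x\normal_2\jumpu{U}}{V_3^+}{\partial_- T\cap\gint} + \inner{x\normal_2 U^+}{V_3^+}{\partial_- T\cap\gif}\big)$, together with the three boundary remainders above. I would treat them term by term using the $hp$ trace and gradient inverse inequalities \eqref{trace_H1_inv}, applied \emph{component-wise} to the polynomials $\sqrt A\nabla U$, $\nabla U_x$, $\Div(A\nabla U)$, $U_t + xU_y$, etc., Young's inequality, and the smallness of the entries of $A$ (recall $\alpha\sim\delta^{-1}$, $\beta\sim\delta^{-2}$, $\gamma\sim\delta^{-3}$). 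Each contribution is absorbed into one of the slack quantities $\frac12\su\norm{\sqrt A\nabla U_x}{T}^2$, $\frac{1}{16}\su\norm{\sqrt\tau(U_t + xU_y)}{T}^2$, $\frac13\alpha^2\su\norm{U_y}{T}^2$, $\su\norm{U_x}{T}^2$, $\norm{\sqrt\sigma\jump{U}}{\gint}^2$ and $\norm{U}{\rm uw}^2$, the last three (plus a little of $\su\norm{U_x}{T}^2$) being precisely what the $\frac1{16}\norm{U}{\rm dG}^2$ term on the right of the asserted inequality provides. The definitions of $\sigma$ in \eqref{eq:sigma_def}, $\tau$ in \eqref{def: tau} and $\delta$ in \eqref{def:delta_def} are exactly what make these absorptions close: the summands $2h_T^2 p^{-4}\tau_T^{-1}$, $1040N_\partial(n_1^F C_\rho/\cinvT)^2$ and $8(\cinv\xn/\cinvT)^2 p^{-2}h_T$ inside $\mathcal{R}_T$, and the factor $4 + \cinv^2/(8\cinvT^2)$ in $\delta_T$, are calibrated against the $(U_t + xU_y)$--, the interior-face-- and the $A\nabla U_x$--type contributions, respectively.

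I expect the main obstacle to be controlling the $\norm{U_y}{T}^2$--type contributions arising from the negative inflow part of $\frac12\inner{xn_2\nabla U}{A\nabla U}{\partial T}$ and from the face terms: the only budget available for $\norm{U_y}{T}^2$ is $\frac16\alpha^2\su\norm{U_y}{T}^2 = O(\delta^{-2})$, whereas the naive estimate $\norm{\sqrt A\nabla U}{T}^2 \le \lambda_{\max}(A)\norm{\nabla U}{T}^2\le\alpha\norm{\nabla U}{T}^2$ would give a $U_y$--coefficient of order $\delta^{-1}$, which is too large. This is circumvented by exploiting the precise hierarchy in \eqref{eq:alpha_beta_gamma_def}: writing $2\beta U_x U_y \le (\beta^2/\gamma)U_x^2 + \gamma U_y^2$ and using $\beta^2/\gamma = \frac1{10}\delta^{-1}$, $\gamma = O(\delta^{-3})$ gives $\norm{\sqrt A\nabla U}{T}^2 \le \frac{9}{40\delta}\norm{U_x}{T}^2 + 2\gamma\norm{U_y}{T}^2$, whose $U_y$--coefficient is $O(\delta^{-3})\ll\delta^{-2}$; similarly, every second $y$--derivative occurring in $\Div(A\nabla U)$ and $\Div(A\nabla U_x)$ carries a factor $\gamma$, so the $U_y$--contributions from the face terms are equally harmless. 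Collecting all the bounds and checking that the budgets suffice then yields the stated inequality.
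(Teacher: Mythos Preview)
Your overall architecture is exactly the paper's: integrate by parts element-wise to produce $\frac{1}{2}\frac{\dee}{\dt}\norm{\sqrt{A}\nabla U}{T}^2$, $\norm{\sqrt{A}\nabla U_x}{T}^2$, the special term $\alpha(U_y,U_x)_T+\beta\norm{U_y}{T}^2+\tfrac12\inner{xn_2\nabla U}{A\nabla U}{\partial T}$ and three boundary remainders, then absorb the remainders and all face terms of $a_{\rm dG}(U,V_3)$ via inverse inequalities into the stated negative pieces. The paper bundles all of these ``bad'' contributions into a single term $-\delta_T\norm{\sqrt{A}\nabla U}{T}^2$ (this is precisely what \eqref{def:delta_def} is calibrated for) and only then expands $\norm{\sqrt{A}\nabla U}{T}^2$ in $U_x$, $U_y$.

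There is, however, a quantitative gap in your final balancing. With your cross-term split $\alpha(U_y,U_x)_T\ge -\alpha^2\norm{U_y}{T}^2-\tfrac14\norm{U_x}{T}^2$ you retain $(\beta-\alpha^2)\norm{U_y}{T}^2=\tfrac13\alpha^2\norm{U_y}{T}^2$, so the $U_y$-budget available for absorption is only $\tfrac13\alpha^2-\tfrac16\alpha^2=\tfrac16\alpha^2=\tfrac{6}{2304\delta^2}$. With your proposed expansion $\norm{\sqrt{A}\nabla U}{T}^2\le \tfrac{9}{40\delta}\norm{U_x}{T}^2+2\gamma\norm{U_y}{T}^2$, absorbing $\delta_T\norm{\sqrt{A}\nabla U}{T}^2$ costs $2\delta_T\gamma=\tfrac{20}{2304\delta^2}$ on the $U_y$ side, which exceeds the budget by more than a factor of three; the constants do not close. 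The paper instead uses the pair
\[
\alpha(U_x,U_y)_T\ge -\tfrac{2}{3}\alpha^2\norm{U_y}{T}^2-\tfrac{3}{8}\norm{U_x}{T}^2,
\qquad
\norm{\sqrt{A}\nabla U}{T}^2\le 2\alpha\norm{U_x}{T}^2+\Big(\gamma+\tfrac{\beta^2}{\alpha}\Big)\norm{U_y}{T}^2,
\]
which is tuned so that the exact identity $\beta-\tfrac{2}{3}\alpha^2-\delta_T\big(\gamma+\tfrac{\beta^2}{\alpha}\big)=(384\delta_T^2)^{-1}=\tfrac16\alpha^2$ holds, and simultaneously $\tfrac{3}{8}+2\delta_T\alpha=\tfrac58$ on the $U_x$ side. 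Replace your two Young splits by these and your argument goes through verbatim.
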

\begin{proof}

	Testing \eqref{def: seim-discrete bilinear form} with $V_3 = -\Div_h(A \nabla_h U)$, integrations by parts give
	\begin{equation*}\begin{split}
		&	  \inner{U_t}{V_3}{}  +    a_{\rm dG}(U,V_3)
			  \\
			    =&
			\sum_{T \in \Th} \bigg(
			\frac{1}{2} \frac{\dee}{\dt} \norm{\sqrt{A} \nabla U}{T}^2
			+
			\norm{\sqrt{A}\nabla U_x}{T}^2	+
			\inner{\nabla (x U_y)}{(A\nabla U)}{T}
			\\
			&\qquad\quad
			-
			\inner{U_t+ x U_y}{\normal \cdot (A\nabla U)}{\partial T}
			-
			\inner{U_x}{\normal \cdot (A\nabla U_x)}{\partial T}		\\
			&\qquad\quad
			+
			\inner{x\normal_2 \jumpu{U}}{ V_3^+ }{\partial_- T \cap \gint}
			+
			\inner{x\normal_2 U^+}{V_3^+}{\partial_- T \cap \gif}
			\bigg)
			\\
			&-
			\inner{\avg{\normal_1 U_x}}{\jump{ V_3}}{\gint }
			-
			\inner{\avg{\normal_1 \Div(A\nabla U_x) }}{\jump{U}}{\gint }
			+
			\inner{\sigma \jump{U}}{\jump{V_3}}{\gint }.
	\end{split}\end{equation*}
We estimate each term separately.

Focusing on the \emph{special} term $	\inner{\nabla(x U_y)}{A \nabla U}{T}$, we have, respectively,
	\begin{equation*}
		\begin{aligned}
	&	\inner{\nabla(x U_y)}{A \nabla U}{T}\\
		=&\
		 \alpha \inner{U_x}{U_y}{T}
		 +
		 \beta \norm{U_y}{T}^2
		 +
		 \inner{(x\sqrt{A} \nabla U)_y}{ \sqrt{A} \nabla U}{T}\\
		 =&\
		\alpha \inner{U_x}{U_y}{T}
		+
		\beta \norm{U_y}{T}^2
		+
		\frac{1}{2} \norm{ \sqrt{x\normal_2 A}\nabla U }{\partial_+ T}^2
		-
		\frac{1}{2} \norm{ \sqrt{|x\normal_2| A}\nabla U }{\partial_- T}^2\\
		\ge & \
	 \Big(\beta-\frac{2\alpha^2}{3}\Big)\norm{U_y}{T}^2	-	\frac{3}{8} \norm{U_x}{T} ^2+	\frac{1}{2} \norm{ \sqrt{x\normal_2 A}\nabla U }{\partial_+ T}^2
	 - \frac{(\xn\cinvT p)^2}{2h_T} \norm{\sqrt{A}\nabla U }{T}^2,
		\end{aligned}
	\end{equation*}
using the trace inverse inequality \eqref{trace_H1_inv}, with the understanding that $\alpha,\beta,\gamma$ are evaluated on $T$.

Now, from \eqref{trace_H1_inv}, along with elementary manipulation, we have
	\begin{equation*}\begin{split}
		-	   \inner{U_t+ x U_y}{\normal \cdot (A\nabla U)}{\partial T}
			\geq&\
		-	\frac{\cinvT ^2	p^2}{\sqrt{\tau_T}h_T} \norm{ \sqrt{\tau} (U_t+ x U_y)}{T}
	 \norm{A \nabla U}{ T}
			\\
			\geq &\
		-			\frac{1}{16}	\norm{ \sqrt{\tau} (U_t+ x U_y)}{T}^2 -  \frac{4\cinvT^4p^4}{\tau h_T^2}
			\norm{A \nabla U}{T}^2.
	\end{split}\end{equation*}
Working in a similar fashion, we also have
	\begin{equation*}
		-	\inner{U_x}{\normal \cdot (A\nabla U_x)}{\partial T}
			\geq
		-	\frac{1}{32}     \norm{U_x}{T}^2 -  \frac{ 8\cinvT^4p^4}{h_T^2}\norm{A\nabla U_x}{T}^2.
\end{equation*}
Next, working as in \eqref{eq:standard_flux_estimate} (for $\varepsilon=16$), and continuing as in \eqref{eq:double_inv_flux_estimate}, we have
	\begin{equation*}\begin{split}
			   \inner{\avg{\normal_1 U_x}}{\jump{V_3 }}{\gint}
			   \geq&\ - \frac{1}{32}  \su \norm{ U_x}{ T}^2 -  8\norm{\sqrt{\phi} \jump{V_3}}{\gint}^2\\
	   \geq&\ - \frac{1}{32}  \su \norm{ U_x}{ T}^2 -  16\su\phi_T^2\norm{V_3}{T}^2\\
	   \geq&\ - \frac{1}{32}  \su \norm{ U_x}{ T}^2 -  \su\frac{32\cinv^2\phi_T^2 p^4}{h_T^2}\norm{A\nabla U}{T}^2,
	\end{split}\end{equation*}
with an additional use of the inverse inequality \eqref{trace_H1_inv} in the last step.

Further, working as in \eqref{eq:standard_flux_estimate} (for $\varepsilon=4$) and recalling that $\sigma=64\phi$, we have
	\begin{equation*}\begin{split}
			 \inner{\avg{\normal_1 \Div(A\nabla U_x) }}{\jump{U}}{\gint}
			  \geq&\ -\frac{1}{8}  \su \norm{ \Div(A\nabla U_x) }{ T}^2 -  \frac{1}{32}\norm{\sqrt{\sigma} \jump{U}}{\gint}^2\\
			    \geq&\ -  \su \frac{\cinv^2p^4}{4h_T^2}\norm{ A\nabla U_x }{ T}^2 -  \frac{1}{32}\norm{\sqrt{\sigma} \jump{U}}{\gint}^2,
	\end{split}\end{equation*}
	and, similarly,
	\begin{equation*}\begin{split}
		-	   \inner{\sigma \jump{U}}{\jump{ V_3 }}{\gint }
			\geq& -\frac{1}{4}
			\su  \sigma_T^2\norm{ V_3}{T}^2
			-
			\frac{1}{32}   \norm{\sqrt{\sigma}\jump{U}}{\gint }^2\\
				\geq& -
			\su  \frac{\cinv^2\sigma_T^2p^4}{2h_T^2}\norm{A\nabla U}{T}^2
			-
			\frac{1}{32}   \norm{\sqrt{\sigma}\jump{U}}{\gint }^2.
	\end{split}\end{equation*}
	Finally, we also have
	\begin{equation*}\begin{split}
			&\su \Big( \inner{x\normal_2 \jumpu{U}}{ \Div(A\nabla U^+) }{\partial_- T \cap \gint}
			+
			\inner{x\normal_2 U^+}{ \Div(A\nabla U^+) }{\partial_- T \cap \gif}\Big)
			\\
			&\geq  -\sqrt{2}\norm{U}{\rm uw}
	\su	\frac{	\cinvT\cinv	\xn p^3}{h_T^{\frac{3}{2}}}
			\norm{ A \nabla U }{T}\\
			&
			\geq
			-\frac{1}{32}\norm{U}{\rm uw}^2
			-
			\su\frac{(4\cinvT\cinv	\xn p^3)^2}{h_T^3}
			\norm{ A\nabla U }{T}^2.
	\end{split}\end{equation*}
	For convenience, we denote by $|A|$ the spectral norm of $A$, i.e., its largest eigenvalue, and we note the elementary estimate $|A| \leq 2 \alpha=(4\delta)^{-1}$. The proof is presented in Lemma \ref{eigen_est}. Also, observing that $\max_{i=1,2}h_i^{-1}\le C_{\rho} h_T^{-1}$, and that $\sigma_T=64\phi_T$, we have
	\[
	\frac{\cinv^2\sigma_T^2p^4}{2h_T^2}+  \frac{32\cinv^2\phi_T^2 p^4}{h_T^2}\le \frac{2080N_\partial (n_1^FC_\rho\cinvT)^2p^8}{h_T^4},
	\]
	so that, the definition of $\delta$, \eqref{def:delta_def}, implies
	\[
	\Big( \Big(	   \frac{4\cinvT^4p^4}{\tau h_T^2}+ \frac{2080N_\partial (n_1^FC_\rho\cinvT)^2p^8}{h_T^4}+ \frac{(4\cinvT\cinv	\xn p^3)^2}{h_T^3} \Big)|A|_T| + \frac{(\xn\cinvT p)^2}{2h_T}\Big)\le \delta_T,
	\]
	and also
	$
	\big(1 - \big( 8\cinvT^4+   \frac{\cinv^2}{4} \big) \frac{| A|_T |p^4}{h_T^2}\big) \ge \frac{1}{2}$.

	Combining the above bounds, we arrive at
\begin{equation}\label{eq:inder_Vthree}
	\begin{aligned}
			&     \inner{U_t}{V_3)}{}  +    a_{\rm dG}(U,V_3) \\
		\geq
	&		\sum_{T \in \Th} \bigg(
		\frac{1}{2} \frac{\dee}{\dt} \norm{\sqrt{A} \nabla U}{T}^2
		+
	\frac{1}{2}  \norm{\sqrt{A}\nabla U_x}{T}^2+	\frac{1}{2} \norm{ \sqrt{x\normal_2 A}\nabla U }{\partial_+ T}^2- \delta_T\norm{\sqrt{A}\nabla U}{T}^2
		\\
	&\qquad	-			\frac{1}{16}	\norm{ \sqrt{\tau} (U_t+ x U_y)}{T}^2
		+ \Big(\beta-\frac{2\alpha^2}{3}\Big)\norm{U_y}{T}^2	-	\frac{3}{8} \norm{U_x}{T} ^2\bigg)	-\frac{1}{16}\norm{U}{\rm dG}^2.
	\end{aligned}
\end{equation}
Also, since
\[
\|\sqrt{A}\nabla U\|_T^2= \alpha \|U_x\|_T^2+ 2\beta (U_x, U_y)_T + \gamma \|U_y\|_T^2\le  2\alpha \|U_x\|_T^2+\Big( \gamma + \frac{\beta^2}{\alpha }\Big) \|U_y\|_T^2,
\]
 {using the above bound, we infer
\[
	\begin{aligned}
			&     \inner{U_t}{V_3)}{}  +    a_{\rm dG}(U,V_3) \\
		\geq
	&		\sum_{T \in \Th} \bigg(
		\frac{1}{2} \frac{\dee}{\dt} \norm{\sqrt{A} \nabla U}{T}^2
		+
	\frac{1}{2}  \norm{\sqrt{A}\nabla U_x}{T}^2+	\frac{1}{2} \norm{ \sqrt{x\normal_2 A}\nabla U }{\partial_+ T}^2- \delta_T\norm{\sqrt{A}\nabla U}{T}^2
		\\
	&\quad	-			\frac{1}{16}	\norm{ \sqrt{\tau} (U_t+ x U_y)}{T}^2
		+ \Big(	\beta -\frac{2\alpha^2}{3} - \frac{\delta_T \beta^2}{\alpha} -\delta_T \gamma\Big)\norm{U_y}{T}^2
\\	&\quad 	-	(\frac{3}{8} +2\delta_T \alpha) \norm{U_x}{T} ^2\bigg)
-\frac{1}{16}\norm{U}{\rm dG}^2,
	\end{aligned}
\]}
and, 	recalling $\alpha = (8\delta)^{-1}$, $\beta = (48\delta^2)^{-1}$ and $\gamma = 10(48^2\delta^3)^{-1}$, we calculate
	\begin{equation*}\begin{split}
		\beta -\frac{2\alpha^2}{3} - \frac{\delta_T \beta^2}{\alpha} -\delta_T \gamma
		=
		(384 \delta_T^2)^{-1} =&\ \frac{\alpha^2}{6}, \quad\text{and}\quad
		 \frac{3}{8 }  +2\delta_T \alpha =\ \frac{5}{8},
\end{split}\end{equation*}
on each $T\in\Th$.
The result follows upon using the last bound and the last two identities on \eqref{eq:inder_Vthree}.
\end{proof}
The next step is to combine Lemmata \ref{lem:semi:U}, \ref{lem:semi:supg}, and \ref{lem:semi:hypo}, to show a positivity result, which will eventually imply the asymptotic numerical hypocoercivity of the semi-discrete scheme.
To that end, we define the ``enhanced'' norm
\begin{equation}\label{def: semi-discrete dG norm}
\begin{split}
\ndg{w}: =&\Big(
    \sum_{T \in \Th} \Big(
    \norm{\sqrt{A}\nabla w_x}{T}^2
  +
     \frac{1}{2}  \norm{\alpha w_y}{T}^2
+
  \frac{1}{2} \norm{w_x}{T}^2 +
  \frac{1}{2 } \norm{ \sqrt{\tau}(w_t + xw_y) }{T}^2\\
 &   \hspace{1.2cm}
+
\norm{ \sqrt{x\normal_2 A}\nabla w }{\partial_+ T}^2
\Big)
+ \norm{\sqrt{\sigma} \jump{w}}{\gint }^2+2\norm{w}{\rm uw}^2\Big)^{\frac{1}{2}},
\end{split}
\end{equation}
for all $w\in H^2(\Omega,\Th)$,
along with the ``broken'' $A$-inner product and norm:
\[
\Ahinner{w}{v}:= (w,v)+(A\nabla_h w,\nabla_h v), \quad\text{ and }\quad \norm{w}{A,h}:=\sqrt{\Ahinner{w}{w}},
\]
for all $w,v\in H^1(\Omega,\Th)$.
\begin{proposition}\label{stab_semi}
    Let $U \in H^1(I; {\mathcal{V}}_h)$, $V\in  {\mathcal{V}}_h$ given by \eqref{semi: test function}, $A$ given by \eqref{eq:A_def} with $\alpha$, $\beta$, and $\gamma$ as in \eqref{eq:alpha_beta_gamma_def}, and $\delta$ as in \eqref{def:delta_def}. Then, a.e.~in $t$, we have the estimate
    \begin{equation}
        \inner{U_t}{V}{}  +  a_{\rm dG}(U,V)
        \geq
        \frac{1}{2} \frac{\dee}{\dt}\norm{U}{A,h}^2+ \frac{1}{4}\ndg{U}^2.
    \end{equation}
\end{proposition}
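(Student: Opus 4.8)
The plan is to combine, by linearity of the map $W\mapsto\inner{U_t}{W}{}+a_{\rm dG}(U,W)$, the three lower bounds already at our disposal. Writing $V=V_1+V_2+V_3$ as in \eqref{semi: test function}, one notes that each $V_i$ lies in $V_h$ for a.e.\ fixed $t$ (element-wise polynomials of total degree $p$, since $\tau$ and $A$ are element-wise constant and differentiation lowers the degree), so that
\[
\inner{U_t}{V}{}+a_{\rm dG}(U,V)=\sum_{i=1}^{3}\big(\inner{U_t}{V_i}{}+a_{\rm dG}(U,V_i)\big),
\]
and one simply applies Lemma \ref{lem:semi:U} to the $V_1$-term, Lemma \ref{lem:semi:supg} to the $V_2$-term, and Lemma \ref{lem:semi:hypo} to the $V_3$-term. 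All the genuine analytical work — the inverse-estimate manipulations, the positivity of the transport/upwind part, and the discretisation-dependent tuning of $\sigma$, $\tau$, $\delta$ in \eqref{eq:sigma_def}, \eqref{def: tau}, \eqref{def:delta_def} — is already encapsulated in those three lemmas; what remains is purely bookkeeping.

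First I would isolate the two time-derivative contributions: $\tfrac12\tfrac{\dee}{\dt}\norm{U}{}^2$ from Lemma \ref{lem:semi:U} and $\tfrac12\su\tfrac{\dee}{\dt}\norm{\sqrt{A}\nabla U}{T}^2$ from Lemma \ref{lem:semi:hypo}. Since $\norm{U}{A,h}^2=\norm{U}{}^2+\su\norm{\sqrt{A}\nabla_h U}{T}^2$, these add up to exactly $\tfrac12\tfrac{\dee}{\dt}\norm{U}{A,h}^2$, the first term of the claimed estimate; no other term carries a time derivative.

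It then remains to show that the sum of the stationary remainders dominates $\tfrac14\ndg{U}^2$. The mechanism is that the coercivity surplus $\tfrac78\norm{U}{\rm dG}^2+\tfrac1{16}\norm{U}{\rm uw}^2$ produced by Lemma \ref{lem:semi:U} has to service the two $-\tfrac1{16}\norm{U}{\rm dG}^2$ deficits of Lemmas \ref{lem:semi:supg}--\ref{lem:semi:hypo} and, above all, the $-\tfrac58\su\norm{U_x}{T}^2$ term coming from Lemma \ref{lem:semi:hypo} (one half of its $-\tfrac54\su\norm{U_x}{T}^2$), while still leaving at least a quarter of the $\su\norm{U_x}{T}^2$, $\norm{\sqrt\sigma\jump{U}}{\gint}^2$ and $\norm{U}{\rm uw}^2$ entries of the enhanced norm \eqref{def: semi-discrete dG norm}. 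The remaining entries of $\tfrac14\ndg{U}^2$ are covered directly by Lemma \ref{lem:semi:hypo}: its $\tfrac12\su\norm{\sqrt A\nabla U_x}{T}^2$ and $\tfrac12\su\norm{\sqrt{x\normal_2 A}\nabla U}{\partial_+T}^2$ are twice what is needed; its $\tfrac{\alpha^2}{6}\su\norm{U_y}{T}^2$ dominates the required $\tfrac18\su\norm{\alpha U_y}{T}^2=\tfrac{\alpha^2}{8}\su\norm{U_y}{T}^2$ since $\tfrac16\ge\tfrac18$; and for the streamline term $+\tfrac14$ from Lemma \ref{lem:semi:supg} against $-\tfrac1{16}$ from Lemma \ref{lem:semi:hypo} leaves $\tfrac3{16}\ge\tfrac18$. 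Discarding the non-negative leftovers then yields $\tfrac12\tfrac{\dee}{\dt}\norm{U}{A,h}^2+\tfrac14\ndg{U}^2$.

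I expect the only real obstacle to be this last constant-chasing: one must keep precise track of how the $\norm{\cdot}{\rm dG}$-coercivity of Lemma \ref{lem:semi:U} is apportioned between paying off the negative contributions of the other two lemmas and feeding the $\su\norm{U_x}{T}^2$, $\norm{\sqrt\sigma\jump{U}}{\gint}^2$ and $\norm{U}{\rm uw}^2$ parts of the enhanced norm, and verify that no coefficient turns negative — the $\su\norm{U_x}{T}^2$ balance, which the $-\tfrac54$ factor in Lemma \ref{lem:semi:hypo} renders essentially tight, and the upwind balance being the sensitive ones. There is no analytical difficulty beyond the three lemmas; the argument is entirely linear, and a marginally unfavourable constant could always be restored by a harmless re-tuning of $\sigma$, $\tau$, or of the overall factor $\tfrac14$.
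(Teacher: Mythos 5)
Your proposal is exactly the paper's proof: the paper disposes of Proposition \ref{stab_semi} in one line, ``combine Lemmata \ref{lem:semi:U}, \ref{lem:semi:supg}, \ref{lem:semi:hypo}'', and your decomposition of the time-derivative terms into $\tfrac12\tfrac{\dee}{\dt}\norm{U}{A,h}^2$ and your accounting of the stationary remainders is the intended bookkeeping. One remark on the balance you rightly flag as sensitive: carrying out the arithmetic, the $\su\norm{U_x}{T}^2$ coefficient comes out exactly tight ($\tfrac34-\tfrac58=\tfrac18$, matching $\tfrac14\cdot\tfrac12$), but the upwind coefficient actually falls short — the three lemmata yield $\bigl(\tfrac78-\tfrac1{16}-\tfrac1{16}\bigr)\cdot\tfrac12+\tfrac1{16}=\tfrac7{16}$ of $\norm{U}{\rm uw}^2$, whereas $\tfrac14\ndg{U}^2$ with the weight $2$ in \eqref{def: semi-discrete dG norm} demands $\tfrac12$. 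This $\tfrac1{16}\norm{U}{\rm uw}^2$ deficit is a slip in the paper's constants rather than in your argument, and is repaired by precisely the harmless re-tuning you anticipate (e.g.\ replacing the weight $2$ on $\norm{w}{\rm uw}^2$ in the enhanced norm by $\tfrac74$).
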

\begin{proof}
	The result follows by combining the bounds in Lemmata \ref{lem:semi:U}, \ref{lem:semi:supg}, and \ref{lem:semi:hypo}.
\end{proof}

The next step is to show that $\ndg{\cdot}$ controls $\norm{\cdot}{A}$, that is we have ``Poincar\'e inequality''/``spectral gap''. Due to the dependence of $A$ on the mesh parameters $h$ and $p$,  as expected, we can only show a \emph{mesh dependent spectral gap}, with the respective constant degenerating as $h\to 0$ and/or $p\to\infty$.
\begin{lemma}[mesh-dependent spectral gap]\label{lemma:mesh-dependent spectral gap}
    There exists a number $\kappa\equiv\kappa(h,p)>0$, depending  {on} $h_{\min}:=\min_{\Omega}h$, on the polynomial degree $p$, on the domain geometry, and on the mesh shape-regularity constant $C_\rho$, such that, for all $w \in  {\mathcal{V}}_{h}$, we have
    \begin{equation}\label{mesh-dependent spectral gap}
    \ndg{w}^2 \geq \kappa\, \norm{w}{A,h}^2.
    \end{equation}
    Moreover, as $h_{\min}/ p\to 0$, we have $\kappa\equiv \kappa(h,p)\sim h^4_{\min} p^{-8}$.
    \end{lemma}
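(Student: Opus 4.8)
The plan is to bound the two parts of $\norm{w}{A,h}^2 = \norm{w}{}^2 + \norm{\sqrt{A}\nabla_h w}{}^2$ separately. The broken-gradient part is controlled by $\ndg{w}^2$ with a \emph{universal} constant: reusing the elementary estimate already used in the proof of Lemma~\ref{lem:semi:hypo}, on each $T\in\Th$ one has $\norm{\sqrt{A}\nabla w}{T}^2 \le 2\alpha_T\norm{w_x}{T}^2 + (\gamma+\tfrac{\beta^2}{\alpha})\norm{w_y}{T}^2 = 8\alpha_T\big(\tfrac12\norm{w_x}{T}^2 + \tfrac12\norm{\alpha w_y}{T}^2\big)$, using the identity $\gamma+\beta^2/\alpha = 4\alpha^3$ that is immediate from \eqref{eq:alpha_beta_gamma_def}. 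Since the two bracketed terms are among those defining $\ndg{w}^2$ and $\alpha_T\le\tfrac18<1$ because $\delta_T\ge1$, summation over $T$ gives $\norm{\sqrt{A}\nabla_h w}{}^2 \le 8\alpha_{\max}\,\ndg{w}^2\le\ndg{w}^2$, with $\alpha_{\max}:=\max_{T}\alpha_T$.

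It remains to control $\norm{w}{}^2$, for which I would invoke a broken Poincar\'e--Friedrichs inequality for $V_h$ with homogeneous Dirichlet data on $\gif$, which has positive one-dimensional Hausdorff measure by assumption: there is $\cpoinc>0$, depending only on $\Omega$, on $\gif$, and on the shape-regularity constant $C_\rho$, such that
\[
\norm{w}{}^2 \le \cpoinc\Big(\su\norm{\nabla w}{T}^2 + \sum_{F\in\gint}\tfrac{1}{h_F}\norm{\jump{w}}{F}^2 + \sum_{F}\tfrac{1}{h_F}\norm{w}{F\cap\gif}^2\Big),
\]
the last sum being taken over boundary faces; see, e.g., \cite{cdgh_book}. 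Reading the $\tfrac12\norm{w_x}{T}^2$ and $\tfrac12\norm{\alpha w_y}{T}^2$ terms off $\ndg{\cdot}$ gives $\su\norm{\nabla w}{T}^2 \le (2 + 2\alpha_{\min}^{-2})\ndg{w}^2 \le 4\alpha_{\min}^{-2}\,\ndg{w}^2$, with $\alpha_{\min}:=\min_{T}\alpha_T$; since $\delta_T\sim p^4 h_T^{-2}$ as $h_T/p\to0$, one has $\alpha_{\min}\sim h_{\min}^2 p^{-4}$, and this is the \emph{dominant} contribution, producing the announced order $\alpha_{\min}^{-2}\sim h_{\min}^{-4}p^{8}$.

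The two jump/boundary groups are where care is needed, since $\ndg{\cdot}$ controls jumps only through $\norm{\sqrt{\sigma}\jump{w}}{\gint}$, where $\sigma$ vanishes on faces whose normal is parallel to the $y$-axis, and through the $|xn_2|$-weighted upwind seminorm $\norm{w}{\rm uw}$, which degenerates along $\{x=0\}$. The resolution is that these mechanisms cover complementary families of faces: on a straight interior face $F$, if $(n_1^F)^2\ge\tfrac12$ then $\sigma_F\ge c\,p^2 h_F^{-1}$ by \eqref{eq:sigma_def} and shape-regularity, whence $h_F^{-1}\norm{\jump{w}}{F}^2\le(cp^2)^{-1}\norm{\sqrt{\sigma}\jump{w}}{F}^2$; otherwise $n_2^2>\tfrac12$, and, since $F$ is an interval of $x$-extent $\sim h_F$ on which $|x|$ vanishes at most to first order, a one-dimensional Jacobi-weight polynomial inequality $\norm{q}{F}^2\le C_p h_F^{-1}\norm{\sqrt{|x|}q}{F}^2$ for $q\in\mathbb P_p(F)$ (with $C_p$ depending polynomially on $p$), together with $|xn_2|\ge 2^{-1/2}|x|$, yields $h_F^{-1}\norm{\jump{w}}{F}^2\le\sqrt2\,C_p h_{\min}^{-2}\norm{\sqrt{|xn_2|}\jumpu{w}}{F}^2$. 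Summing over faces (each interior face being covered by the upwind terms of its two adjacent elements), and treating the $\gif$-boundary group identically — there $n_1=0$, so the same Jacobi estimate applies to the $\norm{w}{\rm uw}$ contribution on $\partial_-T\cap\gif$ — bounds these two groups by $\lesssim(p^{-2}+C_p h_{\min}^{-2})\ndg{w}^2$. Collecting all bounds gives
\[
\norm{w}{A,h}^2 \le \big(1 + 4\cpoinc\alpha_{\min}^{-2} + c'\cpoinc C_p\,h_{\min}^{-2}\big)\ndg{w}^2,
\]
i.e.\ \eqref{mesh-dependent spectral gap} with $\kappa$ the reciprocal of the bracket; as $h_{\min}/p\to0$ the $\alpha_{\min}^{-2}\sim h_{\min}^{-4}p^{8}$ term dominates, so $\kappa\sim h_{\min}^4 p^{-8}$.

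I expect the main obstacle to be precisely this interior-jump/inflow estimate: one must recognise that the two individually deficient jump-control mechanisms present in $\ndg{\cdot}$ overlap appropriately over all mesh faces, and remove the $\{x=0\}$-degeneracy of the upwind term by a Jacobi-type polynomial inequality, while checking that the resulting $p$-polynomial and $h_{\min}^{-2}$ overhead stays subdominant to the unavoidable $\alpha_{\min}^{-2}\sim h_{\min}^{-4}p^{8}$ loss caused by the $A$-degenerate control of $\norm{w_y}{}$; the broken Poincar\'e--Friedrichs step itself is standard.
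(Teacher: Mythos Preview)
Your argument is correct and structurally parallel to the paper's: both bound $\norm{\sqrt{A}\nabla_h w}{}^2$ directly from the $\tfrac12\norm{w_x}{}^2+\tfrac12\norm{\alpha w_y}{}^2$ part of $\ndg{w}^2$, then invoke a broken Poincar\'e--Friedrichs inequality for $\norm{w}{}^2$, and both recognise that the two jump-control mechanisms in $\ndg{\cdot}$---the $\sigma$-penalty (vanishing when $n_1=0$) and the upwind seminorm (degenerate near $\{x=0\}$)---must be combined through a face dichotomy on $|n_1|$ versus $|n_2|$, with the unavoidable $\alpha_{\min}^{-2}\sim p^8h_{\min}^{-4}$ loss from the $y$-gradient control dictating the final rate.

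The one genuine technical difference is how the degeneracy of $|xn_2|$ on ``horizontal-like'' faces near the $y$-axis is removed. The paper excises from each such face $e$ a sub-interval $e_0$ of $x$-extent $\lesssim h_{\min}/p^2$ nearest to $\{x=0\}$, bounds $\norm{\jump{w}}{e_0}^2$ by $\tfrac12\norm{\jump{w}}{e}^2$ via the one-dimensional $L_\infty$ inverse estimate and the smallness of $|e_0|/|e|$, and uses $|xn_2|\gtrsim h_{\min}/p^2$ on $e\setminus e_0$; this is elementary and yields the explicit $L(h,p)=\barhmin^2/(1024p^2)$. Your Jacobi-weight route achieves the same $p^2h_{\min}^{-2}$ overhead more compactly, but relies on the inequality $\norm{q}{F}^2\le C_p h_F^{-1}\norm{\sqrt{|x|}\,q}{F}^2$ which you would need to justify: after parametrising $F$ by arc length and using that $|x|$ is affine along $F$ with slope $|n_2|\ge 2^{-1/2}$, it reduces (uniformly in the location of $F$) to $\int_0^1|\tilde q|^2\,\ud s\le C p^2\int_0^1 s\,|\tilde q|^2\,\ud s$ for $\tilde q\in\mathbb P_p$, which holds by norm equivalence on a finite-dimensional space with the stated $p$-dependence. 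Either way the jump/boundary overhead is $O(p^2h_{\min}^{-2})$, subdominant to $\alpha_{\min}^{-2}$, so both routes give $\kappa\sim h_{\min}^4p^{-8}$.
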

    \begin{proof}
    Let $B:= {\rm diag}(1,\alpha^2)$, recalling that $\alpha=(8\delta)^{-1}$.  We employ the splitting
    \[	\norm{\sqrt{B}\nabla_h w}{}^2
    = \norm{\sqrt{B-\nu A}\nabla_h w}{}^2
    +\norm{\sqrt{\nu A}\nabla _hw}{}^2,
    \]
    for $\nu=(2\delta)^{-1}$. Then, $B-\nu A$ is positive definite with smallest eigenvalue satisfying $\lambda_{1}^{B-\nu A}\ge \frac{\det(B-\nu A)}{1-\nu\alpha + |\nu\beta|}\ge (228\delta^2)^{-1}$; cf., Lemma \ref{eigen_est}.

    Since for a face $F=\partial T_1\cap\partial T_2$ shared by two elements $T_1,T_2\in \Th$, we have $\max_{i=1,2}h_i^{-1}\ge (\avg{h}|_F)^{-1}$, and since $64N_\partial p^2 \ge 1$, we already have the lower bound
    \begin{equation}\label{lower_all}
    \begin{aligned}
 2	\ndg{w}^2\ge &\   \min_\Omega\frac{1}{228\delta^2}  \norm{\nabla_h w}{}^2+\norm{\sqrt{\nu A}\nabla _hw}{}^2 +\norm{n_1\avg{h}^{-\frac{1}{2}}\jump{w}}{\gint }^2\\
& +\su\norm{\sqrt{n_2 x}\jump{w}}{\partial_-T\cap \gint }^2+   \norm{\sqrt{|n_2 x|}\jump{w}}{ \gif\cup\Gamma_+ }^2.
 \end{aligned}
    \end{equation}

To conclude the proof it is sufficient to find suitable $ L(h,p)>0$, such that
\[
\begin{aligned}
&\su \Big( \norm{n_1\avg{h}^{-\frac{1}{2}}\jump{w}}{\partial T\cap \gint }^2 +\norm{\sqrt{n_2 x}\jump{w}}{\partial_-T\cap \gint }^2\Big)+   \norm{\sqrt{|n_2 x|}w}{\gif\cup\Gamma_+ }^2\\
\ge &\  L(h,p) \Big(\norm{\avg{h}^{-\frac{1}{2}}\jump{w}}{ \gint }^2 +\norm{h^{-\frac{1}{2}}w}{ \gif\cup\Gamma_+ }^2 \Big).
\end{aligned}
\]
To that end, we first observe that the left-hand side sum runs through all the internal faces in the mesh. Indeed, each element face $e$ in $\gint$ is either inflow or outflow. If it is outflow, for a given element, it is inflow of the element ``downstream''.

First consider a face $e\subset\gint$, with $|n_2|< \frac{1}{2}$, i.e., a ``vertical-like'' face. Then, $n_1^2=1-n_2^2\ge \frac{3}{4}$ and, so, $\norm{n_1\avg{h}^{-\frac{1}{2}}\jump{w}}{e}^2\ge \frac{3}{4} \norm{\avg{h}^{-\frac{1}{2}}\jump{w}}{e}^2 $.

 Now, we consider  ``horizontal-like'' faces for which $|n_2|\ge \frac{1}{2
}$.
We set $\barhmin:=\frac{1}{2}\min\{1, h_{\min}\}$, and we separate two cases: a) $|x|\ge2\barhmin$ for all $(x,y)\in e$, i.e., ``sufficiently far'' from the $y$-axis, and b) $|x|< 2\barhmin$ for some $(x,y)\in e$, that is ``very close'' or crossing the $y$-axis.

For a), we have $|xn_2|\ge \barhmin$, to conclude $\norm{\sqrt{|n_2 x|}\jump{w}}{e }^2\ge 2 \barhmin^2\norm{\avg{h}^{-\frac{1}{2}}\jump{w}}{e}^2$.

For b), (that is, $|x|< 2\barhmin$ for some $(x,y)\in e$ and $|n_2|\ge \frac{1}{2}$, i.e., $e$ is a ``horizontal-like'' face and is ``very close'' or crosses the $y$ axis,) we begin by considering the near-most piece of $e$ to the $y$-axis, denoted by $e_0$,
 for which
$ |x|
\le  \barhmin(256p^2)^{-1}$, for all $(x,y)\in e_0$. (If it so happens that $ |x|
\ge  \barhmin(256p^2)^{-1}$, for all $(x,y)\in e$, then $e=e_0$.)

Then, we have
\begin{equation}\label{lower_b}
\norm{\sqrt{|n_2 x|}\jump{w}}{e}^2\ge  \norm{\sqrt{|n_2 x|}\jump{w}}{e\backslash e_0}^2 \ge \frac{\barhmin}{512p^2}\norm{\jump{w}}{e\backslash e_0}^2.
\end{equation}
Further, recalling the one-dimensional inverse estimate $\norm{v}{L_\infty(e)}^2\le 32p^2|e|^{-1}\norm{v}{L_2(e)}^2$, for $v$ polynomial of degree $p$ on $e$, (see, \cite{MR839312} or, e.g., \cite[Eq.~(3.6.4)]{schwab},) we have
\[
\begin{aligned}
\norm{\jump{w}}{e\backslash e_0}^2
= &\
\norm{\jump{w}}{e}^2-\norm{\jump{w}}{e_0}^2
\ge
\norm{\jump{w}}{e}^2-|e_0|\norm{\jump{w}}{L_\infty(e)}^2\\
\ge &\
\norm{\jump{w}}{e}^2-|e_0|32p^2 |e|^{-1}\norm{\jump{w}}{e}^2,
\end{aligned}
\]
respectively.  Upon noting that  $|e_0|\le \barhmin(64p^2)^{-1}$ by construction since $n_2\ge \frac{1}{2}$.
 Combining the last two bounds, we conclude $\norm{\jump{w}}{e\backslash e_0}^2 \ge \frac{1}{2}\norm{\jump{w}}{e}^2$, which in conjunction with \eqref{lower_b}, gives
 \[
 \norm{\sqrt{|n_2 x|}\jump{w}}{e}^2\ge \frac{\barhmin}{1024 p^2}\norm{\jump{w}}{e}^2\ge \frac{\barhmin^2}{1024 p^2}\norm{\avg{h}^{-\frac{1}{2}}\jump{w}}{e}^2.
 \]
 Note that the above argument remains valid also for boundary faces $e\subset \gif\cup\Gamma_+$.

  From the above, it is sufficient to select
$
 L(h,p):= \barhmin^2(1024p^2)^{-1}
$,
upon noting the bound $\barhmin^2(1024p^2)^{-1}\le \max\{\frac{3}{4},2\barhmin^2\}$. Thus, \eqref{lower_all} gives
\[
\begin{aligned}
2	\ndg{w}^2\ge &\   \min\Big \{\min_\Omega\frac{1}{228\delta^2},  \frac{\barhmin^2}{1024p^2}\Big\} \Big( \norm{\nabla_h w}{}^2 +\norm{\avg{h}^{-\frac{1}{2}}\jump{w}}{\gint \cup\gif\cup\Gamma_+}^2 \Big)\\
&+\norm{\sqrt{\nu A}\nabla _hw}{}^2
\end{aligned}
\]

Further, it is known (see  {\cite[Theorem 1.6]{botti2025sobolev} for details}) that the ``broken''  {Poincar\'e} inequality (spectral gap)  holds in this case:
\begin{equation}\label{broken_poinc}
\norm{w}{}^2 \leq C_{\rm bPF} \big(  \norm{\nabla_h w}{}^2 +\norm{\avg{h}^{-\frac{1}{2}}\jump{w}}{\gint \cup\gif\cup\Gamma_+}^2 \big),
\end{equation}
for a constant $C_{\rm bPF}>0$, independent of $w$ and of the discretisation parameters, but dependent on the length of $\gif\cup\Gamma_+$. Then, \eqref{mesh-dependent spectral gap} follows by setting
\[
\kappa\equiv\kappa(h,p):=\frac{1}{2C_{\rm bPF}}  \min\Big \{\min_\Omega\frac{1}{228\delta^2},  \frac{\barhmin^2}{1024p^2}\Big\}.
\]

Thus, as $h_{\min}/p\to 0$, we conclude $\kappa= \min_{\Omega}(C_{\rm bPF} 228\delta^{2})^{-1}\sim  h_{\min}^4 p^{-8}$.
    \end{proof}

 {
\begin{remark}
Note that $\ndg{\cdot}$ is, indeed, a norm in $\mathcal{V}_{h}$ since it majorises the norm $\norm{\cdot}{A,h}$.
\end{remark}
}

We are now ready to prove the first main result of this work.
\begin{theorem}[Asymptotic numerical hypocoercivity of the semi-discrete scheme]\label{main_SD}
    Let $U \in H^1(I; {\mathcal{V}}_h)$ be the solution of the semi-discrete scheme \eqref{def: seim-discrete bilinear form} for $f=0$. Then, we have
    \begin{equation}\label{hypoSUPG_stab_semidsicrete}
    \norm{U(t_f)}{ {A,h}}\le e^{-\kappa t_f/4} \norm{U(0)}{ {A,h}},
    \end{equation}
 {where $\kappa\equiv \kappa(h,p)\sim h^4_{\min} p^{-8}$, as $h_{\min}/ p\to 0$}.
\end{theorem}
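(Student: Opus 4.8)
The plan is to combine the coercivity-in-the-enhanced-norm of Proposition~\ref{stab_semi} with the mesh-dependent spectral gap of Lemma~\ref{lemma:mesh-dependent spectral gap}, applied to the natural test function. First I would set $f=0$ in the semi-discrete scheme \eqref{def: seim-discrete bilinear form} and, for each fixed $t$, choose the test function $V$ given by \eqref{semi: test function} built from $U(t)\in V_h$; this is admissible since $V\in C^0(I;V_h)$ by the remark following \eqref{semi: test function}. Since $U$ solves \eqref{def: seim-discrete bilinear form} with $f=0$, we have $\inner{U_t}{V}{}+a_{\rm dG}(U,V)=0$ a.e.~in $t$, so Proposition~\ref{stab_semi} immediately yields, a.e.~in $t$,
\begin{equation*}
0=\inner{U_t}{V}{}+a_{\rm dG}(U,V)\ge \frac12\frac{\dee}{\dt}\norm{U}{A,h}^2+\frac14\ndg{U}^2.
\end{equation*}

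Next I would invoke the mesh-dependent spectral gap \eqref{mesh-dependent spectral gap}, namely $\ndg{U}^2\ge\kappa\norm{U}{A,h}^2$ for the constant $\kappa=\kappa(h,p)>0$ of Lemma~\ref{lemma:mesh-dependent spectral gap}, valid since $U(t)\in V_h$; note that here $U\in H^1(I;V_h)$ has no inter-element jumps within $V_h$, so $\norm{U}{A,h}=\norm{U}{A}$ pointwise in $t$. Substituting gives the differential inequality
\begin{equation*}
\frac12\frac{\dee}{\dt}\norm{U}{A}^2+\frac{\kappa}{4}\norm{U}{A}^2\le 0,
\end{equation*}
which, after multiplying by $2$, reads $\frac{\dee}{\dt}\norm{U}{A}^2\le -\frac{\kappa}{2}\norm{U}{A}^2$. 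Applying Gr\"onwall's inequality on $(0,t_f)$ yields $\norm{U(t_f)}{A}^2\le e^{-\kappa t_f/2}\norm{U(0)}{A}^2$, and taking square roots gives precisely \eqref{hypoSUPG_stab_semidsicrete}.

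There is no genuine obstacle remaining: all the work has been done in Lemmata~\ref{lem:semi:U}--\ref{lem:semi:hypo}, Proposition~\ref{stab_semi}, and Lemma~\ref{lemma:mesh-dependent spectral gap}. The only points requiring a small amount of care are (i) verifying that the map $t\mapsto\norm{U(t)}{A}^2$ is absolutely continuous so that the pointwise differential inequality integrates correctly --- this follows from $U\in H^1(I;V_h)$ and the fact that $\norm{\cdot}{A}^2$ is a fixed quadratic form on the finite-dimensional space $V_h$; and (ii) bookkeeping the constant, ensuring the factor $\kappa/4$ in Proposition~\ref{stab_semi} propagates to the stated rate $e^{-\kappa t_f/4}$ in the norm (not squared). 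I would present this as a short, clean deduction, emphasising that the essential mechanism --- coercivity in a norm strong enough to admit a (discretisation-dependent) Poincar\'e inequality --- is exactly the numerical manifestation of hypocoercivity.
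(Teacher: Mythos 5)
Your proposal is correct and follows exactly the paper's argument: combine Proposition~\ref{stab_semi} with the spectral gap of Lemma~\ref{lemma:mesh-dependent spectral gap} to obtain $\frac{1}{2}\frac{\dee}{\dt}\norm{U}{A}^2+\frac{\kappa}{4}\norm{U}{A}^2\le 0$ and integrate in time. Your explicit Gr\"onwall step and constant bookkeeping (rate $\kappa/2$ for the squared norm, hence $\kappa/4$ for the norm) simply spell out what the paper compresses into ``the result follows by integration.''
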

\begin{proof}
	From Lemma \ref{lemma:mesh-dependent spectral gap} and Proposition \ref{stab_semi}, we have, respectively,
	\begin{equation}\label{eq: semi-discrete dG hypocoercivity}
		\begin{aligned}
			\frac{1}{2}\frac{\dee}{\dt}\norm{U}{ {A,h}}^2+ \frac{\kappa}{4} \norm{U}{ {A,h}}^2\le 		\frac{1}{2}\frac{\dee}{\dt}\norm{U}{ {A,h}}^2+	\frac{1}{4}\ndg{U}^2
			\le   \inner{U_t}{V}{}  + a(U,V)= 0.
		\end{aligned}
	\end{equation}
	The result follows by integration with respect to $t$ in the interval $I$, noting that $U(0)\in  {\mathcal{V}}_h\subset H^1(\Omega,\Th)$.
\end{proof}
The above result shows that the classical discontinuous Galerkin method retains an ``asymptotically preserving'' hypocoercivity structure at the discrete level, in the sense that the spectral gap becomes significant for $t_f>\kappa^{-1}\sim p^8h_{\min}^{-4}$. This is, perhaps, expected, due to the bounded domain and the boundary conditions. We refer to the preservation of this structure by a numerical method as \emph{numerical hypocoercivity}.

\begin{remark}\label{remark_stab}
To highlight the importance of $V_3$ in the present context, we discuss briefly the effect of omitting this term. Indeed, upon testing with $V_1+V_2$ only, we return to the inf-sup stability analysis of the classical dG method in a stronger-than-energy, SUPG-like norm; we refer to \cite{ayuso_marini,cdgh16,MR3672375}, for various, related results in this context, extending the seminal, classical result presented in \cite{johnson_pitkaranta}. Combining the bounds in the bounds in Lemmata \ref{lem:semi:U}, \ref{lem:semi:supg} only, we deduce
  \begin{equation*}
	\inner{U_t}{V_1+V_2}{} + a_{\rm dG}(U,V_1+V_2)
	\geq
	\frac{1}{2}\frac{\dee}{\dt} \norm{U}{}^2
	+
	\frac{7}{8}\norm{U}{\rm dG}^2
+
	\frac{1}{4} \sum_{T \in \Th} \norm{ \sqrt{\tau}(U_t + xU_y) }{T}^2.
\end{equation*}
Now,  {it is not known if it is possible to bound $\big(\norm{U}{\rm dG}^2
+
\frac{1}{4} \sum_{T \in \Th} \norm{ \sqrt{\tau}(U_t + xU_y) }{T}^2\big)^{1/2}$ by $\norm{U}{}$ from below in general},  cf., \eqref{broken_poinc}, given that $\|U_y\|$ is \emph{not} present in $\norm{U}{\rm dG}^2$. Thus, the energy argument for $f=0$ becomes
\[
	\frac{1}{2}\frac{\dee}{\dt}\norm{U}{}^2
	\le   \inner{U_t}{V_1+V_2}{}  + a(U,V_1+V_2)= 0,
\]
which, in turn, implies just $ \norm{U(t_f)}{}\le  \norm{U(0)}{}$; cf., \eqref{hypoSUPG_stab_semidsicrete}.
\end{remark}

\section{Asymptotic numerical hypocoercivity of the space-time method}\label{sec:fullDiscrete}
The respective result for the fully-discrete scheme \eqref{eq:dG_FEM} providing the space-time discrete solution $U\in  {\mathcal{V}}_{h,k}$, follows by extending the results of the previous section. To that end, we consider the test function $V$,  {which is defined in the fully-discrete space-time dG space $\mathcal{V}_{h,k}$}, along with the $V_i$, $i=1,2,3$, as:
\begin{equation}\label{FDV}
V: = U + \sum_{I_n \in \mathcal{I}} \su\tau_{T,n} (x U_y + U_t)|_{I_n\times T}
+ \sum_{I_n \in \mathcal{I}} \su -\Div(A \nabla U)|_{I_n\times T} \equiv V_1+ V_2 +V_3,
\end{equation}
upon redefining $\tau$ and $\delta$ as follows. We set $\tau|_{I_n\times T}:= \tau_{T,n}$, with
\begin{equation}\label{def: tau_st}
\begin{split}
 \tau_{T,n}
    := &
    \min \Big\{\tau_T,
 \frac{k_n}{64(q+1)^2}
        \Big\},
    \end{split}
\end{equation}
for all $T \in \Th$, $n = 1,\dots, N$, with $\tau_T$ as in \eqref{def: tau}. Note that  $\tau \sim \min\{ h^2/p^4, k/q^2\}$,  as $h/p\to 0$ and as $k/q\to 0$, with $k=\max_{n=1,\dots,N} k_n$.

Also, we modify (slightly) the definition of the (space-time) element-wise constant function $\delta$, which is now given by $\delta|_{I_n\times T}:=\delta_{T,n}$, $T\in\Th$, $n=1,\dots, N$, where
\begin{equation}\label{def:deltaFD_def}
	\begin{split} \delta_{T,n}:=&\ \max\bigg\{1,\  \frac{\cinvT^2p^4}{h_T^2}\max\Big\{4+\frac{\cinv^2}{8
			\cinvT^2},\frac{(\xn)^2h_T}{p^2},\sqrt{\mathcal{R}_{T,n}}\Big\}\bigg\},
	\end{split}
\end{equation}
with
\[
\begin{aligned}
\mathcal{R}_{T,n}:=&\ 2h_T^2 p^{-4} \tau_{T,n}^{-1}+1040N_\partial (n_1^FC_\rho\cinvT^{-1})^2\\
&+ 8(\cinv	\cinvT^{-1}\xn)^2p^{-2} h_T+2 h_T^2 (q+1)^2p^{-4} k_n^{-1}.
\end{aligned}
\]
Note that we have
$\delta \sim \max\{ p^4/h^2, p^2q/(h\sqrt{k})\}$,  as $h/p\to 0$ and as $k/q\to 0$. In particular, for later reference, we have $\delta_{T,n}\ge 1$ and
\[
\delta_{T,n}\ge \frac{\cinvT^2p^4}{h_T^2}\sqrt{\mathcal{R}_{T,n}}\ge \frac{\sqrt{2}\cinvT^2p^2(q+1)}{h_T\sqrt{k_n}},
\]
and, so, on each $I_n\times T$, we have
\begin{equation}\label{final_bound_alpha}
\alpha=(8\delta_{T,n})^{-1} <\min\Big\{ 1,  \frac{h_T\sqrt{k_n}}{\cinvT^2p^2(q+1)}\Big\}.
\end{equation}

\subsection{Stability estimates}
We prove lower bounds for $B_n$, from \eqref{B_n}, when tested against each one of $V_i$, $i=1,2,3$.
\begin{lemma}\label{eq:energy_st}
    Let $U \in  {\mathcal{V}}_{h,k}$. Then, for $\sigma$ as given in \eqref{eq:sigma_def}, we have
    \begin{equation*}
        \begin{split}
            B_n(U,V_1)
            \geq &\  \frac{1}{2} \left( \norm{\tjump{U}_{n-1}}{}^2 + \norm{U(t_n^-)}{}^2  - \norm{U(t_{n-1}^-)}{}^2\right)
            \\
            &+\int_{I_n}   \big(
            \frac{7}{8}\norm{U}{\rm dG}^2
            +\frac{1}{16}\norm{U}{\rm uw}^2\big) \dt .
        \end{split}\end{equation*}
\end{lemma}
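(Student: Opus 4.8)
The plan is to mimic the proof of Lemma~\ref{lem:semi:U}, but now carefully track the additional time-derivative and time-jump contributions that appear in $B_n$ as defined in \eqref{B_n}. The key new ingredient is the discontinuous-in-time structure: testing against $V_1=U$, the term $\int_{I_n}(U_t,U)\,{\rm dt}+(\tjump{U}_{n-1},U(t_{n-1}^+))$ should be handled by the standard space-time dG identity. Concretely, I would write $\int_{I_n}(U_t,U)\,{\rm dt}=\tfrac12\norm{U(t_n^-)}{}^2-\tfrac12\norm{U(t_{n-1}^+)}{}^2$, and then combine with the jump term using $(\tjump{U}_{n-1},U(t_{n-1}^+))=\norm{U(t_{n-1}^+)}{}^2-(U(t_{n-1}^-),U(t_{n-1}^+))$. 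Applying Young's inequality to the cross term, $(U(t_{n-1}^-),U(t_{n-1}^+))\le\tfrac12\norm{U(t_{n-1}^-)}{}^2+\tfrac12\norm{U(t_{n-1}^+)}{}^2$, one obtains the telescoping-friendly lower bound $\tfrac12\norm{\tjump{U}_{n-1}}{}^2+\tfrac12\norm{U(t_n^-)}{}^2-\tfrac12\norm{U(t_{n-1}^-)}{}^2$ after expanding $\norm{\tjump{U}_{n-1}}{}^2=\norm{U(t_{n-1}^+)}{}^2-2(U(t_{n-1}^+),U(t_{n-1}^-))+\norm{U(t_{n-1}^-)}{}^2$; this is precisely the first line of the claimed estimate.

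For the spatial part, the $\int_{I_n}a_{\rm dG}(U,U)\,{\rm dt}$ contribution is treated exactly as in Lemma~\ref{lem:semi:U}, since $a_{\rm dG}$ acts only in the spatial variables and the estimates there were pointwise-in-$t$. I would integrate over $I_n$ the pointwise inequality
\[
a_{\rm dG}(U,U)\ge \tfrac78\norm{U}{\rm dG}^2+\tfrac{1}{16}\norm{U}{\rm uw}^2,
\]
which follows from the identity for $a_{\rm dG}(U,U)$ established in the proof of Lemma~\ref{lem:semi:U} (using \eqref{eq:uw_id} and the flux estimate \eqref{eq:standard_flux_estimate} with $\varepsilon=8$, together with the choice of $\sigma$ in \eqref{eq:sigma_def}). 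Adding the temporal bound from the previous paragraph gives the stated result directly.

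The only genuinely new point relative to the semi-discrete case is the bookkeeping of the time-jump and endpoint terms, and in particular making sure the cross term $(U(t_{n-1}^-),U(t_{n-1}^+))$ is split so that the resulting bound telescopes cleanly over $n$; this is the standard space-time dG "energy identity" argument and is the main (though mild) obstacle. Everything else is a verbatim re-use of Lemma~\ref{lem:semi:U}. I do not expect any difficulty from the fact that $U\in V_{h,k}$ rather than $H^1(I;V_h)$, since on each slab $I_n$ the function $U|_{I_n}$ is smooth in time and the manipulations above are valid.
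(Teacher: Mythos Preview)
Your proposal is correct and follows the same approach as the paper: the spatial part reuses Lemma~\ref{lem:semi:U} pointwise in $t$, and the time terms combine to give the stated expression. One minor point: the time-term computation
\[
\int_{I_n}(U_t,U)\,\mathrm{d}t+(\tjump{U}_{n-1},U(t_{n-1}^+))
=\tfrac12\norm{U(t_n^-)}{}^2+\tfrac12\norm{U(t_{n-1}^+)}{}^2-(U(t_{n-1}^-),U(t_{n-1}^+))
\]
is an exact \emph{identity} equal to $\tfrac12\norm{\tjump{U}_{n-1}}{}^2+\tfrac12\norm{U(t_n^-)}{}^2-\tfrac12\norm{U(t_{n-1}^-)}{}^2$ (as your jump expansion shows), so Young's inequality is not needed---indeed, applying Young as you wrote would drop the $\tfrac12\norm{\tjump{U}_{n-1}}{}^2$ term.
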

\begin{proof}
    Since $V_1=U$, $B_n(U,U)$ includes $\int_{I_n}a_{\rm dG}(U,U) \dt$, which has essentially been treated in Lemma \ref{lem:semi:U}.
    For the remaining, time discretisation terms, we have
    \begin{equation*}
        \int_{I_n} \inner{U_t}{U}{}\dt
        +
        \inner{\tjump{U}_{n-1}}{U(t_{n-1}^+)}{}
        =
        \frac{1}{2} \left( \norm{\tjump{U}_{n-1}}{}^2 + \norm{U(t_n^-)}{}^2 - \norm{U(t_{n-1}^-)}{}^2 \right),
    \end{equation*}
and the result already follows.
\end{proof}
\begin{lemma}\label{eq:SUPG_st}
    Let $U \in  {\mathcal{V}}_{h,k}$ and $\tau$ as in \eqref{def: tau_st}. Then, on each $I_n$, we have
    \begin{equation}
        B_n(U,V_2)
        \geq
        \int_{I_n} \Big( \frac{3}{16} \sum_{T \in \Th} \norm{ \sqrt{\tau}(U_t + xU_y) }{T}^2 - \frac{1}{16} \norm{U}{\rm dG}^2 \Big) \dt
        - \frac{1}{16}\norm{\tjump{U}_{n-1}}{}^2.
    \end{equation}
\end{lemma}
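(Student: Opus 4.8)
The plan is to reuse the semi-discrete argument of Lemma~\ref{lem:semi:supg} pointwise in $t\in I_n$, integrate it over $I_n$, and then handle the one extra ingredient present in $B_n$ but absent from the semi-discrete bilinear form: the time-jump term $\inner{\tjump{U}_{n-1}}{V_2(t_{n-1}^+)}{}$. Throughout I write $W|_{I_n\times T}:=(U_t+xU_y)|_{I_n\times T}$, so that $V_2=\tau W$ with $\tau|_{I_n\times T}=\tau_{T,n}$ given by \eqref{def: tau_st}; recall from the remark after \eqref{semi: test function} that $V_2\in V_{h,k}$, since on $I_n$ the function $U$ is a polynomial in $t$ and $\tau_{T,n}$ is constant on $I_n\times T$.

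First I would observe that, for a.e.~fixed $t\in I_n$, the chain of estimates in the proof of Lemma~\ref{lem:semi:supg} goes through verbatim with $\tau_T$ replaced by the space-time-constant $\tau_{T,n}$: every intermediate bound there uses only the spatial inverse inequalities \eqref{trace_H1_inv} and the algebraic fact that $\norm{\tau W}{T}^2=\tau_{T,n}\norm{\sqrt{\tau}W}{T}^2\le\tau_T\norm{\sqrt{\tau}W}{T}^2$, while the definition \eqref{def: tau} of $\tau_T$ keeps the sum of the resulting ``bad'' coefficients at most $3/4$. This delivers $\inner{U_t}{V_2}{}+a_{\rm dG}(U,V_2)\ge\frac14\su\norm{\sqrt{\tau}(U_t+xU_y)}{T}^2-\frac1{16}\norm{U}{\rm dG}^2$ for a.e.~$t\in I_n$; integrating over $I_n$ produces the volume part of the claim with coefficient $1/4$.

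Next I would bound the time-jump term. For fixed $(x,y)$ the map $t\mapsto W(t,x,y)$ is a polynomial of degree at most $q$ on $I_n$, so the one-dimensional inverse estimate \eqref{time_trace_inv}, applied at the left endpoint (via the reflection $t\mapsto t_{n-1}+t_n-t$, which maps $I_n$ to itself and swaps its endpoints), gives $\norm{W(t_{n-1}^+)}{T}^2\le(q+1)^2k_n^{-1}\int_{I_n}\norm{W}{T}^2\dt$. Multiplying by $\tau_{T,n}^2$, summing over $T\in\Th$, and using the cap $\tau_{T,n}\le k_n/(64(q+1)^2)$ built into \eqref{def: tau_st}, this becomes
\[
\norm{V_2(t_{n-1}^+)}{}^2=\su\tau_{T,n}^2\norm{W(t_{n-1}^+)}{T}^2\le\frac1{64}\int_{I_n}\su\norm{\sqrt{\tau}W}{T}^2\dt,
\]
after which Cauchy--Schwarz and Young's inequality with parameter $8$ give $|\inner{\tjump{U}_{n-1}}{V_2(t_{n-1}^+)}{}|\le\frac1{16}\norm{\tjump{U}_{n-1}}{}^2+\frac1{16}\int_{I_n}\su\norm{\sqrt{\tau}W}{T}^2\dt$. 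Adding this to the volume estimate and using $\frac14-\frac1{16}=\frac3{16}$ yields the stated bound.

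The only genuinely new obstacle compared with Lemma~\ref{lem:semi:supg} is controlling the pointwise-in-time trace $V_2(t_{n-1}^+)$ of the SUPG test function; this is exactly what motivates, and is resolved by, the additional restriction $\tau_{T,n}\le k_n/(64(q+1)^2)$ in \eqref{def: tau_st}, calibrated precisely so that the jump term is absorbed by one sixteenth of the SUPG volume term, leaving the coefficient $3/16$.
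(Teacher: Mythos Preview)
Your proposal is correct and follows essentially the same approach as the paper: reuse the semi-discrete estimate of Lemma~\ref{lem:semi:supg} pointwise in $t$ (valid since $\tau_{T,n}\le\tau_T$), integrate over $I_n$, and then absorb the time-jump term via the temporal inverse inequality \eqref{time_trace_inv} together with the cap $\tau_{T,n}\le k_n/(64(q+1)^2)$ from \eqref{def: tau_st}. The only cosmetic difference is that the paper writes the intermediate bound as $4\tau_{T,n}(q+1)^2k_n^{-1}\le \tfrac{1}{16}$ directly, whereas you first bound $\norm{V_2(t_{n-1}^+)}{}^2$ and then apply Young's inequality; the arithmetic is identical.
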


\begin{proof}
Test $B_n(U,\cdot)$ against $V_2=\tau \utr$, with $\utr|_{I_n\times T}:=(x U_y + U_t)|_{I_n\times T}$, $n=1,\dots, N$, $T\in\Th$, (noting that $W$ is now a space-time polynomial on each space-time element $I_n\times T$,) with
 $\tau$ as given in \eqref{def: tau_st}. Note that this way we retain the validity of the bound in Lemma \ref{lem:semi:supg} and, thus,
    \begin{equation*}
\begin{split}
        B_n(U,V_2)
        \geq&
        \int_{I_n} \Big(
            \frac{1}{4} \sum_{T \in \Th} \norm{ \sqrt{\tau}W }{T}^2
            - \frac{1}{16} \|U\|_{dG}^2
        \Big) \dt
        +
        \inner{\tjump{U}_{n-1}}{\tau W( t_{n-1}^+)}{}.
\end{split}
    \end{equation*}
    Employing \eqref{time_trace_inv}, and standard estimation steps, we have
    \begin{equation*}\begin{split}
        \inner{\tjump{U}_{n-1}}{V_2( t_{n-1}^+)}{}
        &
         \geq
        -\su 4 \tau_{T,n} (q+1)^2k_n^{-1}\int_{I_n} \norm{ \sqrt{\tau}W}{T}^2
        \dt
        - \frac{1}{16} \norm{\tjump{U}_{n-1}}{}^2.
    \end{split}\end{equation*}
    Since $ 64\tau_{T,n} (q+1)^2k_n^{-1} \leq 1$, due to the  choice of $\tau$ in \eqref{def: tau_st}, the result follows.
\end{proof}

\begin{lemma} \label{eq:hypo_st}
    Let $U \in  {\mathcal{V}}_{h,k}$ and $\delta$ be as given in \eqref{def:deltaFD_def}. Then, on each $I_n$, we have
    \begin{equation*}
        \begin{split}
            B_n(U,V_3)\ge &\
            \frac{1}{2}\Big( \norm{\tjump{\sqrt{A}\nabla_h U}_{n-1}}{}^2
            \!+\!	\norm{\sqrt{A}\nabla_h U(t_n^-)}{}^2 \! -\!	\norm{\sqrt{A}\nabla_h U(t_{n-1}^-)}{}^2   \Big)\\
            &	- \frac{1}{16} \norm{\tjump{U}_{n-1}}{}^2 +\int_{I_n}\!\! \bigg(\frac{1}{2}
                \sum_{T \in \Th} \Big(
             \norm{\sqrt{A}\nabla U_x}{T}^2
            +
               \norm{ \sqrt{x\normal_2 A}\nabla U }{\partial_+ T}^2
                \\
              &
             -
            \frac{1}{8} \norm{ \sqrt{\tau}(U_t + xU_y) }{T}^2
                +
                \frac{\alpha^2}{3} \norm{U_y}{T}^2
            -
                \frac{5}{4} \norm{U_x}{T}^2
            \Big)
            \!-\!
            \frac{1}{16}  \|U\|_{\rm dG}^2 \bigg)\!\ud t .
        \end{split}\end{equation*}
\end{lemma}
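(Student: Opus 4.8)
The plan is to replay the proof of Lemma~\ref{lem:semi:hypo} on a single time-slab $I_n$, integrated in $t$, and to handle separately the two features of $B_n$ (cf.\ \eqref{B_n}) that are absent in the semi-discrete setting: the time integral of $\tfrac12\tfrac{\dee}{\dt}\norm{\sqrt A\nabla_h U}{}^2$ and the time-jump term $\inner{\tjump{U}_{n-1}}{V_3(t_{n-1}^+)}{}$, with $V_3=-\Div_h(A\nabla_h U)$. Since $A$ is constant in $t$ on each space-time element $I_n\times T$, the pointwise-in-$t$ identities and estimates of Lemma~\ref{lem:semi:hypo} apply verbatim to the integrand $(U_t,V_3)+a_{\rm dG}(U,V_3)$, with $\tau$ and $\delta$ there replaced by $\tau_{T,n}$ and $\delta_{T,n}$; crucially, the two algebraic identities that close that proof are independent of the value of $\delta$, so they remain valid. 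Integrating over $I_n$ replaces $\int_{I_n}\tfrac12\tfrac{\dee}{\dt}\norm{\sqrt A\nabla_h U}{}^2\dt$ by $\tfrac12\bigl(\norm{\sqrt A\nabla_h U(t_n^-)}{}^2-\norm{\sqrt A\nabla_h U(t_{n-1}^+)}{}^2\bigr)$.

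Next I would integrate the time-jump term by parts element-wise,
\[
\inner{\tjump{U}_{n-1}}{V_3(t_{n-1}^+)}{}
=\su\Bigl(\inner{A\nabla\tjump{U}_{n-1}}{\nabla U(t_{n-1}^+)}{T}-\inner{\tjump{U}_{n-1}}{\normal\cdot A\nabla U(t_{n-1}^+)}{\partial T}\Bigr),
\]
and combine its volume part with the endpoint expression from Step~1. Applying the standard discrete-in-time identity used in Lemma~\ref{eq:energy_st}, now componentwise to $b:=\sqrt A\nabla_h U$, produces exactly the leading term $\tfrac12\bigl(\norm{\tjump{\sqrt A\nabla_h U}_{n-1}}{}^2+\norm{\sqrt A\nabla_h U(t_n^-)}{}^2-\norm{\sqrt A\nabla_h U(t_{n-1}^-)}{}^2\bigr)$ of the statement (with all $A$'s, including in the term at $t_{n-1}^-$, understood with the slab-$I_n$ values). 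For the boundary part I would use the trace inverse inequality \eqref{trace_H1_inv}, the elementary bound $\norm{A\nabla U}{T}\le\sqrt{|A|}\,\norm{\sqrt A\nabla U}{T}$, Young's inequality, and finally the time inverse inequality \eqref{time_trace_inv} at the left endpoint $t_{n-1}^+$, to obtain
\[
\su\inner{\tjump{U}_{n-1}}{\normal\cdot A\nabla U(t_{n-1}^+)}{\partial T}
\le\frac1{16}\norm{\tjump{U}_{n-1}}{}^2
+\su\frac{c\,\cinvT^4p^4(q+1)^2|A|}{h_T^2k_n}\int_{I_n}\norm{\sqrt A\nabla U}{T}^2\dt
\]
for an absolute constant $c$. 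The first term on the right is precisely the $-\tfrac1{16}\norm{\tjump{U}_{n-1}}{}^2$ appearing in the statement.

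It then remains to collect: the $\int_{I_n}$ of the Lemma~\ref{lem:semi:hypo} integrand, the telescoped $A$-energy terms, and $-\tfrac1{16}\norm{\tjump{U}_{n-1}}{}^2$. The only point needing care is that the coefficient multiplying $-\su\norm{\sqrt A\nabla U}{T}^2$ must now dominate, on top of every contribution already present in the semi-discrete proof (with $\tau_T$ replaced by the possibly smaller $\tau_{T,n}$), the extra term $c\,\cinvT^4p^4(q+1)^2|A|h_T^{-2}k_n^{-1}$ coming from the time-jump boundary estimate. Using $|A|\le2\alpha=(4\delta_{T,n})^{-1}$ this turns into a quadratic inequality for $\delta_{T,n}$, and it is precisely to meet it that \eqref{def:deltaFD_def} takes $\mathcal R_{T,n}$ with the enlarged/added summands $2h_T^2p^{-4}\tau_{T,n}^{-1}$ and $2h_T^2(q+1)^2p^{-4}k_n^{-1}$; once this holds, the absorption closes exactly as in Lemma~\ref{lem:semi:hypo} through the $\delta$-independent identities $\beta-\tfrac{2\alpha^2}3-\tfrac{\delta\beta^2}\alpha-\delta\gamma=\tfrac{\alpha^2}6$ and $\tfrac38+2\delta\alpha=\tfrac58$.

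I expect the main obstacle to be exactly this last step: the time-jump boundary term generates a $\norm{\sqrt A\nabla U(t_{n-1}^+)}{}^2$ contribution that cannot be matched against the available (telescoped) endpoint $A$-energy, forcing one to convert it, via the time inverse inequality, into a volume term $\int_{I_n}\norm{\sqrt A\nabla U}{}^2$ and then to make $\delta_{T,n}$ large enough to swallow it — which is precisely the role of the additional summand of $\mathcal R_{T,n}$ in \eqref{def:deltaFD_def}. Everything else is a routine, if tedious, slab-by-slab repetition of the spatial estimates of Lemma~\ref{lem:semi:hypo}.
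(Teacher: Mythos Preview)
Your proposal is correct and follows essentially the same route as the paper: integrate the time-jump term by parts element-wise, combine its volume part with the $\int_{I_n}(A\nabla_h U_t,\nabla_h U)\dt$ term via the discrete-in-time identity to produce the telescoped $A$-energy, and bound the $\partial T$ contribution by the trace inverse inequality together with the time inverse inequality \eqref{time_trace_inv} at $t_{n-1}^+$, yielding the $-\tfrac{1}{16}\norm{\tjump{U}_{n-1}}{}^2$ term and an extra $\int_{I_n}\norm{\sqrt A\nabla U}{T}^2$ term absorbed by the enlarged $\delta_{T,n}$ through the additional summand of $\mathcal R_{T,n}$. The remaining spatial estimates are, as you say, a verbatim replay of Lemma~\ref{lem:semi:hypo} with $\tau_T,\delta_T$ replaced by $\tau_{T,n},\delta_{T,n}$.
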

\begin{proof}
On each $I_n$, we have
\begin{equation*}\begin{split}
		B_n(U,V_3)
		=&
		-\int_{I_n} \left(
		\inner{U_t}{ \Div_h ( A \nabla_h U)}{}
		+
		a_{\rm dG}(U, \Div_h ( A \nabla_h U))
		\right) \dt
		\\
		&
		- \inner{\tjump{U}_{n-1}}{\Div_h ( A \nabla_h U)( t_{n-1}^+)}{}.
\end{split}\end{equation*}
The first integral on the right-hand side of the last equality can be handled completely analogously to Lemma \ref{lem:semi:hypo}; we, thus, focus on the second term. Integrating by parts in space on each $T\in \Th$, we have
\begin{equation*}
	\begin{split}
		- \inner{\tjump{U}_{n-1}}{\Div_h ( A \nabla_h U)( t_{n-1}^+)}{}
		=&\
		\inner{\tjump{\nabla U}_{n-1}}{ ( A \nabla_h U)( t_{n-1}^+)}{} \\
		&   - \su \inner{\tjump{U}_{n-1}}{\normal \cdot ( A \nabla_h U)( t_{n-1}^+)}{\partial T}.
	\end{split}
\end{equation*}
Using the inverse inequalities \eqref{trace_H1_inv} and \eqref{time_trace_inv},  we have
\begin{equation}\label{eq: relation 1}
	\begin{split}
		& - \inner{\tjump{U}_{n-1}}{\normal \cdot ( A \nabla_h U)( t_{n-1}^+)}{\partial T}
		\\
		& \geq
		-  \frac{(\cinvT p)^2 (q+1)|\sqrt{A}|_T|}{h_T \sqrt{k_n}} \norm{\tjump{U}_{n-1}}{T} \norm{ \sqrt{A} \nabla U}{I_n\times T}  \\
		& \geq
		-   \frac{4 (\cinvT p)^4(q+1)^2|A|_T|}{h_T^2 k_n} \norm{ \sqrt{A} \nabla U}{I_n\times T}^2
		- \frac{1}{16} \norm{\tjump{U}_{n-1}}{T}^2.
\end{split}\end{equation}
The following identity holds on each time interval $I_n$:
\begin{equation}\label{eq: relation 2}
	\begin{split}
		&  \int_{I_n} \left( \inner{A \nabla_h U_t}{  \nabla_h U}{} \right) \dt
		+ \inner{\tjump{\nabla_h U}_{n-1}}{ ( A \nabla_h U)( t_{n-1}^+)}{}
		\\
		&= \frac{1}{2}\left( \norm{\tjump{\sqrt{A}\nabla_h U}_{n-1}}{}^2
		+	\norm{\sqrt{A}\nabla_h U(t_n^-)}{}^2 -	\norm{\sqrt{A}\nabla_h U(t_{n-1}^-)}{}^2   \right);
\end{split}\end{equation}
its proof follows upon observing $ \inner{A \nabla_h U_t}{  \nabla_h U}{}=\frac{1}{2} \frac{\ud}{\ud t}\norm{\sqrt{A}\nabla_h U}{}^2$.
The remainder of the proof follows completely analogously to the proof of Lemma \ref{lem:semi:hypo}.
\end{proof}

Hence, combining the bounds from Lemmata \ref{eq:energy_st}, \ref{eq:SUPG_st}, and \ref{eq:hypo_st}, we already deduce the following stability estimate
    \begin{equation}\label{space-time dG bound lower bound}
        B_n(U,V)
        \geq
 \frac{1}{2}\left(
	\norm{U(t_n^-)}{A,h}^2  -	\norm{U(t_{n-1}^-)}{A,h}^2 + \norm{\jumpu{U}_{n-1}}{A,h}^2  \right)
+ \frac{1}{4}\int_{I_n} \ndg{U}^2 \dt.
    \end{equation}

The last estimate motivates to define following space-time dG norm:
\begin{equation}\label{def: space-time dG norm}
    \ndg{w}_{\rm st}
  :  =\Big(
    \frac{1}{2}\Big(
	\norm{w(t_N^-)}{A,h}^2  +	\norm{w(t_0^+)}{A,h}^2 + \sum_{n=2}^N \norm{\jumpu{w}_{n-1}}{A,h}^2  \Big)
+ \frac{1}{4} \sum_{n=1}^N \int_{I_n} \ndg{w}^2 \dt\Big)^{\frac{1}{2}},
\end{equation}
for all $w|_{I_n}\in C(I_n;H^2(\Omega,\Th {))}$, $n=1,\dots,N$. Indeed, summing \eqref{space-time dG bound lower bound} over all $I_n$, $n=1,\dots, N$,  we get
    \begin{equation}\label{coerFD}
        B_{\rm st}(U,V)
        \geq
 \ndg{U}_{\rm st}^2.
    \end{equation}

Equipped with the last bound, we are ready to show the second main result.

\begin{theorem}[Asymptotic numerical hypocoercivity of space-time dG]\label{thm:discreteEquilibriation}
	Let $U \in  {\mathcal{V}}_{h,k}$ be the solution to  \eqref{eq:dG_FEM} for $f=0$. Then, we have
    \begin{equation}\label{FD_hypo}
        \norm{U(t_f)}{A,h}^2\le \prod_{n=1}^N	\Big(1+\frac{\kappa	k_n}{2 (q+1)^2}	\Big)^{-1}\norm{U(t_{0}^-)}{A,h}^2,
    \end{equation}
   with  $U(t_0^-):=\pi u_0\in  {\mathcal{V}}_h$ denoting for the approximated initial condition.
\end{theorem}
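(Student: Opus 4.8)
The plan is to replay the energy/Poincar\'e argument behind Theorem~\ref{main_SD}, but now slab by slab, closing it with a telescoping product in place of a Gr\"onwall-type integration.

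First I would check that the test function $V$ of \eqref{FDV} is admissible, i.e.\ $V\in V_{h,k}$: on each space--time slab $I_n\times T$ the quantities $xU_y+U_t$ and $\Div(A\nabla U)$ are again element-wise polynomials of total spatial degree at most $p$ and temporal degree at most $q$, since $U|_{I_n\times T}\in\mathbb{P}_q(I_n)\otimes\mathbb{P}_p(T)$; as in the remark following \eqref{semi: test function}, the use of a space that is discontinuous in space and time is exactly what makes this work, and $V$ need not be continuous across time levels. Testing the per-time-step formulation against $V$ and using $f=0$ then gives $B_n(U,V)=0$ for every $n=1,\dots,N$, with $B_n$ as in \eqref{B_n}.

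Next I would invoke the aggregated lower bound \eqref{space-time dG bound lower bound} --- which is precisely the sum of Lemmata~\ref{eq:energy_st}, \ref{eq:SUPG_st}, \ref{eq:hypo_st} --- so that $B_n(U,V)=0$ yields, for each $n$,
\[
\tfrac12\bigl(\norm{U(t_n^-)}{A}^2-\norm{U(t_{n-1}^-)}{A}^2+\norm{\tjump{U}_{n-1}}{A}^2\bigr)+\tfrac14\int_{I_n}\ndg{U}^2\dt\le 0,
\]
where, for $V_h$-functions, $\norm{\cdot}{A}$ is read as the broken norm $\norm{\cdot}{A,h}$. To turn the $\ndg{\cdot}$-integral into control of $\norm{U(t_n^-)}{A}^2$ I would combine two facts. (i) The mesh-dependent spectral gap of Lemma~\ref{lemma:mesh-dependent spectral gap}, applied at a.e.\ fixed $t$ to $U(\cdot,t)\in V_h$: since its proof simply discards the $\sqrt{\tau}$-term, the estimate $\ndg{w}^2\ge\kappa\norm{w}{A,h}^2$ is unaffected by the presence of a genuine time derivative inside $\ndg{\cdot}$, whence $\int_{I_n}\ndg{U}^2\dt\ge\kappa\int_{I_n}\norm{U(t)}{A}^2\dt$. (ii) A Hilbert-space-valued form of the temporal inverse estimate \eqref{time_trace_inv}: expanding $U|_{I_n}$ in an $\Ainner{\cdot}{\cdot}$-orthonormal basis of $V_h$ reduces it to the scalar estimate applied coefficient-wise, giving $\int_{I_n}\norm{U(t)}{A}^2\dt\ge (q+1)^{-2}k_n\norm{U(t_n^-)}{A}^2$.

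Finally, abbreviating $a_n:=\norm{U(t_n^-)}{A}^2$, dropping the non-negative time-jump term and inserting (i)--(ii) into the displayed inequality gives $\bigl(\tfrac12+\tfrac{\kappa k_n}{4(q+1)^2}\bigr)a_n\le\tfrac12 a_{n-1}$, that is $a_n\le\bigl(1+\tfrac{\kappa k_n}{2(q+1)^2}\bigr)^{-1}a_{n-1}$; iterating from $n=N$ down to $n=1$ and using $U(t_N^-)=U(t_f)$ and $U(t_0^-)=\pi u_0$ yields \eqref{FD_hypo}. The substantive work is already contained in Lemmata~\ref{eq:energy_st}--\ref{eq:hypo_st} and in Lemma~\ref{lemma:mesh-dependent spectral gap}; the only points needing care --- and the closest thing to an obstacle --- are the admissibility of $V$ across time slabs, the observation that the spectral gap survives the appearance of $U_t$ in $\ndg{\cdot}$, and the passage from the scalar to the vector-valued temporal inverse inequality.
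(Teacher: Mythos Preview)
Your proof is correct and follows essentially the same route as the paper's: invoke \eqref{space-time dG bound lower bound} with $B_n(U,V)=0$ on each slab, apply the spectral gap of Lemma~\ref{lemma:mesh-dependent spectral gap} pointwise in $t$, convert $\int_{I_n}\norm{U}{A}^2\dt$ to $\norm{U(t_n^-)}{A}^2$ via the temporal inverse estimate~\eqref{time_trace_inv}, and telescope. You are, if anything, slightly more careful than the paper in flagging the admissibility of $V\in V_{h,k}$, the irrelevance of the $\sqrt{\tau}$-term to the spectral-gap proof, and the passage to the $\norm{\cdot}{A}$-valued inverse inequality.
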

\begin{proof}
Setting $f=0$  into  \eqref{eq:dG_FEM}, for $V\in  {\mathcal{V}}_{h,k}$ given by  \eqref{FDV} on $I_n$, along with \eqref{space-time dG bound lower bound} give:
\begin{equation}\label{dG_coer}
	\frac{1}{2}\left(
	\norm{U(t_n^-)}{A,h}^2  -	\norm{U(t_{n-1}^-)}{A,h}^2  \right)
	+ \frac{1}{4}\int_{I_n} \ndg{U}^2 \dt \leq B_n(U,V) = 0.
\end{equation}
for $n=1,\dots,N$, noting carefully that for $n=1$, $U(t_0^-)=\pi u_0$ is the projected initial condition onto $ {\mathcal{V}}_h$. Hence, Lemma \ref{lemma:mesh-dependent spectral gap} implies
\begin{equation}\label{dG_stab}
\norm{U(t_n^-)}{A,h}^2+\frac{\kappa}{2}\int _{I_n}\norm{U}{A,h}^2\ud t
\le \norm{U(t_{n-1}^-)}{A,h}^2.
\end{equation}
Employing, now, the trace inverse estimate \eqref{time_trace_inv}, we get
\[
\begin{aligned}
\frac{\kappa	k_n}{2 (q+1)^2}\norm{U(t_{n}^-)}{A,h}^2
	\le &\ \frac{\kappa}{2}\int _{I_n}\norm{U}{A,h}^2\ud t,
\end{aligned}
\]
which upon substitution into \eqref{dG_stab} gives
\[
\norm{U(t_n^-)}{A,h}^2\le \Big(1+\frac{\kappa	k_n}{2(q+1)^2} \Big)^{-1}\norm{U(t_{n-1}^-)}{A,h}^2.
\]
Applying the last bound iteratively for all $n=1,\dots,N$, we deduce the required estimate.
\end{proof}

\begin{remark}
Assuming constant timestep $k_n=k$ for all $n=1,\dots,N$, \eqref{FD_hypo} gives
\begin{equation}\label{eq:convergenceToEquilibriumForFullyDiscrete}
\norm{U(t_f)}{A,h}\le 	\Big(1+\frac{\kappa	k}{2 (q+1)^2}	\Big)^{-\frac{t_f}{2k}}\norm{U(t_{0}^-)}{A,h}\to  e^{-\frac{\kappa t_f}{4(q+1)^{2}}} \norm{U(t_{0}^-)}{A,h},
\end{equation}
as $k\to 0$. Comparing with \eqref{hypoSUPG_stab_semidsicrete}, the exponent has the same scale of the respective one in the semidiscrete case for $q=0$.
 {Note that the above result also holds for variable temporal degree, say $q_n$, by replacing $q$ with $\max_n {q_n}$.}
\end{remark}

\subsection{Hypococercivity-enhanced inf-sup stability}\label{subsec: Error bound}
A by-product of the proof of Theorem \ref{thm:discreteEquilibriation} is the bound  \eqref{coerFD}, viz., $B_{\rm st}(U,V)
\geq
\ndg{U}_{\rm st}^2$, with $V$ given in \eqref{FDV}.  This is one side of the proof of an inf-sup condition in the hypocoercivity-enhanced norm $\ndg{\cdot}_{\rm st}$ for the \emph{classical} space-time dG method. To complete the proof of the inf-sup condition, we need also to show that $\ndg{V}_{\rm st}\le C\ndg{U}_{\rm st}$. This given in the final main result of this work.

\begin{theorem}\label{Theorem: Inf-sup}
There exists a positive constant $\Lambda$, independent of discretization parameters $h_T$, $k_n$, $p$ and $q$, (but depends on $\norm{x}{L_\infty(\Omega)}$,) such that:
\begin{equation}\label{eq: inf-sup}
\inf_{U\in  {\mathcal{V}}_{h,k}\backslash \{0\}} \sup_{V\in  {\mathcal{V}}_{h,k}\backslash \{0\}} \frac{B_{\rm st}(U,V)}{\ndg{U}_{\rm st}\ndg{V}_{\rm st}} \geq \Lambda.
\end{equation}
\end{theorem}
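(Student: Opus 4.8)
The plan is to establish the inf-sup condition \eqref{eq: inf-sup} by combining the already-proven lower bound \eqref{coerFD}, namely $B_{\rm st}(U,V) \geq \ndg{U}_{\rm st}^2$ with $V$ the specific test function from \eqref{FDV}, together with a continuity-type bound $\ndg{V}_{\rm st} \leq \Lambda^{-1} \ndg{U}_{\rm st}$. Once both are in hand, for each $U \neq 0$ one has $\sup_{W} B_{\rm st}(U,W)/(\ndg{U}_{\rm st}\ndg{W}_{\rm st}) \geq B_{\rm st}(U,V)/(\ndg{U}_{\rm st}\ndg{V}_{\rm st}) \geq \ndg{U}_{\rm st}^2/(\ndg{U}_{\rm st}\cdot \Lambda^{-1}\ndg{U}_{\rm st}) = \Lambda$, which is exactly the claim. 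So the entire work reduces to bounding $\ndg{V}_{\rm st}$ by $\ndg{U}_{\rm st}$, and crucially this bound must be \emph{uniform} in all discretisation parameters $h_T, k_n, p, q$ — this is what forces a careful argument rather than a crude inverse-inequality estimate.

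The key steps for the continuity bound are as follows. Write $V = V_1 + V_2 + V_3 = U + \tau(U_t + xU_y) - \Div_h(A\nabla_h U)$ and estimate $\ndg{V}_{\rm st}$ by the triangle inequality, controlling each $\ndg{V_i}_{\rm st}$ separately and then each of them by $\ndg{U}_{\rm st}$. For $V_1 = U$: the term $\ndg{U}_{\rm st}$ is controlled by $\ndg{U}_{\rm st}$ itself up to bounding the pieces of $\ndg{U}_{\rm st}$ appearing in $\ndg{\cdot}_{\rm st}$ that involve higher derivatives ($\norm{\sqrt{A}\nabla_h U_x}{}$, the $W = U_t+xU_y$ term, boundary terms) — here one uses the inverse inequalities \eqref{trace_H1_inv}, \eqref{time_trace_inv} to fold these into the $L_2$-type and $\norm{\cdot}{\rm dG}$-type components, and, decisively, the scaling of $A$ (via $\delta \sim \max\{p^4/h^2, p^2 q/(h\sqrt k)\}$, so that $\alpha = (8\delta)^{-1}$, etc.) is designed precisely so that factors like $|A|_T| p^4/h_T^2$ and $|A|_T|(q+1)^2/k_n$ are $O(1)$; this is what keeps the constant parameter-free. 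For $V_2 = \tau W$: since $\tau \sim \min\{h^2/p^4, k/q^2\}$, the norm $\ndg{\tau W}_{\rm st}$ involves terms like $\norm{\sqrt{A}\nabla_h (\tau W)_x}{}$, $\norm{\sqrt{\tau}((\tau W)_t + x(\tau W)_y)}{}$, time jumps $\norm{\jumpu{\sqrt{A}\nabla_h(\tau W)}_{n-1}}{}$ and boundary terms — each is reduced by inverse estimates to a multiple of $\norm{\sqrt{\tau}W}{}$ with the multiplicative constants again absorbed by the $\tau$- and $\delta$-scalings, and $\norm{\sqrt{\tau}W}{}^2$ is itself a component of $\ndg{U}_{\rm st}^2$. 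For $V_3 = -\Div_h(A\nabla_h U)$: this is the term requiring the most care, since it involves up to \emph{third} spatial derivatives of $U$ once one applies the $\ndg{\cdot}_{\rm st}$-norm; one repeatedly uses the inverse inequality $\norm{\nabla V}{T} \leq \cinv p^2 h_T^{-1}\norm{V}{T}$ together with the smallness of $|A|_T|$ to show $\norm{\sqrt{A}\nabla_h V_3}{} \lesssim \norm{\sqrt{A}\nabla_h U_x}{} + \norm{U_x}{} + \dots$, with all constants controlled by the definition of $\delta$.

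The main obstacle, I expect, is precisely this last point: keeping the continuity constant $\Lambda$ \emph{independent} of $h, k, p, q$ while estimating $\ndg{V_3}_{\rm st}$, because $V_3$ carries two extra spatial derivatives relative to $U$ and the norm $\ndg{\cdot}_{\rm st}$ itself contains a derivative, so naively one picks up a factor $(p^2/h)^{\text{several}}$ from iterated inverse inequalities. The resolution is that every such factor must be matched against a compensating power of $|A|_T| \sim \delta^{-1} \sim h^2/p^4$ (and $\tau \sim h^2/p^4$, $k/q^2$), and the definitions \eqref{def:deltaFD_def}, \eqref{def: tau_st} are engineered so that the bookkeeping closes — so the proof is, in essence, a careful accounting exercise verifying that the powers balance. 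A secondary subtlety is handling the discrete time-jump contributions $\norm{\jumpu{\sqrt{A}\nabla_h V_i}_{n-1}}{}$ in $\ndg{\cdot}_{\rm st}$, which requires the one-dimensional time inverse estimate \eqref{time_trace_inv} and the fact that $\tau_{T,n} \leq k_n/(64(q+1)^2)$ and $\alpha \lesssim h_T\sqrt{k_n}/(p^2(q+1))$ from \eqref{final_bound_alpha}; once these are invoked the time-jump terms are absorbed into the space-time integral terms of $\ndg{U}_{\rm st}^2$. Assembling the three estimates $\ndg{V_i}_{\rm st} \leq C_i \ndg{U}_{\rm st}$ with $C_i$ parameter-free and taking $\Lambda = (C_1 + C_2 + C_3)^{-1}$ completes the proof.
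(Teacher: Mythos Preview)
Your proposal is correct and follows essentially the same route as the paper: combine the coercivity bound \eqref{coerFD} with a continuity estimate $\ndg{V}_{\rm st} \leq \Lambda^{-1}\ndg{U}_{\rm st}$ obtained by splitting $V=V_1+V_2+V_3$ via the triangle inequality and bounding each piece using the inverse estimates together with the built-in scalings of $\tau$ and $\delta$. Two minor remarks: the case $V_1=U$ is in fact trivial (so $C_1=1$; there are no higher-derivative terms to ``fold in''), and for $V_3$ the paper streamlines the bookkeeping by first observing $\|\sqrt{A}\nabla U\|_T^2 \le 4\alpha\,\|\sqrt{B}\nabla U\|_T^2$ with $B={\rm diag}(1,\alpha^2)$, which reduces every contribution to a multiple of $\|\sqrt{B}\nabla_h U\|^2$ already appearing in $\ndg{U}^2$.
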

\begin{proof}
	To show this, it remains to prove that  there exists positive constant $\Lambda>0$, independent of discretization parameters $h$, $k_n$, $p$ and $q$, such that
	\begin{equation}\label{eq: upper bound}
		\ndg{V}_{\rm st}
		\leq \ndg{U}_{\rm st} + \ndg{V_2 }_{\rm st}+ \ndg{V_3}_{\rm st} \leq \Lambda^{-1}  \ndg{U}_{\rm st}.
	\end{equation}
for $V$ given in \eqref{FDV}.  This bound, together with  $B_{\rm st}(U,V)
\geq
\ndg{U}_{\rm st}^2$, proves \eqref{eq: inf-sup}.

To streamline notation, we set $\chp:= \max\{\cinvT^2,\cinv\} p^2 h_T^{-1}$, so that the inverse estimates \eqref{trace_H1_inv} can be expressed as
\begin{equation}\label{inv_est_short}
	\norm{Z}{\partial T}^2 \leq \chp  \norm{Z}{ T}^2,\quad\text{and}\quad
	\norm{\nabla Z}{T}^2 \leq  \chp^2 \norm{Z}{ T}^2,
\end{equation}
for all $Z\in\mathbb{P}_p(K)$, $p\in\mathbb{N}$, $T\in\Th $.  On each $I_n\times T$, we note the bounds
\begin{equation}\label{bounds}
	\alpha \le h_T^2\cinv^{-2}p^{-4} \leq \bar{c}\chp^{-2},\qquad  \tau_{T,n}\le \frac{2}{3}\bar{c}\chp^{-2}, \quad \text{ and }\quad
\tau_{T,n}\le
\frac{k_n}{64(q+1)^2},
\end{equation}
with $\bar{c}:=\max\{1,\cinvT^4\cinv^{-2}\}$, from the definitions of $\tau_{T,n}$ and $\delta_{T,n}$. We shall make frequent use of \eqref{inv_est_short} and \eqref{bounds} below without explicit announcement in the text.

We first show $\ndg{V_2 }_{\rm st}\leq C_2 \ndg{U}_{\rm st}$ for a constant $C_2$ independent of $h,p,q,k$. To that end,  we begin by setting, as before, $W|_{I_n\times T}:=(U_t+xU_y)|_{I_n\times T}$, $n=1,\dots,N$, $T\in\Th$, for notational brevity. Then, on each time interval  $I_n$, we have
\begin{equation*}\begin{split}
		 \norm{\sqrt{\sigma} \jump{\tau W}}{I_n\times \gint }^2
		 \leq &\
		2  \su \norm{\sqrt{\sigma} {\tau W}}{I_n\times \partial T }^2
		\\
	\leq &\
		2  \su \sigma_T \tau_{T,n} \chp\norm{ \sqrt{\tau} W}{I_n\times  T }^2
	\le
		c_1 \su \norm{ \sqrt{\tau} W}{I_n\times  T }^2 ,
\end{split}\end{equation*}
 with  $c_i>0$ (see also below), $i\in\mathbb{N}$, independent of $h$, of $p$, and of the functions involved.
Next, since $|A|_{I_n\times T}|\leq 2\alpha \le2\bar{c}\chp^{-2}$, we have
\begin{equation*}\begin{split}
	  \norm{\sqrt{A}\nabla (\tau W )_x}{I_n\times T}^2
		&\leq
	 \chp^4 |A|_{I_n\times T}| \tau_{T,n}  \norm{ \sqrt{\tau}W}{I_n\times T}^2
		\leq  c_2  \norm{\sqrt{\tau}W}{I_n\times T }^2,
\end{split}\end{equation*}
\begin{equation*}\begin{split}
	\norm{\alpha (\tau W)_y}{I_n\times T}^2
		&\leq  \chp^2 \alpha^2 \tau_{T,n}  \norm{ \sqrt{\tau} W}{I_n\times T}^2  \le c_3 \chp^{-4}\norm{\sqrt{\tau} W}{ I_n\times T }^2 .
\end{split}\end{equation*}
Similarly, we also have $\norm{(\tau W)_x}{I_n\times T}^2
 \leq
c_4\norm{ \sqrt{\tau} W}{I_n\times T}^2$.

Now, triangle inequality and the estimates \eqref{time_trace_inv} and \eqref{bounds} imply
\begin{equation*}
		   \tau_{T,n}^2 \norm{ \sqrt{\tau}(W_t + xW_y) }{I_n\times T}^2
		\leq c_5
	 \norm{ \sqrt{\tau}W  }{I_n\times T}^2,
\end{equation*}
with $c_5:= 24+
\norm{x}{L_\infty(\Omega)}^2$.
Also,
\begin{equation*}
		     \norm{ \sqrt{x\normal_2 A}\nabla (\tau W) }{I_n\times \partial_+ T}^2 +	 \norm{ \sqrt{|x\normal_2|} \tau W}{I_n\times \partial T }^2
		\leq
	c_6  \norm{ \sqrt{\tau}W }{I_n\times  T}^2.
	  \end{equation*}

Finally, for the time discretisation terms, using \eqref{bounds}, we have
\begin{equation*}\begin{split}
			\norm{\tau W(t_{n-1}^\pm)|_{T}}{A}^2 = &\
		\norm{\tau W(t_{n-1}^\pm)}{T}^2
		+
		\norm{\tau \sqrt{A}\nabla W(t_{n-1}^\pm)}{T}^2
		\\
			\leq&\  (q+1)^2 \tau_{T,m}k_m^{-1}
		(1
		+ 2\bar{c} ) \norm{\sqrt{\tau}W }{I_{m}\times T}^2 \le c_7 \norm{\sqrt{\tau}W }{I_{m}\times T}^2,
\end{split}\end{equation*}
with $m:=n+\frac{\pm 1-1}{2}$, i.e., $m=n$ for $t_{n-1}^+$ and $m=n-1$ for $t_{n-1}^-$. Combining the above bounds, we arrive at $\ndg{V_2 }_{\rm st}\leq C_2 \ndg{U}_{\rm st}$ for $C_2>0$ independent of $h,p,q,k$.

 To conclude, we also need to show that $\ndg{V_3 }_{\rm st}\leq C_3 \ndg{U}_{\rm st}$ for a constant $C_3>0$ independent of $h,p,q,k$. We start by observing that
\begin{equation}\label{eq: A norm bound}
	\begin{split}
		\|\sqrt{A}\nabla U\|_T^2
		& =  \alpha \|U_x\|_T^2 + \frac{8\alpha^2}{3} (U_x, U_y)_T + \frac{20\alpha^3}{9}   \|U_y\|_T^2
		\leq 2\alpha \|U_x\|_T^2  +  \frac{28\alpha^3}{9}   \|U_y\|_T^2,
	\end{split}
\end{equation}
or 	$\|\sqrt{A}\nabla U\|_T^2 \le 4\alpha  \norm{\sqrt{B}\nabla U}{T}^2$, with $B= {\rm diag}(1,\alpha^2)$ as in the proof of Lemma \ref{lemma:mesh-dependent spectral gap}.
We consider each term in $\ndg{V_3 }_{\rm st}^2$ separately, starting from
\begin{equation*}
		\norm{\sqrt{\sigma} \jump{\Div(A \nabla U)}}{I_n\times \gint }^2
		\leq
\su
		4\sigma_T \chp^3 |A|_{I_n\times T}|   \norm{\sqrt{A} \nabla U}{ I_n\times T }^2
		 \le  \tilde{c}_1\norm{\sqrt{B}\nabla_h U}{}^2 ,
\end{equation*}
with $\tilde{c}_i>0$ (see also below), $i\in\mathbb{N}$, independent of $h,p,q,k$ and of the functions involved.
Also, we have, respectively,
\begin{equation*}
		   \norm{\sqrt{A}\nabla \left(\Div(A\nabla U) \right)_x}{I_n\times T}^2
		\leq
	4	\alpha \chp^6  |A|_{I_n\times T}|^2  \norm{\sqrt{B}\nabla U}{I_n\times T}^2\le \tilde{c}_2 \norm{\sqrt{B}\nabla U}{I_n\times T}^2,
\end{equation*}
\begin{equation*}
		  \norm{\alpha  \big(\Div(A\nabla U) \big)_y}{I_n\times T}^2
		\leq  4  \alpha^3\chp^4 |A|_{I_n\times T}|  \norm{\sqrt{B}\nabla U}{I_n\times T}^2
			\leq  \tilde{c}_3  \chp^{-4} \norm{\sqrt{B}\nabla U}{I_n\times T}^2,
\end{equation*}
and
\begin{equation*}
		\norm{\Div(A\nabla U)_x}{I_n\times T}^2
	     \leq
 4 \alpha\chp^4 |A|_{I_n\times T}|  \norm{\sqrt{B}\nabla U}{I_n\times T}^2\le \tilde{c}_4 \norm{\sqrt{B}\nabla U}{I_n\times T}^2.
\end{equation*}
Further, recalling \eqref{final_bound_alpha}, we see that $\alpha< \underline{c}\sqrt{k_n}(q+1)^{-1}\chp^{-1}$,  with $\underline{c}:= \max\{1,\cinv\cinvT^{-2}\}$, so that
\begin{equation*}\begin{split}
		&   \norm{ \sqrt{\tau}((\Div(A\nabla U))_t + x(\Div(A\nabla U))_y) }{I_n\times T}^2\\
		\leq&  8\alpha \chp^2
	\big(   12(q+1)^4k_{n}^{-2}    \! +\!
		\norm{{x}}{L_\infty(T)}^2 \chp^4\big)|A|_{I_n\times T}|\tau_{T,n}\norm{\sqrt{B}\nabla U}{I_n\times T}^2
	 \leq   \tilde{c}_{5}   \norm{\sqrt{B}\nabla U}{I_n\times T}^2.
\end{split}\end{equation*}
Also,  we have
\begin{equation*}\begin{split}
		& \norm{ \sqrt{x\normal_2 A}\nabla (\Div(A\nabla U)) }{I_n\times \partial_+ T}^2 + \norm{ \sqrt{|x\normal_2|} \Div(A\nabla U)}{I_n\times \partial T}^2 \\
		\leq &\
\norm{x}{L_\infty(\Omega)}^2 \big(	 \chp^5 |A|_{I_n\times T}|^2
+ \chp^3 |A|_{I_n\times T}|  \big) \norm{ \sqrt{A}\nabla U }{I_n\times T}^2
	  \leq   \tilde{c}_{6} \chp^{-1}  \norm{\sqrt{B}\nabla U}{I_n\times T}^2.
\end{split}\end{equation*}
Finally,  employing again \eqref{bounds} and $\alpha< \underline{c}\sqrt{k_n}(q+1)^{-1}\chp^{-1}$, we have
\begin{equation*}\begin{split}
			& \norm{\Div(A\nabla U)(t_{n-1}^\pm)|_{T}}{A}^2 \\
			=&\
		\norm{\Div(A\nabla U)(t_{n-1}^\pm)}{T}^2
		+\norm{\sqrt{A}\nabla\Div(A\nabla U)(t_{n-1}^\pm)}{ T}^2
		\\
		\leq &\
  \frac{(q+1)^2}{k_m}  \Big(  \chp^2 |A|_{I_m\times T}|
		+ \chp^4 |A|_{I_m\times T}|^2   \Big) \norm{\sqrt{A} \nabla U }{I_m\times  T }^2
		  \leq   \tilde{c}_{7} \norm{\sqrt{B}\nabla U}{I_m\times T}^2,
\end{split}\end{equation*}
for $m:=n+\frac{\pm 1-1}{2}$.   Combining the above bounds, we arrive at $\ndg{V_3}_{\rm st}\leq C_3 \ndg{U}_{\rm st}$ for $C_2>0$ independent of $h,p,q,k$, which concludes the proof.
\end{proof}

\begin{remark}
	We note that Theorems \ref{thm:discreteEquilibriation} and \ref{Theorem: Inf-sup} remain valid (upon setting $p=q$), with different constants in the definitions of $\tau$ and $\delta$, if we consider \emph{total} degree space-time polynomials in the spirit of \cite{MR3672375}.
\end{remark}

\subsection{\emph{A priori} error bound}

Set $e := u-U$. The consistency of \eqref{eq:dG_FEM} implies:
\begin{equation}\label{eq:ErrorEquation}
\sum_{n=1}^N \int_{I_n}\!\! \big( \inner{e_t}{V}{} + a_{\rm dG}(e,V) \big) \dt + \sum_{n=2}^{N} \inner{\tjump{e}_{n-1}}{V( t_{n-1}^+)}{}
+	\inner{e(t_0^+)}{V(t_0^+)}{}
=
0,
\end{equation}
for all $V\in \mathcal{V}_{h,k}$.

	We define the space-time  projection  $\pi_{\rm st}: = \pi^t_q\otimes \pi^s_p$, whereby $\pi^t_q|_{I_n} \colon L_2(I_n) \to \mathbb{P}_q(I_n)$ is the $L_2$-orthogonal projection on each $I_n$, $n=1,\dots,N$, and  $\pi^s_p|_{T} \colon L_2(T) \to \mathbb{P}_p(T)$ is the local $L_2$-orthogonal projection on each $T\in \Th$. In particular, for all $v\in L^2(I_n; L^2(\Omega))$ and for all $W \in \mathcal{V}_{h,k}$, we have
\begin{equation}\label{def: space-time L2 orthogonal projection}
	\int_{I_n} (v,W)_T \ud t
				=
				\int_{I_n} (\pi_{\rm st}{v},{W})_T \ud t.
\end{equation}

The approximation properties of $\pi_{\rm st}$ can be immediately proven by a tensor-product argument and known approximation properties of the $L_2$-projection error, along with the stability of $ \pi^t_q$. Setting  \[
\mathcal{W}_{r,s,\epsilon}^n:=H^1(I_n;H^{s+1}(\Omega,\Th)) \cap H^{r}(I_n;H^{2.5+\epsilon}(\Omega)),
\] the following best approximation estimates hold:
\begin{equation}\label{eq:space_time_approximation}
	\begin{aligned}
&\int_{I_n}
%\sum_{T\in\mathcal{T}}
\left( \norm{D^{\lambda} (v-\pi_{\rm st}v)}{T}^2
%\sum_{T\in\mathcal{T}}
+
\norm{\sqrt{h}D^\lambda (v-\pi_{\rm st}v)}{\partial T}^2
\right)\ud t \\
\le&\  C_{\rm app} \int_{I_n}
\big( k^{2r}_n\norm{ \partial_t^r D^\lambda v}{T}^2
+ h^{2(s+1-\lambda)}_T|v|_{H^{s+1}(T)}^2
\big)\ud t,
	\end{aligned}
\end{equation}
for any $v\in\mathcal{W}_{r,s,\epsilon}^n$,
%and using the multiplicative trace inequality, we deduce
%\begin{equation}\label{eq:space_time_approximation trace}
%	\begin{aligned}
%\int_{I_n}
%%\sum_{T\in\mathcal{T}}
%\norm{h^{\frac{1}{2}}\nabla^\lambda (v-\pi_{\rm st}v)}{\partial T}^2
%\ud t
%\le&  C_{app} \int_{I_n}
%\big(k_n^{2r} |   \partial_t^r  v|_{H^{\lambda}(\partial T)}^2
%+ h_T^{2(s+1-\lambda)}|v|_{H^{s+1}( T)}^2
%\big)\ud t ,
%	\end{aligned}
%\end{equation}
$\epsilon>0$, $n=1,\dots,N$, $1\le r\le q+1$, $1.5< s\le p$, $p\geq2$ and $\lambda\leq 2$, and $C_{\rm app}>0$ independent of $h$, of $k_n$ and of $v$, with $\partial_t^r$ denoting the $r$-th derivative with respect to $t$ and $D^\lambda$ is a spatial derivative with respect to a multi-index $\lambda$. For simplicity of the presentation, we absorb the (known) dependence on $p$ and on $q$ into the unspecified constants; $hp$-version bounds are by all means possible by careful tracking of $p$ and of $q$.

Next, %using \eqref{eq: A norm bound}, along with
the bound $|A|_{T}|\leq 2\alpha \le C {h_T^2/p^4}$, \eqref{inv_est_short}, and the stability of $L^2$-orthogonal projection $ \pi^s_p$, give, for any $v\in H^1(I_n;H^{1}(\Omega,\Th)) $,
\[
\norm{\sqrt{A} \nabla (\pi^s_p v)}{T}^2
\le   |A|_{T}| \norm{\nabla (\pi^s_p v)}{T}^2
\le C  \norm{\pi^s_p v}{T}^2
\le C  \norm{v}{T}^2,
\]
and
\[
\norm{\sqrt{A} \nabla (\pi^s_p v)_x}{T}^2
\le   |A|_{T}| \norm{(\nabla \pi^s_p v)_x}{T}^2
\le  \norm{\nabla \pi^s_p v}{T}^2
\le C  \norm{\nabla v}{T}^2.
\]
Using the above stability, we have for $t\in\bar{I}_n$:
\begin{equation}\label{eq:space_time_approximation new}
	\begin{aligned}
		\norm{(v-\pi_{\rm st}v)(t)}{A,h}^2
		\leq &\
		C_{\rm app} \Big(\su
		h_T^{2(s+1)}|v(t)|_{H^{s+1}(T)}^2
		+
		\int_{I_n}
		k_n^{2r-1} \|   \partial_t^r  v \|_{A,h}^2
		\ud t\Big),
	\end{aligned}
\end{equation}
and
\begin{equation}\label{eq:space_time_approximation new_new}
	\begin{aligned}
	& \int_{I_n}\Big( \norm{\sqrt{A}\nabla(v-\pi_{\rm st}v)_x}{T}^2+ \norm{ \sqrt{A}\nabla (v-\pi_{\rm st}v) }{\partial T}^2\Big)\ud t\\
		\leq &\
		C_{\rm app} \int_{I_n} \Big(
		h_T^{2s}|v|_{H^{s+1}(T)}^2
		+
		k_n^{2r}h_T(1+h_T)\|   \partial_t^r  \nabla^2 v \|_{T}^2
		\Big)\ud t,
	\end{aligned}
\end{equation}
for any $v\in\mathcal{W}_{r,s,\epsilon}^n$, $\epsilon>0$, $n=1,\dots,N$, $1\le r\le q+1$, $1.5< s\le p$, $p\geq2$, $\lambda\leq 2$.
\begin{theorem}
Let $\max_{T}h_T<1$ and $\max_{n}k_n<1$. If the exact solution satisfies $u|_{I_n}\in \mathcal{W}_{r,s,\epsilon}^n$, with $\epsilon>0$, $1\le r\le q+1$, $1.5< s\le p$, $p\geq2$, $n=1,\dots, N$, we have the bound
\begin{equation}\label{eqn: error bound in ST norm}
\begin{split}
	\ndg{u-U}_{\rm st} \leq C \bigg(\sum_{n = 1}^N \mathbb{E}_n\bigg)^{\frac12},
\end{split}
\end{equation}
where
\begin{equation*}
	\begin{aligned}
		\mathbb{E}_n := &\   \su \int_{I_n}\bigg(h_T^{2s}\Big( k_nh_T^2|\partial_t u|_{H^{s+1}(T)}^2+(1+k_n^{-1}h_T^2)|u|_{H^{s+1}(T)}^2\Big)
		% +\int_{I_n}\!k_n^{2r-1} \|   \partial_t^r  v \|_{A,h}^2\ud t\bigg)
		\\
		&\qquad+
		k_n^{2r-1}\Big( (k_n+h_T^2) \norm{\partial_t^r \nabla v}{T}^2+k_nh_T\|   \partial_t^r  \nabla^2 v \|_{T}^2
+ k_n h_T^{-1}\norm{ \partial_t^r  v}{T}^2\Big)
		\bigg)\ud t
	\end{aligned}
\end{equation*}
with $C>0$, independent of $h_T$, $k_n$, and of $u$, but dependent on $p$, $q$, $\Omega$ and on $C_{\rho}$.
\end{theorem}
\begin{proof}
	We begin by decomposing the error as follows: $e = \eta + \xi$, with $\eta= u - \pi_{\rm st}u$ and $\xi= \pi_{\rm st}u - U$. The triangle inequality, Galerkin orthogonality, and the inf-sup stability \eqref{eq: inf-sup}, imply
	\begin{equation}\label{eq: abstract error bound}
		\begin{split}
			\ndg{u-U}_{\rm st} \leq \ndg{\xi}_{\rm st} + \ndg{\eta}_{\rm st}
			\leq \frac{1}{\Lambda}  \sup_{V\in  {\mathcal{V}}_{h,k}\backslash \{0\}} \frac{|B_{\rm st}(\eta,V)|}{\ndg{V}_{\rm st}}
			+\ndg{\eta}_{\rm st}.
		\end{split}
	\end{equation}

First, we estimate the first term on the right-hand side of \eqref{eq: abstract error bound} from above.
Integration by parts for the time derivative in \eqref{eq:space-time bilinear form}, gives
\begin{equation*}
\begin{split}
&   \sum_{n=1}^N \int_{I_n} \inner{\eta_t}{V }{} \dt + \sum_{n=2}^{N} \inner{\tjump{\eta}_{n-1}}{V ( t_{n-1}^+)}{}
+	\inner{\eta(t_0^+)}{V (t_0^+)}{} \\
=&  -\sum_{n=1}^N \int_{I_n} \inner{\eta}{V _t}{} \dt - \sum_{n=2}^{N} \inner{\eta( t_{n-1}^-)}{\tjump{V }_{n-1}}{}
+	\inner{\eta(t_N^-)}{V (t_N^-)}{},
\end{split}
\end{equation*}
where, since $ V_t \in  {\mathcal{V}}_{h,k}$,  we have  $\int_{I_n}
\inner{\eta}{V_t}{T}
\ud t = 0$.
Furthermore, for $V  \in  {\mathcal{V}}_{h,k}$, \eqref{def: space-time L2 orthogonal projection}, implies
\[
\int_{I_n} a_{\rm dG}(\eta,V ) \ud t =  \int_{I_n} a_{\rm dG}(\pi_q^t\eta,V ) \ud t,
\]
where we note that $\pi_q^t\eta = \pi_q^t(u-\pi_p^s u)$.

Element-wise integration by parts for the advection term gives
\begin{equation*}
	\begin{aligned}
		&	\sum_{T \in \Th} \big( \inner{x \pi_q^t\eta_y}{V}{T}-
		\inner{x\normal_2 \pi_q^t\jumpu{\eta}}{V^+}{\partial_- T \cap \gint}
		+
		\inner{x\normal_2 \pi_q^t\eta^+ }{ V^+ }{\partial_- T \cap \gif} \big)\\
		=& \  	\sum_{T \in \Th} \big(- \inner{\pi_q^t\eta}{x V_y}{T}  +  \inner{x\normal_2  \pi_q^t\eta^-}{ \jumpu{V}}{\partial_- T \cap \gint}
		+
		\inner{x\normal_2 \pi_q^t\eta^+ }{ V^+ }{\partial_+ T \cap \Gamma}
		\big).
	\end{aligned}
\end{equation*}
Also, since $V  \in  {\mathcal{V}}_{h,k}$ and, thus, $x V _y \in  {\mathcal{V}}_{h,k}$, the orthogonality \eqref{def: space-time L2 orthogonal projection} implies
\begin{equation*}
	\int_{I_n}
	\inner{\pi_q^t\eta}{x V _y}{T}
	\ud t = 0.
\end{equation*}

Thus, using the stability of $\pi_q^t$, and performing standard estimation, we deduce
\begin{equation*}\label{bound one on a bilinear form}
	\begin{aligned}
		| \int_{I_n} a_{\rm dG}(\eta,V ) \ud t|
		\leq & \
		C\Big(
		\int_{I_n}\su\Big(
		\norm{\pi_q^t\eta_x}{T}^2 + \norm{\sigma^{-\frac12}\pi_q^t\eta_x}{\partial T\cap \gint}^2 \\
		&+ \norm{\sqrt{\sigma}\pi_q^t\eta}{\partial T\cap \gint}^2		+ \norm{\sqrt{|x\normal_2|} \pi_q^t\eta}{\partial T}^2\Big)
		\ud t
		\Big)^{\frac12}
		\left(
		\int_{I_n}
		\ndg{V }^2
		\ud t
		\right)^{\frac12}\\
		\leq & \
		C\Big(
		\int_{I_n}\su\mathcal{E}_{n,T}^2
		\ud t
		\Big)^{\frac12}
		\left(
		\int_{I_n}
		\ndg{V }^2
		\ud t
		\right)^{\frac12},
	\end{aligned}
\end{equation*}
with
\[
\begin{aligned}
\mathcal{E}_{n,T}:=&\
\big(\norm{(u-\pi_p^su)_x}{T}^2 + \norm{\sigma^{-\frac12}(u-\pi_p^su)_x}{\partial T\cap \gint}^2\\
&\ + \norm{\sqrt{\sigma}(u-\pi_p^su)}{\partial T\cap \gint}^2
+ \norm{\sqrt{|x\normal_2|} (u-\pi_p^su)}{\partial T}^2\big)^{\frac12}.
\end{aligned}
\]
The above, along with application of \eqref{bound one on a bilinear form},  result in the bound
\begin{equation}\label{def: error bound for xi}
	\begin{aligned}
 \frac{1}{\Lambda}\sup_{V\in  {\mathcal{V}}_{h,k}\backslash \{0\}}\!\frac{|B_{\rm st}(\eta,V )|}{\ndg{V }_{\rm st}}
\leq  C\Big(
\sum_{n=1}^N
\su\!\Big( \int_{I_n}\mathcal{E}_{n,T}^2\ud t  +\norm{\eta(t_{n}^-)}{T}^2\Big)
\Big)^{\frac12}
.
\end{aligned}
\end{equation}

Employing standard best approximation estimates, the right-hand side of \eqref{def: error bound for xi} can be bounded above by
\begin{equation}\label{eq: xi bound Error time step}
	\begin{aligned}
 &    C\Big(
\sum_{n=1}^N \su\Big(
 \int_{I_n}
 h_T^{2s}| u|^2_{H^{s+1}(T)}
\ud t+ h_T^{2(s+1)}
| u(t_{n}^-)|^2_{H^{s+1}(T)}\Big)
\Big)^{\frac12}
,
	\end{aligned}
\end{equation}
with $C>0$ depending on $p$ and on $C_\rho$.

Now, utilising the trace estimate with respect to the time variable, we have
\[
\begin{aligned}
	   \ndg{\eta}_{\rm st}^2
\le&\  \sum_{n=1}^N\Big(\norm{\eta(t_n^-)}{A,h}^2+\norm{\eta(t_{n-1}^+)}{A,h}^2+ \int_{I_n} \ndg{\eta}^2 \dt\Big)
\end{aligned}
\]
The continuity of $u$ and of $\pi_q^t u$ with respect to the spatial variables implies $ \jump{\eta}= \jump{\pi_q^t  u- \pi_{\rm st}u}=\jump{\pi_q^t  (u- \pi_p^su)}$ on $\gint$. The latter along with a trace estimate and standard arguments, gives
\[
\begin{split}
	\ndg{\eta}^2
	\le&\  C\!\!
	 \sum_{T \in \Th} \Big(
	 \norm{\sqrt{A}\nabla \eta_x}{T}^2
	 +
	 \norm{ \sqrt{A}\nabla \eta }{\partial T}^2
	 +
h_T^2 \norm{\eta_y}{T}^2
	 +
	 \norm{\eta_x}{T}^2 + k_n\norm{ \eta_t }{T}^2
+\norm{\eta}{\partial T}^2  \Big)\\
	 &
	 + \norm{\sqrt{\sigma+|xn_2|} \jump{\pi_q^t(u- \pi_p^su)}}{\gint }^2,
\end{split}
\]
for constant $C>0$ depending on $p$, on $q$ and on $\Omega$, upon
recalling the definition of $\alpha$ in \eqref{eq:alpha_beta_gamma_def}, of $\delta_{T,n}$ in \eqref{def:deltaFD_def}, and of $\tau_{T,n}$ in \eqref{def: tau_st}. Further application of a trace estimate and the stability of $\pi_q^t$ gives
\begin{equation}\label{sigma_term}
 \int_{I_n}\norm{\sqrt{\sigma+|xn_2|} \jump{\pi_q^t(u- \pi_p^su)}}{\gint }^2\ud t\le 
 C \su  \int_{I_n}h_T^{2s} |u|_{H^{s+1}(T)}^2\ud t,
\end{equation}
with $C>0$, depending only on $p$, $s$, $\Omega$, and $C_\rho$. Also,
\[
\int_{I_n} k_n\norm{ \eta_t }{T}^2 	\ud t \le C\int_{I_n}   \Big( k_n^{2r-1}\norm{\partial_t^r u }{T}^2
+k_nh_T^{2(s+1)}|\partial_t u|_{H^{s+1}(T)}^2	\Big)
\ud t
\]
while, from \eqref{eq:space_time_approximation}, we have
\[
\int_{I_n} \!\norm{\eta}{\partial T}^2	\ud t
\le
C
	\int_{I_n}\!
\big( k^{2r}_nh_T^{-1}\norm{ \partial_t^r  u}{T}^2
+ h^{2s+1}_T|u|_{H^{s+1}(T)}^2
\big)\ud t.
\]
The result follows by combining the above bounds, along with \eqref{eq:space_time_approximation new} and \eqref{eq:space_time_approximation new_new},
and the trace estimate
\[
|u(t_m^\pm)|_{H^{s+1}(T)}^2\le C\int_{I_n}\big( k_n^{-1}|u|_{H^{s+1}(T)}^2+ k_n |\partial_tu|_{H^{s+1}(T)}^2\big)\ud t,
\] for $m=1,\dots,N$.
\end{proof}

\begin{remark}
	The use of the $L_2$-projection in time is crucial for the above proof; it  was first used in \cite{chrysafinos_hou} to obtain symmetric  estimates for dG-timestepping.
\end{remark}

\begin{remark}\label{rem:optimalityOfhk}
The fully-discrete error bound \eqref{eqn: error bound in ST norm} is optimal with respect to the spatial mesh size $h$ in the space--time dG norm, as well as with respect to the time-step size $k$ under the condition $h \approx \mathcal{O}(k)$. Indeed, for sufficiently regular solutions, and assuming quasi-uniform spatial and temporal meshes such that $h \approx \mathcal{O}(k)$, the last theorem implies the optimal \emph{a priori} error bound
\begin{equation}\label{error bound Space-Time DG norm}
	\ndg{u-U}_{\rm st}
\leq C \big(h^{p} + k^{q+1/2} \big).
\end{equation}
\end{remark}

\section{Numerical experiments}\label{sec: numerical experiments}

We present some numerical experiments, aiming to asses the convergence rates predicted by the theory.
We measure the error in the space-time dG norm  and we demonstrate the exponential decay of the broken $A$-norm of a problem with no forcing to investigate the validity of \eqref{eq:convergenceToEquilibriumForFullyDiscrete}.

\subsection{Example 1: fully-discrete error}
Let $\Omega = (-1,1)^2$, partitioned by a sequence of successively refined uniform meshes consisting of  16, 64, 256, 1024, and 4096 quadrilateral elements and let $k =h/ \sqrt{2}$; cf., Remark \ref{rem:optimalityOfhk}. We set $t_f =1$ and choose $f$, so that
\begin{equation*}
\begin{split}
u(t,x,y) =& \exp\left(-(x-0.5)^2 - (y-0.5)^2 \right) \sin^2(\pi x) \sin^2(\pi y) + 2.
\end{split}
\end{equation*}
The convergence history of  $\ndg{e}_{\rm st}$ against the meshzise $h$ is given in Figure \ref{fig:instationaryExperiment}. Optimal convergence rate $\mathcal{O}(h^p)$ for $p=1,2,3,4$ is also observed  upon  selecting polynomial order $q = 0,1,2,3$.
\begin{figure}\centering
\includegraphics[width = .5\linewidth]{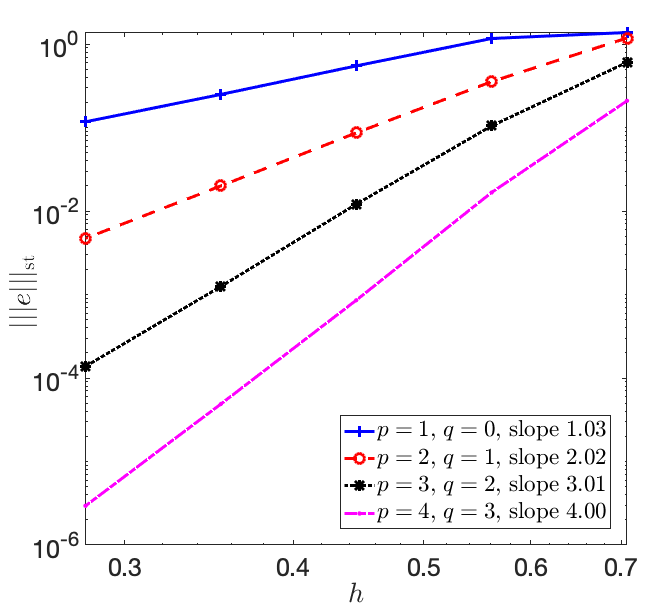}
\caption{Example 1. Convergence history.}\label{fig:instationaryExperiment}
\end{figure}

\subsection{Example 2: exponential convergence to equilibrium}
We now set  $\Omega = (0,1)^2$, $f \equiv 0$ and $t_f = 5$.
Let also $k = h/ \sqrt{2}$.
We consider the low-regularity initial condition
\begin{equation}\label{ex2:step function}
\begin{split}
u_0(x,y) =& 10\sin(\pi x)^2 \sin(\pi y)^2 \Big(\exp\big(-100(x-0.5)^2-100(y-0.5)^2 \big)\\
&+ 2\exp \big(-100(x-0.25)^2-100(y-0.25)^2 \big) + 10\times \chi_{[0.25,0.75]^2}
\Big),
\end{split}
\end{equation}
with $\chi_{[0.25,0.75]^2}$ denoting an indicator function with value one on $[0.25,0.75]^2$ and zero otherwise.
\begin{figure}\centering
\includegraphics[width = .48\linewidth]{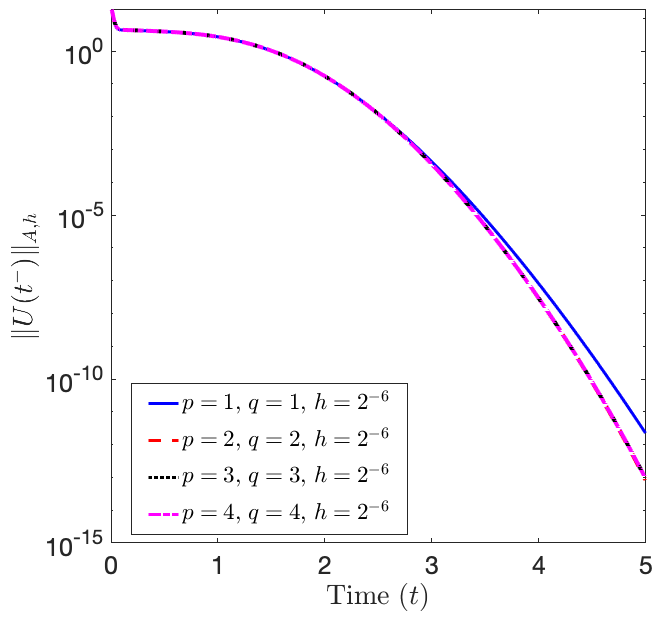}
\includegraphics[width = .48\linewidth]{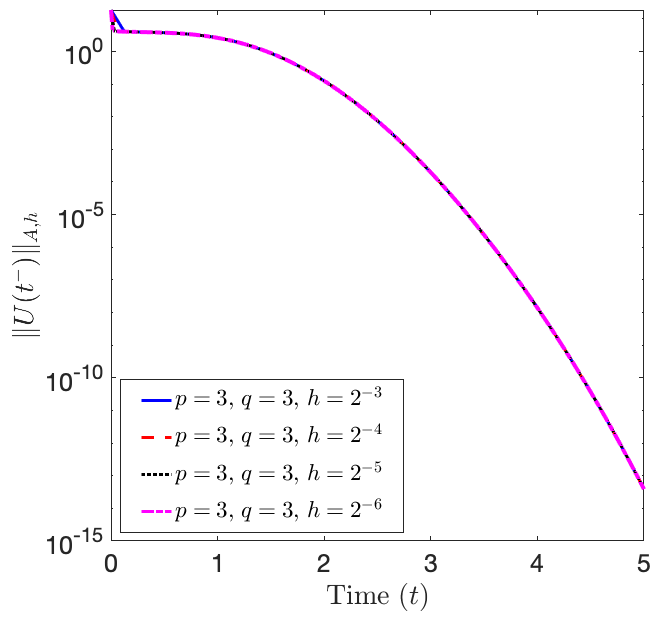}
\caption{Example 2. Convergence to equilibrium for different values of the discretization parameters in semilogy scale with $t_f=5$.}\label{fig:convergenceToEquilibrium}
\end{figure}
\begin{figure}\centering
\includegraphics[width = .48\linewidth]{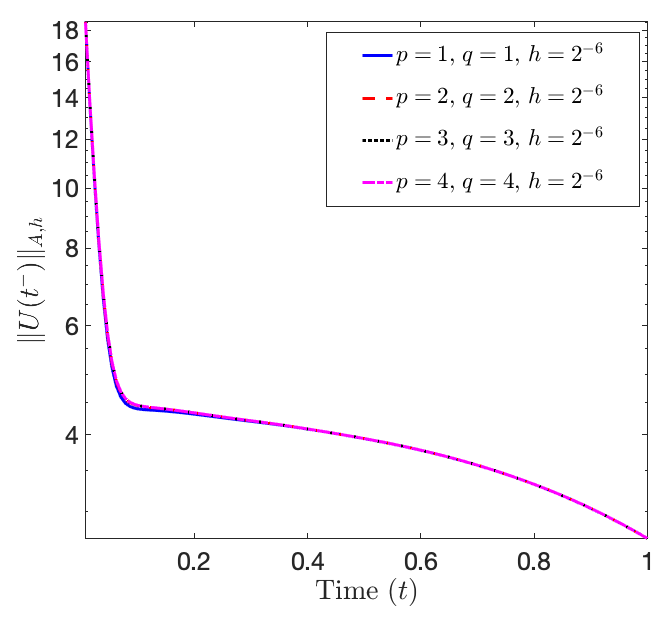}
\caption{Example 2. Convergence to equilibrium for different values of the discretization parameters with $t_f=1$.}\label{fig:convergenceToEquilibrium_short_time}
\end{figure}
In Figure \ref{fig:convergenceToEquilibrium} we provide the values $\norm{U(t_n^-)}{A}$ against $t_n$, for $n=0,\dots, N$.
The exponential convergence observed in this particular example is faster than predicted by \eqref{eq:convergenceToEquilibriumForFullyDiscrete} for $t_f=5$ asymptotically, as it does not appear to be adversely dependent on $h$ and/or $q$.

In Figure \ref{fig:convergenceToEquilibrium_short_time} is a zoom in of Figure \ref{fig:convergenceToEquilibrium} (left panel) for $t\in[0,1]$. It remains an open question whether the degeneration of exponential decay for small $h$ and/or large $p$ can be witnessed in carefully constructed numerical examples and it is a direction of future research.

Finally, we present the numerical solutions computed on 4096 rectangular elements with $p=q=4$ and $k = h/\sqrt{2}$ at the time nodes $t=0$, $t=0.3359$, $t=0.6719$, and $t=1$. In Figure~\ref{fig:numerical lolution at different time steps}, we observe that the numerical solution at $t=0$ exhibits a discontinuity due to the choice of the initial condition $u_0$ in~\eqref{ex2:step function}. At later times, namely $t=0.3359$, $t=0.6719$, and $t=1$, the numerical solution becomes smooth, while its magnitude decreases from approximately $15$ to $6$, which is consistent with the decay observed in Figure~\ref{fig:convergenceToEquilibrium_short_time}.
\begin{figure}\centering
\begin{tabular}{cc}
\includegraphics[width = .48\linewidth]{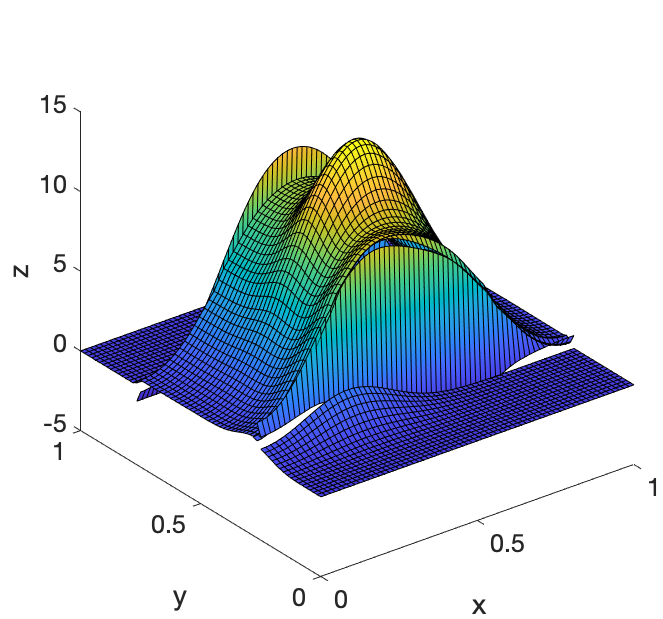}&
\includegraphics[width = .48\linewidth]{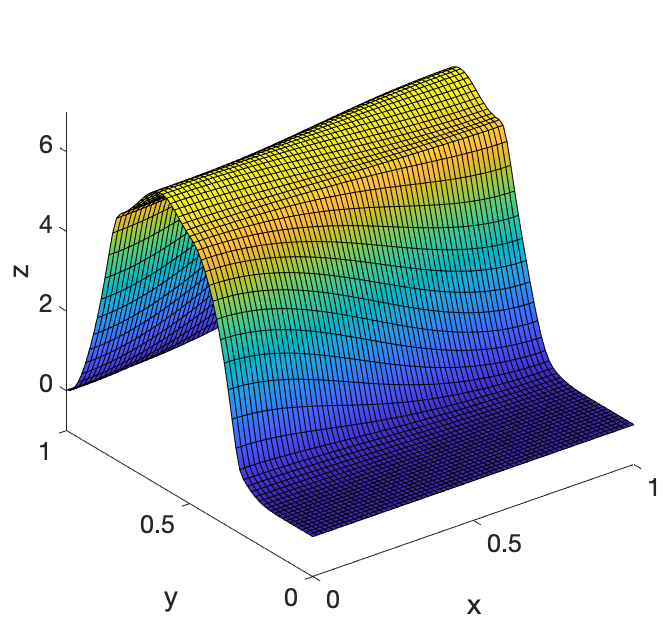}\\
$t=0$ &
$t=0.3359$\\
\includegraphics[width = .48\linewidth]{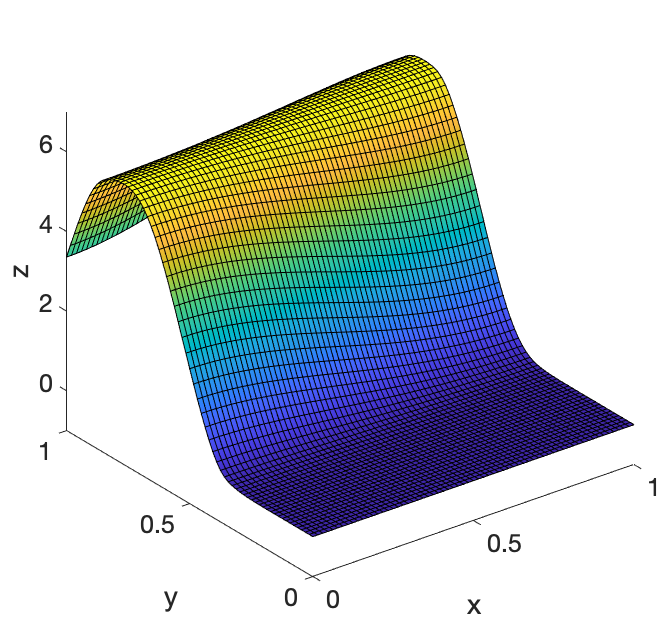}&
\includegraphics[width = .48\linewidth]{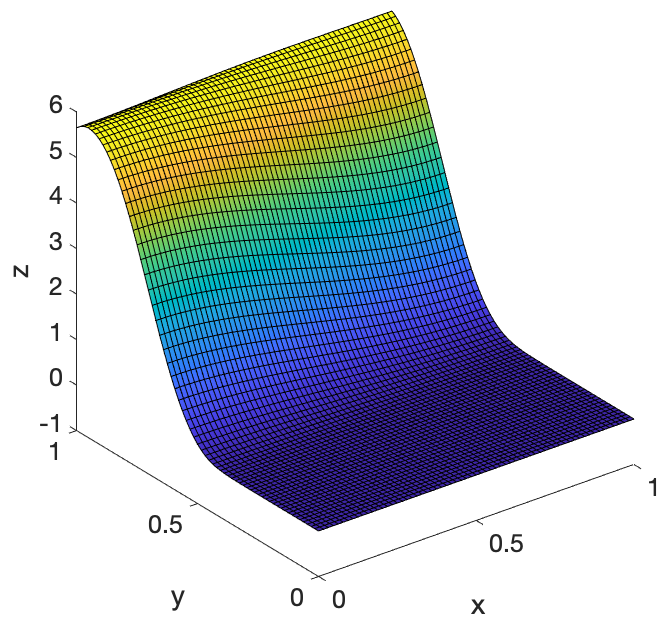}\\
$t=0.6718$ &
$t=1$\\
\end{tabular}
\caption{Example 2. Numerical solution for $4096$ rectangular elements with $p=q=4$.}\label{fig:numerical lolution at different time steps}
\end{figure}

\section{Conclusions and extensions} \label{sec: conclusion and extension}
We have shown that the standard, classical, space-time discontinuous Galerkin method discretising the Kolmogorov equation on a bounded spatial domain admits a subtle, mesh-dependent hypocoercivity property at the discrete level, asymptotically for large times. To our knowledge, this is the first result of this kind for any standard Galerkin scheme. Kolmogorov initial/boundary value problem on a bounded domain with standard no-flux conditions is not expected to admit decaying solutions for \emph{all} initial conditions. The asymptotic numerical hypocoercivity results proven above are consistent with the latter statement, in the sense that the decay as $t\to\infty$ proven degenerates as $h/p,k/q\to 0$. Nevertheless, the stability proven in Theorems \ref{main_SD}  and \ref{thm:discreteEquilibriation} is strictly stronger than the corresponding strongest inf-sup stability result from the literature \cite{johnson_pitkaranta, ayuso_marini,cdgh16} described in Remark \ref{remark_stab}. The key development that allows for the stronger results is to reveal the underlying hypocoercivity structure through the new test functions \eqref{semi: test function} and \eqref{FDV}. Further, we have shown that the space-time discontinuous Galerkin method is inf-sup stable in the same hypocoercivity-enhanced discretisation parameter-dependent family of norms containing a full gradient of the numerical solution. Also, \emph{a priori} error bounds for sufficiently regular exact solutions $u$ are presented in the `enhanced' space-time norm \eqref{def: space-time dG norm}, using the new inf-sup condition.

It is possible, in principle, to extend the results presented to meshes comprising general polygonal elements, with the respective choice of discontinuity penalisation parameter $\sigma$ as in \cite{cdgh16,cdgh_book}, provided a  broken Poincar\'e inequality on such meshes is available; we refer to a recent result in this direction \cite{botti2025sobolev}. We have refrained from doing so in this work, to avoid additional technicalities, in the interest of clarity of the presentation.

Finally, a note regarding the generality of the approach. Although the focus of this work has been on the specific Kolmogorov equation, the methodology used, i.e., to enhance the test functions by a `hypocoercivity-inducing' term mimicking the effect of the modified inner product in \eqref{modified_weak_form}, is general, in principle. As such, we expect to be able to prove similar results for other hypocoercivity-inducing equations, kinetic-type advection fields.

\appendix
\section{An elementary eigenvalue bound}
 \begin{lemma}\label{eigen_est}
 	Consider a real $2\times 2$ symmetric and positive-semidefinite matrix $\scriptsize\mathcal{A}=\begin{pmatrix}
 		a & b \\
 		b & c
 	\end{pmatrix},
 $ whose entries satisfy
$a,c\ge0$. Then, we have $
\frac{ac-b^2}{a+|b|}\le |\mathcal{A}| \le a+|b|\le 2a$, with $|\cdot|$ the spectral matrix norm.
 \end{lemma}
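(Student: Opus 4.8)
The plan is to work directly with the explicit formula for the eigenvalues of a symmetric $2\times 2$ matrix. For $\mathcal{A}=\begin{pmatrix} a & b \\ b & c\end{pmatrix}$ symmetric, the eigenvalues are
\[
\lambda_{\pm}=\frac{a+c}{2}\pm\sqrt{\Big(\frac{a-c}{2}\Big)^2+b^2},
\]
so that $|\mathcal{A}|=\lambda_{+}$ (since $\mathcal{A}$ is positive semidefinite, $\lambda_+\ge 0$ is indeed the spectral norm), $\lambda_++\lambda_-=a+c$ and $\lambda_+\lambda_-=\det\mathcal{A}=ac-b^2$. The three inequalities to prove are then, in order: (i) the lower bound $\tfrac{ac-b^2}{a+|b|}\le\lambda_+$; (ii) the upper bound $\lambda_+\le a+|b|$; (iii) the trivial comparison $a+|b|\le 2a$.

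For (iii): positive semidefiniteness of a symmetric $2\times 2$ matrix forces $b^2\le ac$; combined with $a,c\ge 0$ this is only meaningful when $a>0$, in which case $|b|\le\sqrt{ac}$. Actually the cleanest route is to note that if $a=0$ then positive semidefiniteness forces $b=0$ (the $2\times 2$ principal minor $ac-b^2=-b^2\ge 0$ gives $b=0$), so $a+|b|=0\le 2a=0$ holds; and if $a>0$ we still need $|b|\le a$, which does \emph{not} follow from positive semidefiniteness alone — but in the application $a,b,c$ are $1-\nu\alpha$, $-\nu\beta$, $\gamma-\nu\gamma$ with $|b|\le a$ by the explicit values, so I would state (iii) as an immediate consequence of the already-known sign/size relations among the entries, or simply observe that in the regime used, $|b| \le a$. \emph{(If a self-contained statement is desired, (iii) should be dropped or the hypothesis $|b|\le a$ added.)}

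For (ii): since $c\le a+|b|$ would not be automatic either, the honest approach is $\lambda_+=\tfrac{a+c}{2}+\sqrt{(\tfrac{a-c}{2})^2+b^2}\le \tfrac{a+c}{2}+\tfrac{|a-c|}{2}+|b|=\max\{a,c\}+|b|$, using $\sqrt{s^2+b^2}\le |s|+|b|$. So $\lambda_+\le\max\{a,c\}+|b|$, and again in the application $a\ge c$ (or one simply uses the symmetric statement). For (i): multiply through — it suffices to show $ac-b^2\le (a+|b|)\lambda_+$. Write $ac-b^2=\lambda_+\lambda_-$ and $\lambda_-=a+c-\lambda_+\le a+c$; since $\lambda_-\le a\le a+|b|$ would do it when $\lambda_-\ge 0$ (true by positive semidefiniteness), we get $\lambda_+\lambda_-\le\lambda_+(a+|b|)$ as soon as $\lambda_-\le a+|b|$. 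And $\lambda_-\le\tfrac{a+c}{2}\le\max\{a,c\}\le a+|b|$ in the relevant regime. So all three bounds reduce to elementary manipulations of $\lambda_{\pm}$ together with $\det\mathcal{A}=ac-b^2\ge 0$ and the ordering of $a,c$.

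The only genuine subtlety — and the place I would be careful — is that as literally stated the chain $\tfrac{ac-b^2}{a+|b|}\le|\mathcal{A}|\le a+|b|\le 2a$ needs more than $a,c\ge 0$ and positive semidefiniteness: the last two steps silently use $|b|\le a$ and $c\le a$. These hold in the application (Lemma~\ref{lemma:mesh-dependent spectral gap}) by the explicit formulas for the entries, so in the proof I would either (a) add the standing assumption $c\le a$ and $|b|\le a$ — natural, since $\mathcal{A}=B-\nu A$ there has $(1,1)$-entry dominating — or (b) replace $a$ by $\max\{a,c\}$ throughout and note $|b|\le\sqrt{ac}\le\max\{a,c\}$ from positive semidefiniteness, which makes the statement fully self-contained. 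Either way the arguments above go through verbatim; no hard estimate is involved, and the proof is three lines once the eigenvalue formula is written down.
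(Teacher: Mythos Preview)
Your approach is essentially identical to the paper's: write down the closed-form eigenvalues, bound $\lambda_+$ from above, and use $\lambda_+\lambda_-=\det\mathcal{A}$ to convert this into a lower bound. The paper's proof is equally terse: it writes $2\lambda_{\pm}=(a+c)(1\pm\sqrt{1-4\zeta})$ with $\zeta=(ac-b^2)/(a+c)^2$, asserts $\lambda_+\le a+|b|$, and then reads off $\lambda_-\ge(ac-b^2)/(a+|b|)$ from the determinant identity.

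Your diagnosis of the gap is correct and worth recording. As literally stated, the chain $\lambda_+\le a+|b|\le 2a$ does \emph{not} follow from positive semidefiniteness and $a,c\ge 0$ alone: the example $a=1$, $b=0$, $c=2$ already gives $\lambda_+=2>1=a+|b|$. The paper's proof simply asserts $\lambda_+\le a+|b|$ without comment, so it has the same gap. Your fix (b), replacing $a$ by $\max\{a,c\}$ and noting $|b|\le\sqrt{ac}\le\max\{a,c\}$ from $\det\mathcal{A}\ge 0$, makes the statement self-contained and is the cleanest repair; in the application (Lemma~\ref{lemma:mesh-dependent spectral gap}) the $(1,1)$ entry of $B-\nu A$ does dominate, so either fix works there.

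One small point of alignment with the paper: the paper's proof actually establishes the lower bound on $\lambda_-$, not on $|\mathcal{A}|=\lambda_+$, and it is this stronger statement that Lemma~\ref{lemma:mesh-dependent spectral gap} invokes (``smallest eigenvalue satisfying $\lambda_1^{B-\nu A}\ge\ldots$''). Your argument for (i) gives only $\lambda_+\ge(ac-b^2)/(a+|b|)$, which matches the lemma as written but is weaker than what is used. The paper's route---bound $\lambda_+$ above, then divide $\det\mathcal{A}$ by that bound to get $\lambda_-$ below---gives what the application needs directly; you may want to swap the roles of $\lambda_\pm$ in your step (i).
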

 \begin{proof}
 	The assumptions imply $a\ge 0$ and $ac-b^2\ge 0$. For $\lambda_-\le \lambda_{+}$ the two eigenvalues of $\mathcal{A}$, we have
 	$
 	2\lambda_{\pm}
 	=(a+c )(1 \pm \sqrt{1-4\zeta})
 	$,
 	with $\zeta:=\frac{ac-b^2}{(a+c)^2}$, we get $\lambda_{+} \le a+|b|$.  Since $\lambda_{+}\lambda_-=\det (\mathcal{A})=ac-b^2$, we conclude
 	$
 	\lambda_-\ge \frac{ac-b^2}{a+|b|}.
 	$
 \end{proof}
%    Bibliographies can be prepared with BibTeX using amsplain,
%    amsalpha, or (for "historical" overviews) natbib style.
\bibliographystyle{amsplain}
\bibliography{ref}
%    Insert the bibliography data here.

\end{document}